\documentclass{article}

\usepackage{amssymb}
\usepackage{amsmath}
\usepackage{amsthm, amsfonts, mathrsfs}
\usepackage{xcolor}

\usepackage{enumitem}

\usepackage{float}

\usepackage{cmll}

\usepackage{tikz}

\usetikzlibrary{calc}

\newtheorem{thm}{Theorem}[section]
\newtheorem{cor}[thm]{Corollary}

\newtheorem{prop}[thm]{Proposition}

\newtheorem{lem}[thm]{Lemma}

\newtheorem{qu}{Question}

\newcommand{\comment}[1]{}

\newcommand{\lb}{\left\{}
\newcommand{\rb}{\right\}}
\newcommand{\la}{\left<}
\newcommand{\ra}{\right>}

\newcommand{\tq}{\ge_T}
\newcommand{\te}{=_T}

\newcommand{\gtq}{\ge_{T}}
\newcommand{\gte}{=_{T}}

\newcommand{\K}{\mathcal{K}}

\newcommand{\omom}{\omega^\omega}

\newcommand{\cl}[1]{\overline{#1}}

\DeclareMathOperator{\nci}{\neg c}

\DeclareMathOperator{\ii}{ii}

\DeclareMathOperator{\cof}{cof}

\title{The Cofinality of Generating Families}
\author{Paul Gartside and Thomas Gilton}

\begin{document}

\maketitle
\begin{abstract}
The topology of a separable metrizable space $M$ is \emph{generated} by a family $\mathcal{C}$ of its subsets provided that a set $A\subseteq M$ is closed in $M$ if and only if $A\cap C$ is closed in $C$ for each $C\in \mathcal{C}$. The \emph{sequentiality number}, $\mathop{seq}(M)$, and \emph{$k$-ness number}, $\mathop{k}(M)$,   of $M$, are the minimum size of a generating family of convergent sequences, respectively compact subsets. 

Let $\mathfrak{b}$ be the minimum size of an unbounded set in $\omega^\omega$ with the mod finite order. For a cardinal $\kappa$, the \emph{covering number}, $\mathop{cov}(\kappa)$, is the minimum size of a family of countable subsets of $\kappa$ so that every countable subset of $\kappa$ is contained in an element of the family. It is shown using the Tukey order on relations that (1) $\mathop{seq}(M)=\mathop{cov}(|M|)\cdot \mathfrak{b}$,  unless $M$ is locally small (every point of $M$ has a neighborhood of size strictly less than $|M|$) in which case $\mathop{seq}(M)=\lim_{\mu <|M|} \mathop{cov}(\mu)\cdot \mathfrak{b}$ and 
(2) $k(M)$ is in the interval $[kc(M)\cdot\mathfrak{b},\mathop{cov}(kc(M))\cdot \mathfrak{b}]$, where $kc(M)$ is the minimum number of compact sets that cover $M$. 

Solutions to problems of van Douwen's on the $k$-ness number of analytic and of co-analytic spaces are deduced. 

\smallskip
Keywords: separable metrizable, sequential, $k$-space, cardinal characteristics of the continuum, small cardinals, cardinal invariants,  Tukey order, \textsc{PCF} theory. 

MSC Classification: 03E04, 03E15, 03E17, 06A07, 54A25, 54D45, 54D50,  54D55, 54E35. 
\end{abstract}

\section{Introduction}
For any subset $M$ of the real line its convergent sequences determine which subsets of $M$ are closed,   hence the convergent sequences determine the topology, and hence the closure of any set and which real valued functions on $M$ are continuous. It is natural to ask \emph{how many sequences suffice to generate the topology?}  
More concretely, the topology of a space $M$ (all spaces in this paper are separable and metrizable)   is \emph{generated} by a family $\mathcal{C}$ of its subsets provided that a subset $A$ of $M$ is closed in $M$ if and only if $A\cap C$ is closed in $C$ for each $C\in \mathcal{C}$. Then $\mathop{seq}(M)$, the \emph{sequentiality number} of $M$, is the minimum size of a family of convergent sequences in $M$ generating the topology of $M$. 
It is also natural to define  $\mathop{k}(M)$, the \emph{$k$-ness number} of $M$, as the minimum size of a generating family  of compact sets. In this paper we compute $\mathop{seq}(M)$ and $k(M)$ in terms of intrinsic properties of the separable metrizable space $M$.

Van Douwen introduced the $k$-ness number  
in his survey article, \cite{vanDou}, on cardinal characteristics of the continuum and small cardinals arising in topology. Observe that, for any separable metrizable space  $M$, we have $k(M) \le \mathop{seq}(M) \le \mathfrak{c}$, the continuum ($\mathfrak{c}=|\mathbb{R}|=2^{\aleph_0}$). Van Douwen noted that $k(M) \le \omega$ only when $M$ is locally compact, and he showed that when  $M$ is not locally compact we have $\mathfrak{b} \le k(M)$, where $\mathfrak{b}$ is the \emph{bounding number} (minimum size of an unbounded set in $\omega^\omega$ with the mod finite order, $<^*$). 
Van Douwen computed the $k$-ness number for a handful of spaces, notably the rationals, but left open (see \cite[Question~8.11]{vanDou}) the computation of $k(M)$ for $M$ analytic or co-analytic. We conclude the paper with the solution to van Douwen's question. 

Our results depend on two sources of techniques: first the theory of relations, specifically the Tukey order,   and second Shelah's \textsc{PCF}  theory. 
We now give an overview of how these techniques combine to enable us to compute the sequentiality and $k$-ness numbers. It is convenient to do so initially for $\mathop{seq}(M)$, where our results are essentially complete (up to the limits of our understanding of \textsc{PCF} theory); and then for $k(M)$, where our results loosely mirror those for sequentiality.

From work by the first author and Ziqin Feng in \cite{FG_Shape} we have a detailed understanding, mediated by the Tukey order, of the relations $\mathbf{seq}(M)$ and $\mathbf{k}(M)$,  whose cofinal sets are generating families of convergent sequences and compact subsets, respectively, and hence whose cofinalities (minimum size of a cofinal set) are $\mathop{seq}(M)$ and $k(M)$, respectively. 
 A map $\phi_+$ from one relation, $\mathbf{A}$, to another, $\mathbf{B}$, is a Tukey morphism if it carries cofinal sets to cofinal sets. The Tukey order is given by $\mathbf{A} \tq \mathbf{B}$ if there is a Tukey morphism of the first relation to the second. If $\mathbf{A} \tq \mathbf{B}$ then the cofinality of $\mathbf{A}$ is at least as big as the cofinality of $\mathbf{B}$. Consequently the Tukey order is a convenient tool to compare and compute cofinalities of relations such as $\mathbf{seq}(M)$ and $\mathbf{k}(M)$. 
(See Section~\ref{ssec:prelim_rel} for background material on relations and Tukey morphisms.) 

The outcome for the sequentiality number of a separable metrizable space $M$ is that $\mathop{seq}(M)=\mathop{sam}(|M|)\cdot \mathfrak{b}$,  unless $M$ is locally small (every point of $M$ has a neighborhood of size strictly less than $|M|$) in which case $\mathop{seq}(M)=\lim_{\mu <|M|} \mathop{sam}(\mu)\cdot \mathfrak{b}$. 
Here, for any cardinal $\kappa$, the \emph{sampling number} of $\kappa$ is the least number of countable subsets of $\kappa$ needed to have infinite intersection with every countably-infinite subset of $\kappa$.  Evidently the sampling number is a purely set-theoretic object. It is easily seen that $\mathop{seq}$ is ranked by the size of the space: if $|M| \le |N|$ then $\mathop{seq}(M) \le \mathop{seq}(N)$. Also among all spaces $M$ of a fixed cardinality, $\mathop{seq}(M)$ can only take on at most two values, and can attain more than one only when $|M|$ has countable cofinality. 

To compute the sequentiality number, then, we need to understand the sampling number and its relation to the bounding number, $\mathfrak{b}$, and the continuum, $\mathfrak{c}$ (because separable metrizable spaces have cardinality no more than $\mathfrak{c}$).  
One of the fundamental objects of \textsc{PCF} theory is the \emph{covering number} of a cardinal $\kappa$ which is the minimal size of a family  of countable subsets of $\kappa$ such that every countable subset is contained in a member of the family. 
Clearly the sampling and covering numbers are closely related. In fact, see Theorem~\ref{th:cov=sam}, despite the formal difference between them, they coincide: for every $\kappa$ we have $\mathop{sam}(\kappa)=\mathop{cov}(\kappa)$. 
From \textsc{PCF} theory much is known about the covering number and we recapitulate this next. However 
our results above on the sequentiality number, as well as those below on the $k$-ness number, are most naturally established for the sampling number. 
Since the sampling and covering numbers are equal, we can substitute one for the other at our convenience. 
For consistency we have stated our results below using the covering number, but in proofs often use the sampling number instead without explicitly (and repeatedly) stating the equality.  The concluding paragraph of Section~\ref{ssec:seqsam} gives details on how our understanding of the covering number informs our knowledge of the sequentiality number.

It is known that $\kappa \le \mathop{cov}(\kappa)$, but $\mathop{cov}(\kappa) \ge \kappa^+$ for uncountable $\kappa$ with countable cofinality. Further, it is a basic tenet of \textsc{PCF} theory that the covering number is `tame' - $\mathop{cov}(\kappa)$ does not jump up much above $\kappa^+$. 
Indeed, at least assuming the Continuum Hypothesis, \textsc{CH},  large cardinal strength is required  to get $\mathop{cov}(\kappa) > \kappa^+$. However, it is also known that, assuming the existence of a supercompact cardinal, it is consistent that \textsc{CH} holds and  $\mathop{cov}(\kappa) > \kappa^+$ for $\kappa=\aleph_\omega$. 
Applying random real forcing to the latter model,  allows us to construct, again modulo large cardinals, models where, for example,  $\mathfrak{b}=\aleph_1$ (and so is a non-factor in the possible values of $\mathop{seq}$), $\mathop{cov}(\aleph_\omega)>\aleph_{\omega+1}$ and $\mathfrak{c}$ is as large as we like. Section~\ref{ssec:pcf} provides  details on the above, and concludes with a discussion of the cofinality of products of countable sets of cardinals with the mod finite order, and in particular the existence of mod finite scales. 
This enables the construction of models, modulo large cardinals, in which we can control the covering numbers of families of cardinals.

Finally, in Section~\ref{ssec:ksam} we turn to the question of how many compact subsets suffice to generate the topology of a separable metrizable space. 
Let $kc(M)$, the \emph{compact covering number}, be the minimum number of compact subsets needed to cover a separable metrizable $M$. 
The case of $\sigma$-compact spaces (i.e. $kc(M) \le \omega$) was known to van Douwen, so we move on to  $M$ with $kc(M)$ uncountable.  
For such $M$ it is no surprise that $kc(M) \le k(M)$, but it was a surprise to the authors that $k(M) \le \mathop{cov}(kc(M))\cdot \mathfrak{b}$, and so - as discussed above in  the context of the sequentiality number - the compact covering and $k$-ness numbers are tightly coupled. But less so than the sequentiality number of a space and its cardinality. 
Recall for a fixed $\kappa$, $\mathop{seq}(M)$, for  any $M$ with $|M|=\kappa$, is constrained to at most two values. However we show, modulo large cardinals, for every finite $K\ge 2$  it is consistent that,  for a fixed $\kappa$, $k(M)$, for  any $M$ with $kc(M)=\kappa$, takes on exactly $K$ values, and it is consistent that,  for a fixed $\kappa$, there are infinitely many values of $k(M)$, for   $M$ with $kc(M)=\kappa$. 
Section~\ref{ssec:ksam} concludes with the solution to van Douwen's questions on the $k$-ness numbers of analytic and co-analytic spaces.




\section{Preliminaries}
\subsection{Relations and Tukey Order}\label{ssec:prelim_rel} 
\paragraph{Relations, Cofinality, Morphisms}
A \emph{relation} is a triple $\mathbf{A}=(A_-,A_+,A)$, where $A_-$ is the domain, $A_+$ is the co-domain, and $A$ is a collection of pairs  (a \emph{general relation}). 
Write $xAy$ if $(x,y) \in A$. 
Abbreviate to $(A',A)$  a relation $(A_-,A_+, A)$  where $A_-=A'=A_+$. 
A subset $C$ of $A_+$ is \emph{cofinal} in $\mathbf{A}$ if for every $x$ from $A_-$ there is a $y$ from $C$ such that $xAy$, and  the \emph{cofinality} of $\mathbf{A}$, denoted $\cof(\mathbf{A})$, is the minimum size of a cofinal set. 
A subset $S$ of $A_-$ is \emph{bounded} if for some $y$ in $A_+$, for all $x$ from $S$ we have $xAy$. 
The \emph{additivity} of a relation $\mathbf{A}$, denoted $\mathop{add}(\mathbf{A})$, is the minimum size of an unbounded set.  
Equivalently, it is the cofinality of the dual $\mathbf{A}^\perp=(A_+,A_-,\neg A^{-1})$ of $\mathbf{A}$.

A \emph{Tukey morphism} from $\mathbf{A}$ to $\mathbf{B}=(B_-,B_+,B)$ 
was  originally  defined (see \cite{Vojtas}), to be a pair $( \psi_-,\phi_+)$ of functions such that (i) $\phi_+: A_+ \to B_+$, (ii) $ \psi_-: B_- \to A_-$ and (iii) $\psi_-(b) A a \implies b B \phi_+(a)$, where  $b \in B_- \ \text{and}\ a \in A_+$. 
Note that the map $\phi_+$ of a Tukey morphism pair 
carries cofinal sets to cofinal sets.  
It can be shown conversely that if $\phi_+:A_+ \to B_+$  carries cofinal sets to cofinal sets then there exists a $\psi_-$ so that $(\psi_-,\phi_+)$ is a Tukey morphism. Consequently we also call a function $\phi_+:A_+ \to B_+$  mapping cofinal sets to cofinal sets a Tukey morphism. 
If such a morphism exists we write $\mathbf{A} \gtq \mathbf{B}$.  
The relation $\gtq$, the \emph{Tukey order} on relations,  is transitive, hence $\gte$, defined by $\mathbf{A} \gte \mathbf{B}$ if and only if $\mathbf{A} \gtq \mathbf{B}$ and $\mathbf{B} \gtq \mathbf{A}$, is an equivalence relation,  \emph{Tukey equivalence} on relations, whose equivalence classes are called \emph{Tukey types}.

Suppose $(\psi_-,\phi_+)$ is a Tukey morphism pair from $\mathbf{A}$ to $\mathbf{B}$. 
Then $\phi_+$ maps cofinal sets to cofinal sets, and so $\cof(\mathbf{A}) \ge \cof(\mathbf{B})$. 
One can check that $(\phi_+,\psi_-)$ is a Tukey morphism pair from $\mathbf{B}^\perp$ to $\mathbf{A}^\perp$, and so $\mathop{add}(\mathbf{A}) \le \mathop{add}(\mathbf{B})$.

For any set $S$, equality on $S$ gives a relation, $(S,S,=)=(S,=)$. Note $(S,=) \te (S',=)$ if and only if $|S|=|S'|$. 
If $\mathcal{S}$ is a family of sets then $(S,\mathcal{S},\in)$ is a relation whose cofinal sets are the covers of $S$ from $\mathcal{S}$. 
On $\omega^\omega$ let $<^*$ be the mod finite order, so $f<^*g$ if $f(n)<g(n)$ except for finitely many $n$. Then the \emph{dominating number}, $\mathfrak{d}$, is $\cof(\omega^\omega,<^*)$; while the \emph{bounding number}, $\mathfrak{b}$, is the additivity of $(\omega^\omega,<^*)$. 

\paragraph{Operations on Relations}  
Given relations $\mathbf{A}$ and $\mathbf{B}$ define 
$\mathbf{A} \times \mathbf{B}$ to be the standard product, $(A_-\times B_-,A_+\times B_+,A\times B)$, and 
$\mathbf{A} + \mathbf{B}$,  to be $(A_- \oplus B_-,A_+ \oplus B_+,S)$ where $x S y$ if and only if either $x \in A_-$, $y \in A_+$ and $xAy$ or $x\in B_-$, $y\in B_+$ and $xBy$. 
More generally, if $\{\mathbf{A}_\lambda=((A_\lambda)_-,(A_\lambda)_+,A_\lambda) : \lambda \in \Lambda\}$ is a family of relations then its sum is $\sum_{\lambda \in \Lambda} \mathbf{A}_\lambda = (\bigoplus_{\lambda \in \Lambda} (A_\lambda)_-,\bigoplus_{\lambda \in \Lambda} (A_\lambda)_+, S)$ where $x S y$ if and only if for some $\lambda$, $x\in (A_\lambda)_-, y \in (A_\lambda)_+$ and $xA_\lambda y$.
We need the following facts about the sum operation:
\begin{enumerate}[noitemsep,topsep=1pt]
    \item[(A)] If for all $\lambda\in\Lambda$, $\mathbf{A}_\lambda = \mathbf{A}$, then 
    $
    \sum_{\lambda \in \Lambda} \mathbf{A}_\lambda \gte \mathbf{A} \times (|\Lambda|,=)$. 
    \item[(B)]  If  $\Lambda'$ is a subset of $\Lambda$ then 
    $
    \sum_{\lambda \in \Lambda} \mathbf{A}_\lambda \gtq \sum_{\lambda \in \Lambda'} \mathbf{A}_\lambda$. 
    \item[(C)]  If for every $\lambda$ in $\Lambda$,  $\mathbf{A}_\lambda \gtq \mathbf{B}_\lambda$, then 
    $
    \sum_{\lambda \in \Lambda} \mathbf{A}_\lambda \gtq \sum_{\lambda \in \Lambda} \mathbf{B}_\lambda$. 
\end{enumerate}

\paragraph{Some Special Relations} 
Let $(S_n)_n$ be a sequence of sets. Let $\left(\prod_n [S_n]^{<\omega}\right)_\infty$ be all $(F_n)_n$ in the product such that $F_n\ne \emptyset$ for infinitely many $n$.
Define a general relation $i_\infty$ on sequences of sets by $(F_n)_n i_\infty (G_n)_n$ if and only if for infinitely many $n$ we have $F_n \cap G_n \ne \emptyset$. 
When all the $S_n$ are equal, say to $S$, then we write $\left([S]^{<\omega}\right)^{\omega}_\infty$ for $\left(\prod_n [S_n]^{<\omega}\right)_\infty$. 
For a cardinal $\kappa$, we now define the three relations $P(\kappa)$, $Q(\kappa)$, and $S(\kappa)$. Let $P(\kappa)=(\left([\kappa]^{<\omega}\right)^{\omega}_\infty,i_\infty)$. 
If $\cof(\kappa) \ne \omega$ then set $Q(\kappa)$ and $S(\kappa)$ to be the trivial relation. Otherwise, when $\cof(\kappa)=\omega$, fix a strictly  increasing sequence of infinite cardinals, $(\kappa_n)_n$ with limit $\kappa$, and set $Q(\kappa)=(\left(\prod_n [\kappa_n]^{<\omega}\right)_\infty,i_\infty)$ and  $S(\kappa)=\sum_n P(\kappa_n) \times (\kappa_n,=)$. 
From \cite[Lemma~2.14]{FG_Shape} we know $Q(\kappa)$ and $S(\kappa)$ are well-defined in the case when $\kappa$ has countable cofinality, because their  Tukey type is independent of the choice of cofinal sequence, $(\kappa_n)_n$.
We know \cite[Lemma~2.10]{FG_Shape} that $P(\omega) \te (\omom,\le_\infty)$, where $f \le_\infty g$ if $\{n \in \omega : f(n) \le g(n)\}$ is infinite, and hence $\cof(P(\omega))=\mathfrak{b}$. We end this subsection with the following simple lemma
\begin{lem}\label{lem:cofatleastkappa}
    $\cof(P(\kappa))$ is at least $\kappa$. 
\end{lem}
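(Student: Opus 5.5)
We need to show that any cofinal family $\mathcal{C}$ in $P(\kappa)=(([\kappa]^{<\omega})^\omega_\infty, i_\infty)$ has size at least $\kappa$. The plan is a direct counting argument: if $\mathcal{C}$ is small, then the union of everything it mentions is a small subset of $\kappa$, and a sequence built from outside that union cannot be hit.

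Suppose $\mathcal{C}$ is cofinal and $|\mathcal{C}|<\kappa$. If $\kappa$ is finite the statement is trivial, and for $\kappa=\omega$ it follows from $\cof(P(\omega))=\mathfrak{b}\ge\omega$. So we may assume $\kappa$ is infinite. Each element $(G_n)_n\in\mathcal{C}$ contributes the set $\bigcup_n G_n$, which is countable since it is a countable union of finite sets. Let $U=\bigcup_{(G_n)_n\in\mathcal{C}}\bigcup_n G_n$, so $|U|\le|\mathcal{C}|\cdot\omega$.

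We would now like $U\ne\kappa$. When $\kappa$ is uncountable and $|\mathcal{C}|<\kappa$, we get $|U|\le\max(|\mathcal{C}|,\omega)<\kappa$, so some $\alpha\in\kappa\setminus U$ exists. Then define $F_n=\{\alpha\}$ for all $n$. This $(F_n)_n$ lies in $([\kappa]^{<\omega})^\omega_\infty$, yet $F_n\cap G_n=\emptyset$ for every $n$ and every $(G_n)_n\in\mathcal{C}$. So no element of $\mathcal{C}$ is $i_\infty$-above it, contradicting cofinality.

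The only delicate point is the countable case, where $|U|$ could equal $\kappa$. There we fall back on the earlier cited fact $P(\omega)\te(\omom,\le_\infty)$ and $\cof(P(\omega))=\mathfrak{b}\ge\omega_1>\omega$. Alternatively, one can diagonalize directly: enumerate a countable $\mathcal{C}=\{(G^k_n)_n:k\in\omega\}$ and choose $F_n=\{m_n\}$ with $m_n\notin\bigcup_{k\le n}G^k_n$. Then $F_n\cap G^k_n=\emptyset$ for all $n\ge k$, so each $(G^k_n)_n$ meets $(F_n)_n$ at only finitely many $n$. Hence the cofinality is uncountable, and in particular at least $\omega$.
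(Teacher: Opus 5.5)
Your argument for uncountable $\kappa$ is the paper's own: a family of size $<\kappa$ mentions fewer than $\kappa$ ordinals, and for $\alpha$ outside that union the constant sequence $(\{\alpha\})_n$ is not $i_\infty$-related to any member of the family. Your separate handling of $\kappa=\omega$, via $\cof(P(\omega))=\mathfrak{b}$ or your correct diagonalization, is a genuine improvement, because the paper's step ``$S$ has size strictly less than $\kappa$'' can fail there: even a single sequence can mention every natural number.

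One correction: the finite case is not trivial but false. For $2\le n<\omega$, the constant sequence whose every term is $n=\{0,\dots,n-1\}$ is by itself cofinal in $P(n)$, so $\cof(P(n))=1<n$. You should simply restrict to infinite $\kappa$, as the paper implicitly does.
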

    \begin{proof} Suppose that $\mathcal{C}' \subseteq ([\kappa]^{<\omega})^{\omega}_{\infty}$ has size strictly less than $\kappa$. Then the set $S=\bigcup \{ \bigcup_n F_n : (F_n)_n \in \mathcal{C}'\}$ has size strictly less than $\kappa$, so pick $\alpha \in \kappa \setminus S$. 
Now we see that $(\{\alpha\})_n$ witnesses that $\mathcal{C}'$ is not cofinal.
\end{proof}

\subsection{\textsc{PCF} Theory}\label{ssec:pcf} 

Two elements of  Shelah's \textsc{PCF} theory 
play a key role in our results, namely the covering number and mod finite scales.

\paragraph{The Covering Number} 
Relations of the form $([\mu]^{\delta},\subseteq)$, and their cofinalities, where $\mu$ is singular and  $\delta\leq\mu$, are central objects of \textsc{PCF} theory. For the purposes of this paper, the case $\delta=\omega$ is the most important, and, simplifying terminology and notation, we define the \emph{covering number}, $\mathop{cov}(\kappa)$, of a cardinal $\kappa$, to be $\cof([\kappa]^\omega,\subseteq)$. Basic facts about the covering number are recorded below, see \cite{ShelahBook}.

\begin{lem}\label{lem:samIncreasing}
    If $\kappa\geq\lambda$, then $\mathop{cov}(\kappa)\geq\mathop{cov}(\lambda)$.
\end{lem}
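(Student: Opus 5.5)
The plan is to exhibit a Tukey morphism from $([\kappa]^\omega,\subseteq)$ to $([\lambda]^\omega,\subseteq)$. This gives $([\kappa]^\omega,\subseteq)\gtq([\lambda]^\omega,\subseteq)$, and since Tukey reductions do not increase cofinality (Section~\ref{ssec:prelim_rel}) we get $\mathop{cov}(\kappa)=\cof([\kappa]^\omega,\subseteq)\ge\cof([\lambda]^\omega,\subseteq)=\mathop{cov}(\lambda)$.

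Since $\lambda\le\kappa$ we may regard $\lambda\subseteq\kappa$ as ordinals. We first dispose of the degenerate case: if $\lambda$ is finite then $[\lambda]^\omega$ is empty, $\mathop{cov}(\lambda)=0$, and there is nothing to prove. So assume $\lambda\ge\omega$, and fix a countably infinite subset $E\subseteq\lambda$, for instance $E=\omega$.

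Define $\phi_+:[\kappa]^\omega\to[\lambda]^\omega$ by $\phi_+(A)=(A\cap\lambda)\cup E$. This is countable and infinite, so it lies in $[\lambda]^\omega$; adjoining $E$ is needed only because $A\cap\lambda$ could be finite. Take $\psi_-$ to be the inclusion $[\lambda]^\omega\to[\kappa]^\omega$. The Tukey condition then reads: if $B\in[\lambda]^\omega$ and $B\subseteq A$, then $B\subseteq A\cap\lambda\subseteq\phi_+(A)$, which is immediate. Equivalently, the image under $\phi_+$ of a cofinal family in $([\kappa]^\omega,\subseteq)$ is cofinal in $([\lambda]^\omega,\subseteq)$.

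There is no real obstacle. The only point needing care is that $\phi_+(A)$ must be infinite to lie in $[\lambda]^\omega$, and adjoining the fixed set $E$ handles this.
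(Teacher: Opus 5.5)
Your proof is correct. With $\psi_-$ the inclusion and $\phi_+(A)=(A\cap\lambda)\cup E$, the Tukey condition is immediate: $B\subseteq A$ with $B\subseteq\lambda$ gives $B\subseteq\phi_+(A)$. Padding by $E$ is exactly what keeps $\phi_+(A)$ inside $[\lambda]^\omega$, and the finite-$\lambda$ case is handled properly.

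The paper itself gives no proof of this lemma. It records it as a basic fact with a pointer to \cite{ShelahBook}, so there is no competing route to compare against. Your argument supplies the standard direct verification, phrased in the same Tukey-morphism style the paper uses for Lemma~\ref{l:suitable} and Theorem~\ref{th:cov=sam}. It also proves slightly more than the lemma states, namely the reduction $([\kappa]^\omega,\subseteq)\gtq([\lambda]^\omega,\subseteq)$ and not just the inequality of cofinalities.
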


\begin{lem} \label{l:sam1} 
Let $\kappa$ be an uncountable cardinal. Then
\begin{enumerate}[noitemsep,topsep=1pt]
\item[(i)]  $\kappa \le \mathop{cov}(\kappa) \le \kappa^\omega$, and $\mathop{cov}(\kappa^\omega)=\kappa^\omega$, 

\item[(ii)] if $\cof(\kappa)=\omega$ then $\mathop{cov}(\kappa) > \kappa$,

\item[(iii)] if $\kappa=\mu^+$ then     $\mathop{cov}(\kappa) = \mathop{cov}(\mu)\cdot\kappa$,  and

\item[(iv)] if $\kappa$ is a limit and $\cof(\kappa)>\omega$  then $\mathop{cov}(\kappa)=\lim \{\mathop{cov}(\mu) : \mu<\kappa\}$. 
\end{enumerate}
\end{lem}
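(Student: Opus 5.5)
We prove the four parts of Lemma~\ref{l:sam1} separately. Parts (i), (iii) and (iv) are direct cofinality arguments in $([\kappa]^\omega,\subseteq)$. Part (ii) is a diagonalisation.

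\textbf{Part (i).} For the lower bound $\kappa\le\mathop{cov}(\kappa)$, note that a cofinal family $\mathcal{C}$ covers every singleton. Hence $\kappa\subseteq\bigcup\mathcal{C}$, so $\kappa\le|\mathcal{C}|\cdot\omega$. Since $\kappa$ is uncountable, this gives $|\mathcal{C}|\ge\kappa$. The upper bound holds because $[\kappa]^\omega$ is itself cofinal and has size $\kappa^\omega$. Finally, $\mathop{cov}(\kappa^\omega)\le(\kappa^\omega)^\omega=\kappa^\omega$, and the lower bound just proved gives equality.

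\textbf{Part (iii).} Let $\kappa=\mu^+$. For each $\alpha<\kappa$ we have $|\alpha|\le\mu$, so we fix a bijection of $\alpha$ onto a cardinal $\le\mu$. Pulling back a cofinal family of $[\mu]^\omega$ through this bijection gives a cofinal family $\mathcal{C}_\alpha$ in $[\alpha]^\omega$ of size $\le\mathop{cov}(\mu)$; by Lemma~\ref{lem:samIncreasing} this works even when $|\alpha|<\mu$. Since $\cof(\kappa)>\omega$, every countable $A\subseteq\kappa$ is bounded, so $A\subseteq\alpha$ for some $\alpha<\kappa$. Thus $\bigcup_\alpha\mathcal{C}_\alpha$ is cofinal and has size $\le\kappa\cdot\mathop{cov}(\mu)$. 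The reverse inequality comes from (i) together with Lemma~\ref{lem:samIncreasing}.

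\textbf{Part (iv).} The same argument applies with $\alpha$ ranging over $\kappa$, using $\cof(\kappa)>\omega$. This gives $\mathop{cov}(\kappa)\le\kappa\cdot\sup_{\mu<\kappa}\mathop{cov}(\mu)$. Since $\mathop{cov}(\mu)\ge\mu$ for all $\mu<\kappa$, this supremum is already at least $\kappa$, so $\mathop{cov}(\kappa)\le\sup_{\mu<\kappa}\mathop{cov}(\mu)$. Monotonicity (Lemma~\ref{lem:samIncreasing}) gives the other direction.

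\textbf{Part (ii).} This is the only part with content. Suppose $\cof(\kappa)=\omega$ and let $\{C_\alpha:\alpha<\kappa\}$ be any family of countable subsets of $\kappa$; we show it is not cofinal. Fix cardinals $\kappa_n\nearrow\kappa$ and split the family into the blocks $\mathcal{B}_n=\{C_\alpha:\alpha<\kappa_n\}$. The set $\bigcup\mathcal{B}_n$ has size $\le\kappa_n<\kappa$, so we can choose $\beta_n\in\kappa\setminus\bigcup_{\alpha<\kappa_n}C_\alpha$. Put $A=\{\beta_n:n\in\omega\}$. For any $\alpha<\kappa$ there is $n$ with $\alpha<\kappa_n$. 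By the choice of $\beta_n$ we have $\beta_n\notin C_\alpha$, hence $A\not\subseteq C_\alpha$. The set $A$ is countable, and if it is finite we pad it with extra points to make it countably infinite. So the family is not cofinal, and $\mathop{cov}(\kappa)>\kappa$.

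Part (ii) is the König-style diagonalisation, but it is routine once the blocks $\mathcal{B}_n$ are set up, so I expect no real obstacle in this lemma.
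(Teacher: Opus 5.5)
Your proof is correct. The paper gives no argument for this lemma: it records (i)--(iv) as basic facts about the covering number and refers to Shelah's book. So your proposal supplies a self-contained ZFC proof where the paper relies on the literature.

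Your route is the natural elementary one:
\begin{itemize}
\item In (i) you count $\bigcup\mathcal{C}$.
\item In (iii) and (iv) you use $\cof(\kappa)>\omega$ to trap each countable set below some $\alpha<\kappa$, and pull back cofinal families from $[|\alpha|]^\omega$.
\item In (ii) you run a K\"onig-style diagonalisation over the blocks $\{C_\alpha:\alpha<\kappa_n\}$.
\end{itemize}
This shows that none of the four facts needs any \textsc{PCF} machinery. The genuinely \textsc{PCF}-theoretic input in the paper concerns how far $\mathop{cov}(\kappa)$ can rise above $\kappa$ or $\kappa^+$ (e.g.\ Proposition~\ref{pr:fixpt} and the lower bounds via scales). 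Your argument neither touches that nor needs to. Your use of Lemma~\ref{lem:samIncreasing} is not circular, since monotonicity follows directly by intersecting a cofinal family for $[\kappa]^\omega$ with $\lambda$.

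A few small points to tidy:
\begin{itemize}
\item In (iv), $\mathop{cov}(\mu)\ge\mu$ holds only for uncountable $\mu$ (indeed $\mathop{cov}(\omega)=1$). The uncountable cardinals are cofinal in the uncountable limit $\kappa$, so the supremum is still at least $\kappa$.
\item In (iii) and (iv), the pulled-back sets may be finite and should be padded so they lie in $[\alpha]^\omega$; take $\alpha$ infinite.
\item In (i), singletons are not themselves in $[\kappa]^\omega$. Each lies in some countably infinite set, which is all your counting argument needs.
\end{itemize}
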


\begin{lem}\label{l:sam_aleph_n}
    For every $n \in \mathbb{N}$ we have $\mathop{cov}(\aleph_n)=\aleph_n$.
\end{lem}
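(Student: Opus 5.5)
The plan is to argue by induction on $n$, using Lemma~\ref{l:sam1} for the successor step; the only real content is the base case $\mathop{cov}(\aleph_1)=\aleph_1$. The case $n=0$ (reading $\aleph_0=\omega$) is trivial, since $[\omega]^\omega$ has the single element $\omega$ as a cofinal set, so $\mathop{cov}(\aleph_0)=1\le\aleph_0$. For $n\ge 1$, Lemma~\ref{l:sam1}(i) already gives $\mathop{cov}(\aleph_n)\ge\aleph_n$, so only the upper bound $\mathop{cov}(\aleph_n)\le\aleph_n$ is needed.

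\textbf{Base case, $n=1$.} Take the family $\{\alpha : \omega\le\alpha<\omega_1\}$ of countable infinite ordinals, viewed as elements of $[\omega_1]^\omega$. Any countable $A\subseteq\omega_1$ is bounded in $\omega_1$ because $\omega_1$ is regular. Hence $A\subseteq\alpha$ for some countable ordinal $\alpha\ge\omega$. This family has size $\aleph_1$, so $\mathop{cov}(\aleph_1)\le\aleph_1$.

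\textbf{Inductive step.} Suppose $\mathop{cov}(\aleph_n)=\aleph_n$ with $n\ge 1$. Since $\aleph_{n+1}=\aleph_n^+$, Lemma~\ref{l:sam1}(iii) gives
\[
\mathop{cov}(\aleph_{n+1})=\mathop{cov}(\aleph_n)\cdot\aleph_{n+1}=\aleph_n\cdot\aleph_{n+1}=\aleph_{n+1}.
\]

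If one prefers not to rely on (iii), the same step can be done directly. For each $\gamma$ with $\aleph_n\le\gamma<\aleph_{n+1}$, fix a bijection $e_\gamma:\aleph_n\to\gamma$, and let $\mathcal{C}_n$ be a cofinal family in $[\aleph_n]^\omega$ of size $\aleph_n$. Put
\[
\mathcal{C}=\{e_\gamma[C] : \gamma<\aleph_{n+1},\ C\in\mathcal{C}_n\}.
\]
This family has size $\aleph_{n+1}$. It is cofinal: a countable $A\subseteq\aleph_{n+1}$ is bounded by regularity, say $A\subseteq\gamma$. Then $e_\gamma^{-1}[A]$ lies inside some $C\in\mathcal{C}_n$, and so $A\subseteq e_\gamma[C]$.

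No step is a genuine obstacle. The one point needing care is that the argument uses the regularity of successor cardinals, which rests on the axiom of choice, and this is exactly why it stops working at $\aleph_\omega$.
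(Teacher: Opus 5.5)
Your argument is correct for $n\ge 1$ and is the standard one: the paper gives no proof, recording this as a basic fact with a citation to Shelah, and it follows from Lemma~\ref{l:sam1}(i) and (iii) by induction exactly as you do (indeed (iii) with $\mu=\aleph_0$ already gives your base case, since $\mathop{cov}(\aleph_0)=1$). One correction: your ``trivial'' case $n=0$ does not give the stated equality, because $\mathop{cov}(\aleph_0)=1\neq\aleph_0$, so the lemma must be read with $\mathbb{N}=\{1,2,\dots\}$, which matches the paper's use of it only for uncountable $\aleph_n$.
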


In several cases, \textsc{PCF} theory places strong upper bounds on the possible values of covering numbers.
The following proposition, which is surely folklore, gives an example.

\begin{prop}\label{pr:fixpt}
    Let $\lambda$ be the least fixed point of the $\aleph$-function. 
    Let $\kappa<\lambda$ be infinite, and write $\kappa=\aleph_\alpha$. 
    Then $\mathop{cov}(\kappa)=\cof([\kappa]^\omega,\subseteq)<\aleph_{|\alpha|^{+4}}$.
\end{prop}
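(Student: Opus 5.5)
The plan is an induction on $\alpha$ driven by Lemma~\ref{l:sam1}, with Shelah's bound $\cof([\aleph_\delta]^{\omega},\subseteq)<\aleph_{|\delta|^{+4}}$ at singular $\aleph_\delta$ of countable cofinality as the one external input.

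First, two facts about $\kappa<\lambda$. Since $\lambda$ is the least fixed point of the $\aleph$-function, every infinite $\kappa=\aleph_\alpha<\lambda$ satisfies $\alpha<\aleph_\alpha$. Also, for $\alpha$ infinite, $\aleph_{|\alpha|^{+4}}$ is a limit cardinal with $\cof(\aleph_{|\alpha|^{+4}})=|\alpha|^{+4}$, which is uncountable and regular. The target bound only grows with $|\alpha|$.

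For $\alpha$ finite the claim is immediate: Lemma~\ref{l:sam_aleph_n} gives $\mathop{cov}(\aleph_n)=\aleph_n<\aleph_{\omega}$, and $\omega$ is the relevant index since $|\alpha|^{+4}$ is at least $\omega$ under any reading for finite $\alpha$. The case $\kappa=\omega$ is trivial since $\mathop{cov}(\omega)=1$.

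For infinite $\alpha$ the induction splits into three cases.

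\emph{Successor case.} If $\alpha=\beta+1$, then Lemma~\ref{l:sam1}(iii) gives $\mathop{cov}(\aleph_\alpha)=\mathop{cov}(\aleph_\beta)\cdot\aleph_\alpha$. We have $\aleph_\alpha<\aleph_{|\alpha|^{+4}}$ because $\alpha<|\alpha|^{+4}$. By induction $\mathop{cov}(\aleph_\beta)<\aleph_{|\beta|^{+4}}\le\aleph_{|\alpha|^{+4}}$.

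\emph{Limit of uncountable cofinality.} If $\alpha$ is a limit with $\cof(\alpha)>\omega$, then Lemma~\ref{l:sam1}(iv) gives $\mathop{cov}(\aleph_\alpha)=\sup_{\beta<\alpha}\mathop{cov}(\aleph_\beta)$. Each term is below $\aleph_{|\alpha|^{+4}}$ by induction. There are only $|\alpha|$ terms and $\cof(\aleph_{|\alpha|^{+4}})=|\alpha|^{+4}>|\alpha|$, so the supremum stays strictly below.

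\emph{Limit of countable cofinality.} This is the main obstacle, since Lemma~\ref{l:sam1} gives only the lower bound $\mathop{cov}(\kappa)>\kappa$, and genuine PCF theory is needed. Here $\kappa=\aleph_\alpha$ is singular of cofinality $\omega$, and the fixed-point hypothesis gives $\alpha<\aleph_\alpha$. I would quote Shelah's theorem (see \cite{ShelahBook}): for $\aleph_\delta$ singular with $\delta<\aleph_\delta$, $\cof([\aleph_\delta]^{\cof(\aleph_\delta)},\subseteq)<\aleph_{|\delta|^{+4}}$, the covering form of the $\aleph_{\omega_4}$ bound.

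If only the version for $[\aleph_\delta]^{\cof(\aleph_\delta)}$ is available, with countable cofinality it is literally $\cof([\kappa]^\omega,\subseteq)$. If the quoted form is instead $\mathrm{pp}(\aleph_\delta)<\aleph_{|\delta|^{+4}}$, I would combine it with the standard identity $\cof([\aleph_\delta]^\omega,\subseteq)=\mathrm{pp}(\aleph_\delta)$ for $\delta<\aleph_\delta$. I would then check that $\mathrm{pp}$ computed at the smaller singulars below $\kappa$ is handled by the induction.

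The remaining care is verifying that the hypothesis $\alpha<\aleph_\alpha$ needed in Shelah's theorem holds at every stage. This is exactly where $\kappa<\lambda$ is used.
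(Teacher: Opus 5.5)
\paragraph{Verdict.} There is a genuine gap. Your successor case and your treatment of limits of uncountable cofinality via Lemma~\ref{l:sam1}(iv) are correct; the latter is more elementary than the paper, which sends every limit through pcf. The gap is the case $\cof(\kappa)=\omega$, which is where all the content of the proposition lies.

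\paragraph{The citation is not a theorem.} The statement you attribute to Shelah is that $\cof([\aleph_\delta]^{\cof(\aleph_\delta)},\subseteq)<\aleph_{|\delta|^{+4}}$ whenever $\aleph_\delta$ is singular and $\delta<\aleph_\delta$. This is consistently false.
\begin{itemize}
\item In Gitik's model behind Corollary~\ref{cor:noBounds}, let $\theta$ be the first fixed point with $\mathop{cov}(\theta)>\aleph_{\theta^{+4}}$, and put $\mu=\aleph_{\theta+\omega}$. Then $\delta=\theta+\omega<\mu$ and $|\delta|=\theta$, yet $\mathop{cov}(\mu)\ge\mathop{cov}(\theta)>\aleph_{|\delta|^{+4}}$.
\item The same example refutes the ``standard identity'' $\cof([\aleph_\delta]^\omega,\subseteq)=\operatorname{pp}(\aleph_\delta)$, since $\operatorname{pp}(\aleph_{\theta+\omega})\le\operatorname{maxpcf}\{\aleph_{\theta+n}:n\ge 1\}<\aleph_{\theta+\omega_4}$.
\end{itemize}
From $\delta<\aleph_\delta$, Shelah's theory only bounds the part of the covering problem that lives at $\aleph_\delta$ itself. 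That means $\operatorname{pp}(\aleph_\delta)$, or the number of sets of size $<\aleph_\delta$ needed to cover all countable subsets, or $\cof([\aleph_\delta]^{|\delta|},\subseteq)$. It says nothing about countable subsets of smaller cardinals, and that is where a fixed point below $\kappa$ would do damage.

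For $\aleph_\omega$ the distinction is invisible because $\mathop{cov}(\aleph_n)=\aleph_n$. That is presumably why the $\aleph_{\omega_4}$ bound looked like exactly the covering statement you need. If you restrict the citation to $\kappa<\lambda$, where it is true, it is just the proposition itself, so quoting it is circular. Your closing remark about handling smaller singulars by induction points the right way, but that is precisely the step that must be carried out.

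\paragraph{The missing step.} You need to split off the contribution of smaller cardinals and control it by the induction hypothesis. The paper writes
\[
\cof([\kappa]^\omega,\subseteq)\le\cof([\kappa]^{|\alpha|},\subseteq)\cdot\cof([|\alpha|]^\omega,\subseteq).
\]
\begin{itemize}
\item First factor: because $\alpha<\aleph_\alpha$, the set $A=(|\alpha|,\kappa)\cap\textsc{REG}$ is a progressive interval of size $|\alpha|$, so the first factor equals $\operatorname{maxpcf}(A)$. Since $\operatorname{pcf}(A)$ is an interval of regular cardinals starting at $|\alpha|^+$ and of size $<|\alpha|^{+4}$, we get $\operatorname{maxpcf}(A)<\aleph_{|\alpha|^{+4}}$.
\item Second factor: it is $<\aleph_{|\alpha|^{+4}}$ by the induction hypothesis at $|\alpha|<\kappa$. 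This is where you genuinely need every cardinal $\le\kappa$, and not just $\kappa$, to be a non-fixed point.
\end{itemize}

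Equivalently, in your own framework, you may bound $\cof([\kappa]^\omega,\subseteq)$ by the product of two terms:
\begin{itemize}
\item the number of sets of size $<\kappa$ needed to cover all countable subsets of $\kappa$, which is at most $\cof([\kappa]^{|\alpha|},\subseteq)$ and so is handled by Shelah;
\item $\sup_{\nu<\kappa}\mathop{cov}(\nu)$, which is a countable supremum of values each below $\aleph_{|\alpha|^{+4}}$ by induction.
\end{itemize}
Either way, that second term has to appear and be bounded explicitly.
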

\begin{proof}
    We will need several facts, mostly from \textsc{PCF} theory, which we list here and which can be found in a standard \textsc{PCF} reference, such as \cite{AbrahamMagidor}:
    \begin{enumerate}[nosep,topsep=1pt]
        \item[(1)] 
        $\cof([\mu]^{\nu_1},\subseteq)\leq\cof([\mu]^{\nu_2},\subseteq)\cdot\cof([\nu_2]^{\nu_1},\subseteq)$, when
        $\mu$ is infinite and $\nu_1\leq\nu_2$. 
        \item[(2)] Suppose that $\mu$ is a singular cardinal and $\kappa<\mu$ is any infinite cardinal so that the interval $A:=(\kappa,\mu)\cap\textsc{REG}$ of regular cardinals has size $\leq\kappa$.
        Then 
        $\cof([\mu]^{\kappa},\subseteq)=\operatorname{maxpcf}(A)$.
        \item[(3)] Suppose that $A$ is a progressive interval of regular cardinals.
        Then $\operatorname{pcf}(A)$ is an interval of regular cardinals.
        \item[(4)] If $A$ is a progressive interval of regular cardinals, then 
        $|\operatorname{pcf}(A)|<|A|^{+4}$.        
    \end{enumerate}

We prove the proposition by induction on $\kappa\geq\omega$. The result is trivial for $\kappa=\omega$, since the relevant cofinality is simply equal to 1.
We next take care of the successor case. 
Suppose that $\kappa=\aleph_{\alpha+1}$, for some $\alpha<\lambda$ and that the result is known for $\aleph_\alpha$.
Then
$
\cof([\aleph_{\alpha+1}]^\omega,\subseteq)=\cof([\aleph_\alpha]^\omega,\subseteq)\cdot\aleph_{\alpha+1}
$.
By induction, $\cof([\aleph_\alpha]^\omega,\subseteq)<\aleph_{|\alpha|^{+4}}$.
We also trivially have $\alpha+1<|\alpha|^{+4}$.
Thus both terms on the right-hand side are below $\aleph_{|\alpha|^{+4}}\leq\aleph_{|\alpha+1|^{+4}}$, which completes the proof in the successor case.

We now address the limit case.
Suppose that $\kappa=\aleph_\alpha$ where $\alpha$ is a limit.
To structure the following argument, we begin with an application of fact (1) above, namely that 
$(\ast)\ \cof([\kappa]^\omega,\subseteq)\leq\cof([\kappa]^{|\alpha|},\subseteq)\cdot\cof([|\alpha|]^\omega,\subseteq)
$.
Our argument will proceed by bounding both of the terms on the right-hand side of $(\ast)$.

To begin, note that $\alpha<\aleph_\alpha$, since $\kappa<\lambda$ and $\lambda$ is the least fixed point of the $\aleph$-function.
Since $\alpha$ is a limit, $|\alpha|$ is infinite.
Also, the interval $A:=(|\alpha|,\kappa)\cap\textsc{REG}$ of regular cardinals has size $|\alpha|$: indeed, as $\kappa=\aleph_\alpha$, the function taking each $\nu\in A$ to its index $\beta$ in the $\aleph$-sequence is an injection of $A$ into $\alpha$.
By fact (2) above, we conclude that $\cof([\kappa]^{|\alpha|},\subseteq)=\operatorname{maxpcf}(A)$.
As $A$ is a progressive (since $|A|=|\alpha|<\min(A)$) interval of regular cardinals, we conclude from facts (3) and (4) that $\operatorname{pcf}(A)$ is an interval of regular cardinals of size $<|A|^{+4}=|\alpha|^{+4}$.

We now claim that $\operatorname{maxpcf}(A)<\aleph_{|\alpha|^{+4}}$.
Suppose otherwise, for the sake of a contradiction.
Certainly $\operatorname{pcf}(A)$ contains $\min(A)$ and $\operatorname{maxpcf}(A)$.
Since $\operatorname{pcf}(A)$ is an interval of regular cardinals, it therefore contains the interval $[\min(A),\operatorname{maxpcf}(A)]\cap\text{REG}$.
Since $\aleph_{|\alpha|^{+4}}<\operatorname{maxpcf}(A)$, $\operatorname{pcf}(A)$ therefore contains the interval $[\min(A),\aleph_{|\alpha|^{+4}})\cap\text{REG}$ of regular cardinals.
As $\min(A)=|\alpha|^+<\aleph_{|\alpha|^{+4}}$, we conclude that $\operatorname{pcf}(A)$ has at least $|\alpha|^{+4}$-many elements, contradicting the fact that $|\operatorname{pcf}(A)|<|\alpha|^{+4}$.

Bound the first term on the right-hand side of $(\ast)$, by stringing together the previous results to see 
$\cof([\kappa]^{|\alpha|},\subseteq)=\operatorname{maxpcf}(A)<\aleph_{|\alpha|^{+4}}$.

To bound the second term, we use the induction hypothesis.
Write $|\alpha|=\aleph_\beta$ for some $\beta<|\alpha|$. 
By induction $\cof([|\alpha|]^\omega,\subseteq)<\aleph_{|\beta|^{+4}}$.
Since $\beta<|\alpha|$ (because $|\alpha|$ is not a fixed point of the $\aleph$-function), we know that $|\beta|^{+4}<|\alpha|^{+4}$.
Thus $\cof([|\alpha|]^\omega,\subseteq)<\aleph_{|\alpha|^{+4}}$.

We  conclude that both terms on the right-hand side of $(\ast)$ are $<\aleph_{|\alpha|^{+4}}$, and this completes the induction.
\end{proof}

Here we note that the situation is more complex at fixed points of the $\aleph$-function. Assuming the consistency of certain large cardinal assumptions, $\cof([\kappa]^\omega,\subseteq)$ can be arbitrarily large when $\kappa$ is the first fixed point of the $\aleph$-function; this result follows almost immediately from the following theorem, due to Gitik:

\begin{thm} (Gitik, \cite{Gitik:noBounds}) Let $\kappa$ be a cardinal of cofinality $\omega$ with certain large cardinals below $\kappa$.\footnote{Namely, for each $\tau<\kappa$, the set $\lb\alpha<\kappa:o(\alpha)\geq\alpha^{+\tau}\rb$ is unbounded in $\kappa$; note that $o(\alpha)$ is the Mitchell order of a cardinal $\alpha$.} Let $\lambda>\kappa$ be any cardinal. Then there is a forcing extension in which $\kappa$ is the first fixed point of the $\aleph$-function, $2^\kappa\geq\lambda$, and the \textsc{GCH} holds below $\kappa$.
\end{thm}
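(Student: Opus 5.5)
This theorem is quoted from \cite{Gitik:noBounds} and is not proved in the paper, so the following is only a sketch of how I would reconstruct Gitik's argument. I would not try to make it self-contained. The plan is a Prikry-type forcing built from extenders, in the style of Gitik--Magidor extender-based forcing, interleaved with Levy collapses. It should do three things at once: blow up $2^\kappa$ to at least $\lambda$, arrange that $\kappa$ becomes $\aleph_\kappa$, and ensure that no cardinal below $\kappa$ is a fixed point of the $\aleph$-function.

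\textbf{Step 1 (preparing the ground).} Since $\cof(\kappa)=\omega$ already holds, I would fix an increasing sequence $(\kappa_n)_n$ cofinal in $\kappa$ and increasing ordinals $\tau_n<\kappa$, cofinal in $\kappa$. I would choose them so that $o(\kappa_n)\ge\kappa_n^{+\tau_n}$, which the footnote hypothesis permits. I would also preliminarily force the \textsc{GCH} below $\kappa$ by a reverse Easton iteration, which preserves the relevant Mitchell orders. From these orders I would extract, for each $n$, a coherent sequence of $(\kappa_n,\kappa_n^{+\tau_n})$-extenders. These are the \emph{short} extenders that will be used to add many $\omega$-sequences through $\prod_n\kappa_n^{+\tau_n}$. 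Because $\cof(\kappa)=\omega$ is already present, no Prikry sequence needs to be added to $\kappa$ itself.

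\textbf{Step 2 (the main forcing).} I would force with Gitik's ``short extenders with gaps'' forcing. It adds $\lambda$ many new $\omega$-sequences whose $n$-th coordinates lie below $\kappa_n^{+\tau_n}$, so $2^\kappa\ge\lambda$ in the extension. Between consecutive relevant cardinals I would interleave Levy collapses. These are chosen so that in the extension each $\kappa_n$ becomes $\aleph_{\rho_n}$ for a suitable $\rho_n$, with $|\rho_n|<\kappa_n$ and $\rho_n\to\kappa$. It follows that $\kappa=\aleph_\kappa$, while every cardinal $\mu<\kappa$ satisfies $\mu=\aleph_\beta$ for some $\beta$ with $|\beta|$ well below $\mu$, so $\mu$ is not a fixed point. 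The \textsc{GCH} below $\kappa$ survives because the collapses and the Prikry-type parts add no new bounded subsets beyond what is collapsed, by the Prikry property.

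\textbf{Step 3 (main obstacle).} The hard part is preserving cardinals, in particular $\kappa^+$, and obtaining a chain condition ($\kappa^{++}$-c.c.) despite the large gap between $\kappa$ and $2^\kappa\ge\lambda$. Naive extender-based forcing only yields gaps of size $<\kappa^{+\omega_1}$-type or similar bounds. Gitik's device is to use the Mitchell order: extenders of lower order serve as ``preparation'' for the higher ones, and this permits an equivalence relation on conditions with few classes, giving the chain condition. I would first try to verify the Prikry property and the $\kappa^{++}$-c.c. by this $\Delta$-system argument over the $\kappa^{++}$-many components. This is where the full Mitchell-order hypothesis, unbounded for every $\tau<\kappa$, is consumed, and I expect it to be the genuinely difficult, technical core of \cite{Gitik:noBounds}.
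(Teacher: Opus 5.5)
The paper does not prove this statement. It is quoted from \cite{Gitik:noBounds} and used only as a black box for Corollary~\ref{cor:noBounds}. The load-bearing part of your proposal, deferring the technical core to Gitik, is therefore exactly the paper's treatment. Read as a reconstruction, though, your outline leaves out the one idea that makes the theorem possible.

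The gap is in Step~2. The claim that the forcing ``adds $\lambda$ many new $\omega$-sequences'' is simply asserted, and nothing says how the $\tau_n$, or the fact that they tend to $\kappa$, enter into producing them. In your account $\kappa=\aleph_\kappa$ is a by-product of the collapses, and the full Mitchell-order hypothesis is spent only on the chain condition in Step~3. That division of labour cannot be right. Run the same outline with bounded $\tau_n$, say $\tau_n=n+2$, and collapses arranged so that $\kappa$ becomes $\aleph_\omega$. Nothing in Step~2 would change, so you would get \textsc{GCH} below $\aleph_\omega$ together with $2^{\aleph_\omega}\ge\lambda$ for arbitrary $\lambda$. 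But then $2^{\aleph_\omega}=\aleph_\omega^{\omega}=\mathop{cov}(\aleph_\omega)\cdot 2^{\aleph_0}<\aleph_{\omega_4}$ by Proposition~\ref{pr:fixpt}. So the possibility of an arbitrary $\lambda$ is tied to $\kappa$ becoming a fixed point, here via extenders over $\kappa_n$ of lengths $\kappa_n^{+\tau_n}$ with $\tau_n\to\kappa$, even though no extender has length $\ge\kappa$. Showing how this creates room for $\lambda$ many new $\omega$-sequences of ordinals below $\kappa$, while preserving cardinals and \textsc{GCH} below $\kappa$, is the content of \cite{Gitik:noBounds}. Your outline does not touch it.

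Relatedly, your contrast with ``naive extender-based forcing'' is off. Long-extender-based Prikry forcing imposes no bound on the gap. However, it needs a cardinal carrying an extender of length $\lambda$, which the hypothesis does not supply. It also adds no bounded subsets, so it leaves $\kappa$ a limit of fixed points. The bounded gaps you have in mind belong to short-extender forcings with collapses that make $\kappa$ small, e.g.\ $\aleph_\omega$, where Proposition~\ref{pr:fixpt} shows some bound is unavoidable.

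Your claim that every $\mu<\kappa$ ends up a non-fixed point is likewise asserted rather than arranged. Each $\kappa_n$ is measurable in $V$, hence a fixed point and a limit of fixed points. All of these must be destroyed, while the blocks of cardinals carrying the new sequences stay free of fixed points.
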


\begin{cor} (Gitik, \cite{Gitik:noBounds})\label{cor:noBounds}
    Assuming the existence of certain large cardinals, $\mathop{cov}(\kappa)$ can be arbitrarily large, where $\kappa$ is the first fixed point of the $\aleph$-function, and \textsc{CH} holds.
\end{cor}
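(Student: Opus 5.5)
The plan is to derive the corollary from Gitik's theorem together with Lemma~\ref{l:sam1}. Fix an arbitrary target cardinal $\theta$; we want a model in which $\kappa$ is the first fixed point of the $\aleph$-function, \textsc{CH} holds, and $\mathop{cov}(\kappa)\ge\theta$. Start with the large cardinal hypothesis of Gitik's theorem at some $\kappa$ of cofinality $\omega$. Apply the theorem with $\lambda>\max(\kappa,\theta)$. This gives an extension in which three things hold: $\kappa$ is the first fixed point of the $\aleph$-function, \textsc{GCH} holds below $\kappa$ (so in particular $2^{\aleph_0}=\aleph_1$, which is \textsc{CH}), and $2^\kappa\ge\lambda$. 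Cofinality $\omega$ of $\kappa$ is automatic there, since the first $\aleph$-fixed point is the limit of $\omega,\aleph_\omega,\aleph_{\aleph_\omega},\dots$.

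The key step is to convert the failure of \textsc{SCH}-type bounds, $2^\kappa\ge\lambda$, into a lower bound on $\mathop{cov}(\kappa)$. We use the standard computation
\[
\kappa^\omega\le \mathop{cov}(\kappa)\cdot \sup_{X\in[\kappa]^\omega}|[X]^\omega| = \mathop{cov}(\kappa)\cdot 2^{\aleph_0}.
\]
This holds because every countable subset of $\kappa$ lies inside some member of a cofinal family $\mathcal{C}\subseteq[\kappa]^\omega$, so $[\kappa]^\omega=\bigcup_{C\in\mathcal{C}}[C]^\omega$. Under \textsc{CH} this gives $\kappa^\omega\le\mathop{cov}(\kappa)\cdot\aleph_1=\mathop{cov}(\kappa)$, and with Lemma~\ref{l:sam1}(i) we get $\mathop{cov}(\kappa)=\kappa^\omega$.

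It remains to show $\kappa^\omega\ge 2^\kappa$, and this is the only mildly delicate step. Since $\cof(\kappa)=\omega$ and \textsc{GCH} holds below $\kappa$, write $\kappa=\sup_n\kappa_n$ with $\kappa_n<\kappa$. A subset of $\kappa$ is determined by its traces on the $\kappa_n$, so
\[
2^\kappa\le\prod_n 2^{\kappa_n}=\prod_n\kappa_n^+\le\kappa^\omega.
\]
Hence $\mathop{cov}(\kappa)=\kappa^\omega=2^\kappa\ge\lambda>\theta$. As $\theta$ was arbitrary, $\mathop{cov}(\kappa)$ can be made arbitrarily large while \textsc{CH} holds and $\kappa$ is the first $\aleph$-fixed point.

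The main obstacle is not in this deduction, which is elementary cardinal arithmetic, but in Gitik's forcing theorem itself; that is taken as given.
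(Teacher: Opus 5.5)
Your proposal is correct and follows the paper's proof: in Gitik's model you use \textsc{CH} to get $\mathop{cov}(\kappa)=\kappa^\omega$, and \textsc{GCH} below $\kappa$ to get $\kappa^\omega=2^\kappa\ge\lambda$. The only difference is that you spell out the two cardinal-arithmetic steps, namely $\kappa^\omega\le\mathop{cov}(\kappa)\cdot 2^{\aleph_0}$ and $2^\kappa\le\prod_n\kappa_n^+\le\kappa^\omega$, which the paper asserts without detail.
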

\begin{proof}
    Work in Gitik's model from above where $\kappa$ is the first fixed point of the $\aleph$-function, \textsc{GCH} holds below $\kappa$, and $2^\kappa\geq\lambda$ (where $\lambda>\kappa$ is arbitrary). 
    We first note that $\kappa^\omega=\cof([\kappa]^\omega,\subseteq)\cdot2^{\aleph_0}=\cof([\kappa]^\omega,\subseteq)$, using the fact that $2^{\aleph_0}=\aleph_1$ in the model under consideration. Next observe that $\kappa^\omega=2^\kappa$, using the \textsc{GCH} below $\kappa$. Putting these together, we conclude that $\cof([\kappa]^\omega,\subseteq)=2^\kappa\geq\lambda$.
\end{proof}

Optimal upper bounds follow from `no large cardinal' hypotheses. 
Indeed, by work of Gitik \cite{Gitik:Strength}, we know that if there does not exist a cardinal $\kappa$ with Mitchell order $\kappa^{++}$ (written $o(\kappa)=\kappa^{++})$, and if $\mu$ is a singular cardinal of cofinality $\omega$, \emph{and} if $\mathfrak{c}=2^\omega<\mu$ (note the cardinal arithmetic assumption here), then $\mu^\omega=\mu^+$.  
For such a $\mu$, we conclude that $\mu^+\leq\mathop{cov}(\mu)\leq\mu^+=\mu^+$, giving us the value of $\mu^+$ for $\mathop{cov}(\mu)$.
By induction, we obtain the following theorem:

\begin{thm}[Gitik]\label{thm:boringcov}
    If there is no $\lambda$ with Mitchell order $\lambda^{++}$, and if $\mathfrak{c}<\aleph_\omega$, then for every uncountable cardinal $\kappa$,
    \[ \mathop{cov}(\kappa) = 
    \begin{cases}
        \kappa^+ & \text{ if }\cof(\kappa)=\omega \\
        \kappa & \text{ otherwise.}
    \end{cases}\]
\end{thm}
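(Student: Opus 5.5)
The proof will be by transfinite induction on the uncountable cardinal $\kappa$, following the case split of Lemma~\ref{l:sam1}. We use Gitik's result quoted above throughout: under the anti-large-cardinal hypothesis (no $\lambda$ with $o(\lambda)=\lambda^{++}$), every singular $\mu$ of countable cofinality with $\mathfrak{c}<\mu$ satisfies $\mu^\omega=\mu^+$.

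The base cases are the $\aleph_n$ with $n\ge 1$. Here Lemma~\ref{l:sam_aleph_n} gives $\mathop{cov}(\aleph_n)=\aleph_n$ directly, and $\cof(\aleph_n)\neq\omega$, so the formula holds.

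For $\kappa>\aleph_\omega$ we proceed by cases.

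\emph{Successor case.} Suppose $\kappa=\mu^+$. By Lemma~\ref{l:sam1}(iii), $\mathop{cov}(\kappa)=\mathop{cov}(\mu)\cdot\kappa$. By the induction hypothesis (or the base case), $\mathop{cov}(\mu)\le\mu^+=\kappa$. Hence $\mathop{cov}(\kappa)=\kappa$, which is what we want, since $\kappa$ is regular and so $\cof(\kappa)\ne\omega$.

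\emph{Limit case, uncountable cofinality.} Suppose $\kappa$ is a limit with $\cof(\kappa)>\omega$. By Lemma~\ref{l:sam1}(iv), $\mathop{cov}(\kappa)$ is the limit of $\mathop{cov}(\mu)$ for $\mu<\kappa$. Each such $\mathop{cov}(\mu)$ is at most $\mu^+<\kappa$, so the limit is at most $\kappa$. Combined with $\kappa\le\mathop{cov}(\kappa)$ from Lemma~\ref{l:sam1}(i), this gives equality.

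\emph{Limit case, countable cofinality.} Suppose $\cof(\kappa)=\omega$. Since $\kappa\ge\aleph_\omega>\mathfrak{c}$, Gitik's result gives $\kappa^\omega=\kappa^+$. Lemma~\ref{l:sam1}(i) gives $\mathop{cov}(\kappa)\le\kappa^\omega=\kappa^+$, and Lemma~\ref{l:sam1}(ii) gives $\mathop{cov}(\kappa)>\kappa$. So $\mathop{cov}(\kappa)=\kappa^+$.

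The case $\kappa=\aleph_\omega$ itself is this same countable-cofinality case, needing only $\mathfrak{c}<\aleph_\omega$.

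The only delicate point is the range of validity of Gitik's result. It requires $\mathfrak{c}<\mu$, and the hypothesis $\mathfrak{c}<\aleph_\omega$ is exactly what makes this hold for every singular $\mu$ of countable cofinality, since every such $\mu$ is at least $\aleph_\omega$. All other steps are routine applications of Lemma~\ref{l:sam1}. The induction is needed only to bound $\mathop{cov}(\mu)\le\mu^+$ for smaller $\mu$ in the successor and uncountable-cofinality limit cases.
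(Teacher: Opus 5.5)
Your proof is correct and is essentially the paper's argument: the paper gets $\mathop{cov}(\mu)=\mu^+$ at singular $\mu$ of countable cofinality from Gitik's $\mu^\omega=\mu^+$ (valid since $\mathfrak{c}<\aleph_\omega\le\mu$) together with Lemma~\ref{l:sam1}(i),(ii). It then simply says the theorem follows ``by induction'', and your successor and uncountable-cofinality cases via Lemma~\ref{l:sam1}(iii),(iv) carry out exactly that induction.
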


In the case that there is no $\kappa$ with $o(\kappa)=\kappa^{++}$, but $\mathfrak{c}>\aleph_\omega$, we have the same behavior for the $\mathop{cov}$ function starting at least as low as the least singular above $2^\omega$.

In the context of computing the sequentiality and $k$-ness numbers, our interest is in $\mathop{cov}(\kappa)$ where $\kappa$ (the size of a separable metrizable space) is no more than $\mathfrak{c}$. 
Unfortunately, then, the preceding theorem does not apply. 
\begin{qu}\label{qu:large_card}\hfill
\begin{enumerate}[noitemsep,topsep=1pt]
\item If there is no $\lambda$ with Mitchell order $\lambda^{++}$, then is $\mathop{cov}(\aleph_\omega)=\aleph_{\omega+1}$? 
\item Next we have a version of the previous item, but for other cardinals than $\aleph_\omega$ specifically: suppose that there is no $\lambda$ with Mitchell order $\lambda^{++}$, and that $\mathfrak{c}>>\aleph_{\omega}$. Is it true that for every $\omega<\kappa<\mathfrak{c}$ of countable cofinality, $\mathop{cov}(\kappa)=\kappa^+$? 
\end{enumerate}
\end{qu}

In personal communication, told to us via Cummings, Gitik thinks (\cite{Gitik:personal}) that the answer to Question A, part 2, is positive, i.e., that the \textsc{CH} is not necessary.

Assuming the existence of sufficiently large cardinals, we can construct models in which the covering number exceeds this optimal upper bound, for example at $\aleph_\omega$.
Indeed, Magidor \cite{Magidor} showed that if there is a supercompact cardinal then it is consistent that \textsc{CH} holds and $\mathop{cov}(\aleph_\omega)>\aleph_{\omega}^+$. 
Gitik 
\cite{Gitik:NegSCH} was able to lower the large cardinal strength, which combined with his result above 
gives the exact consistency strength of the failure of $\mathsf{SCH}$.

\paragraph{Controlling $\mathfrak{b}$ and $\mathfrak{c}$}
The models above where $\mathop{cov}(\kappa) > \kappa^+$ is not directly relevant for our purposes (computing $\mathop{seq}(M)$ and $k(M)$, for $M$ separable metrizable) because \textsc{CH} holds, whereas we need the continuum large while also controlling $\mathfrak{b}$. 
To correct this we modify  via forcing, intending to keep $\mathfrak{b}$ and the relevant covering numbers unchanged, while boosting the continuum.

As is well known c.c.c. forcings do not change the covering number. 
\begin{lem}\label{lem:cccPreservesCapturing}
    Let $\kappa$ be an uncountable cardinal.
        Let $\mu=\mathop{cov}(\kappa)$ and let $\mathbb{P}$ be c.c.c. Let $\dot{X}$ be the $\mathbb{P}$-name for the set of countable subsets of $\kappa$ in the extension. Then $\mathbb{P}$ forces that $\mu=\cof(\dot{X},\subseteq)$.
\end{lem}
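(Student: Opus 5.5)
The claim has two inequalities: in $V^{\mathbb{P}}$, the family $\mathcal{C}=([\kappa]^\omega)^V$ stays cofinal in $(\dot X,\subseteq)$, and no family of size $<\mu$ in the extension is cofinal. We fix in $V$ a cofinal family $\mathcal{C}\subseteq([\kappa]^\omega,\subseteq)$ of size $\mu$ and prove the two directions separately, using only the countable chain condition.

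\emph{Upper bound.} Let $G$ be generic and $x\in V[G]$ a countable subset of $\kappa$. Choose a name $\dot x$ and a condition $p$ forcing that $\dot x$ is enumerated as $\{\dot\alpha_n:n\in\omega\}$ with each $\dot\alpha_n<\check\kappa$. For each $n$, the set $B_n=\{\beta<\kappa : \exists q\le p,\ q\Vdash \dot\alpha_n=\check\beta\}$ lies in $V$. Choosing an antichain of conditions deciding $\dot\alpha_n$ that is maximal below $p$, the c.c.c.\ shows that $B_n$ is countable. Hence $B=\bigcup_n B_n$ is a countable set in $V$, and $p\Vdash\dot x\subseteq\check B$. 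Since $\mathcal{C}$ is cofinal in $V$, some $C\in\mathcal{C}$ contains $B$. A density argument then shows that $\mathcal{C}$ is cofinal in $V[G]$, so $\cof(\dot X,\subseteq)\le\mu$. Note that $\mathcal{C}\subseteq \dot X$ because countable sets in $V$ remain countable in the extension, as c.c.c.\ forcing preserves $\omega_1$.

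\emph{Lower bound.} Suppose that in $V[G]$ there is a cofinal family $\{y_\xi:\xi<\nu\}$ with $\nu<\mu$. We may assume $\nu$ is a cardinal of $V$, since c.c.c.\ forcing preserves cardinals. For each $\xi$, apply the same covering argument as above, but now over a maximal antichain deciding the name rather than below a single condition. This gives in $V$ a countable set $B_\xi\supseteq y_\xi$, and the map $\xi\mapsto B_\xi$ can be defined in $V$ from the names. The family $\{B_\xi:\xi<\nu\}\in V$ has size at most $\nu<\mu$. Every countable $a\subseteq\kappa$ in $V$ is covered by some $y_\xi$ in $V[G]$, hence by $B_\xi$, and the relation $a\subseteq B_\xi$ is absolute. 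So $\{B_\xi\}$ is cofinal in $V$, contradicting $\mu=\mathop{cov}(\kappa)$.

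\emph{Main obstacle.} The delicate point is the lower bound: the sequence of names $\langle\dot y_\xi\rangle$ must be chosen in $V$ and forced to enumerate a cofinal family, and the $B_\xi$ must be computed in $V$ uniformly over all conditions. This works because, by the c.c.c., each name for an ordinal has only countably many possible values.
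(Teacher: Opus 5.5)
Your proof is correct. The paper gives no proof of this lemma; it is introduced only with ``As is well known''. What you give is the standard argument: by the c.c.c., each name for an ordinal below $\kappa$ has only countably many possible values, so every countable subset of $\kappa$ in $V[G]$ is covered by a countable set lying in $V$. This yields both inequalities. The ground-model cofinal family stays cofinal, so $\cof^{V[G]}\le\mu$. Conversely, a cofinal family of size $\nu<\mu$ in $V[G]$ pulls back to a family $\{B_\xi:\xi<\nu\}$ in $V$ that is cofinal there, which gives $\cof^{V[G]}\ge\mu$.

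Two small points of presentation:
\begin{enumerate}
\item In the upper bound no density argument is needed. Since $p\in G$ and $p\Vdash\dot x\subseteq\check B$, you get $x\subseteq B\subseteq C$ directly.
\item In the lower bound it is cleanest to work below a single $p\in G$ that forces $\langle\dot y_\xi:\xi<\check\nu\rangle$ to enumerate a cofinal family, as your final paragraph indicates. You should also enlarge any finite $B_\xi$ to a countably infinite set, so that it lies in $[\kappa]^\omega$.
\end{enumerate}
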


In particular, we can use random real forcing to push the continuum high, while keeping the bounding number low, and fixing the covering number. 
\begin{thm}\label{th:small_b_big_sam}
    If there is a supercompact cardinal then it is consistent that $\mathfrak{b}=\mathfrak{d}=\aleph_1$, $\mathfrak{c}$ as large as we like, and $\mathop{cov}(\aleph_\omega)>\aleph_{\omega+1}$.
\end{thm}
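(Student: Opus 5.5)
We start from Magidor's model: assuming a supercompact cardinal, fix a ground model $V$ in which \textsc{CH} holds and $\mathop{cov}(\aleph_\omega)>\aleph_{\omega+1}$. Fix a target cardinal $\theta$ with $\cof(\theta)>\omega$, chosen as large as desired. Let $\mathbb{B}$ be the measure algebra adding $\theta$ random reals, and let $G$ be $\mathbb{B}$-generic. We will show that $V[G]$ satisfies all three requirements.

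\emph{Step 1: preservation of cardinals and of the covering number.} Since $\mathbb{B}$ is c.c.c., it preserves all cardinals and cofinalities. Hence $\aleph_\omega$ and $\aleph_{\omega+1}$ mean the same thing in $V$ and in $V[G]$. Apply Lemma~\ref{lem:cccPreservesCapturing} with $\kappa=\aleph_\omega$ and $\mu=\mathop{cov}(\aleph_\omega)^V$. It gives that in $V[G]$ the cofinality of the countable subsets of $\aleph_\omega$ under inclusion is still $\mu$. So $\mathop{cov}(\aleph_\omega)^{V[G]}=\mu>\aleph_{\omega+1}$.

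\emph{Step 2: the continuum.} Since $\cof(\theta)>\omega$, a standard nice-names count shows $\mathfrak{c}^{V[G]}=(\theta^{\omega})^V$. Because \textsc{CH} holds in $V$, we may, if we want $\mathfrak{c}=\theta$ exactly, first take $\theta$ with $\theta^\omega=\theta$ in $V$. In any case $\mathfrak{c}^{V[G]}\ge\theta$, so the continuum is as large as we like. Note also that $\mathfrak{c}\ge\mathop{cov}(\aleph_\omega)$ is automatic by Lemma~\ref{l:sam1}(i), since $\mathop{cov}(\aleph_\omega)\le\aleph_\omega^{\omega}\le\mathfrak{c}^{\aleph_0}=\mathfrak{c}$.

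\emph{Step 3: the bounding and dominating numbers.} It suffices to show $\mathfrak{d}=\aleph_1$ in $V[G]$, because $\aleph_1\le\mathfrak{b}\le\mathfrak{d}$. For this we use the classical fact that random forcing is $\omega^\omega$-bounding: every $f\in\omega^\omega$ in $V[G]$ is dominated by some $g\in\omega^\omega\cap V$.

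The sketch of that fact is as follows. Let $\dot f$ be a name and fix $p$. For each $n$, the Boolean values $[\![\dot f(n)=k]\!]$, $k\in\omega$, partition unity in a measure algebra. Choose $g(n)$ so that the measure of $[\![\dot f(n)>g(n)]\!]$ is below $\mu(p)\,2^{-n-2}$. Then the condition $q=p\wedge\bigwedge_n[\![\dot f(n)\le g(n)]\!]$ has positive measure and forces $\dot f\le g$.

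Thus $\omega^\omega\cap V$ is dominating in $V[G]$. Its size is $\mathfrak{c}^V=\aleph_1$, so $\mathfrak{d}^{V[G]}=\aleph_1$ and therefore $\mathfrak{b}=\mathfrak{d}=\aleph_1$.

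The only step requiring real care is Step 1, and it is handled by Lemma~\ref{lem:cccPreservesCapturing}. The point is that new countable subsets of $\aleph_\omega$ are each covered by a ground-model countable set. By c.c.c., a name for a countable set has only countably many possible values for each coordinate. Hence the ground-model cofinal family of size $\mu$ remains cofinal, and no family of size below $\mu$ can become cofinal. Everything else, namely $\omega^\omega$-boundedness and the count of the continuum, is standard.
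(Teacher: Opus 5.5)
Your proposal is correct and matches the paper's proof. You start from Magidor's model (\textsc{CH} together with $\mathop{cov}(\aleph_\omega)>\aleph_{\omega+1}$) and add many random reals. Lemma~\ref{lem:cccPreservesCapturing} preserves the covering number, and the $\omega^\omega$-bounding property of random forcing keeps $\mathfrak{b}=\mathfrak{d}=\aleph_1$. You simply supply the standard details the paper leaves implicit: the continuum count and the measure-algebra bounding argument.

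There are two inessential slips, neither of which affects the argument since you only need $\mathfrak{c}\ge\theta$ for large $\theta$:
\begin{enumerate}
\item Neither $\cof(\theta)>\omega$ nor \textsc{CH} is what yields $\theta^\omega=\theta$; take, e.g., $\theta=(2^\nu)^+$ instead.
\item Your side remark $\aleph_\omega^\omega\le\mathfrak{c}$ needs $\theta\ge\aleph_\omega$.
\end{enumerate}
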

\begin{proof}
    Take Magidor's model from \cite{Magidor} and recall in this model the \textsc{CH} holds, and $\mathop{cov}(\aleph_\omega)>\aleph_{\omega+1}$.
        Now do random real forcing to keep $\mathfrak{b}$ and $\mathfrak{d}$ equal to $\omega_1$, push $\mathfrak{c}$ as large as we like, and not change $\mathop{cov}(\aleph_\omega)>\aleph_{\omega+1}$. This last claim follows from Lemma~\ref{lem:cccPreservesCapturing}.
\end{proof}

A similar argument gives the following.
\begin{thm}\label{th:sam_of_b}
    If there is a supercompact cardinal then it is consistent that $\mathfrak{b}=\mathfrak{d}=\aleph_{\omega+1}$, $\mathfrak{c}$ as large as we like, and $\mathop{cov}(\aleph_{\omega+1})>\mathfrak{b}$.
\end{thm}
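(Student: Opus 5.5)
We need a model with $\mathfrak{b}=\mathfrak{d}=\aleph_{\omega+1}$, $\mathfrak{c}$ large, and $\mathop{cov}(\aleph_{\omega+1})>\aleph_{\omega+1}$. The plan follows the proof of Theorem~\ref{th:small_b_big_sam}, but replaces random reals by a c.c.c. forcing that makes $\mathfrak{b}=\mathfrak{d}$ equal to a prescribed regular cardinal.

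\textbf{Ground model.} Start from Magidor's model \cite{Magidor}, in which CH holds and $\mathop{cov}(\aleph_\omega)>\aleph_{\omega+1}$. By Lemma~\ref{l:sam1}(iii), $\mathop{cov}(\aleph_{\omega+1})=\mathop{cov}(\aleph_\omega)\cdot\aleph_{\omega+1}=\mathop{cov}(\aleph_\omega)>\aleph_{\omega+1}$ there. In this model $\mathfrak{b}=\mathfrak{d}=\aleph_1$.

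\textbf{Step 1: force $\mathfrak{b}=\mathfrak{d}=\aleph_{\omega+1}$.} Do a finite support iteration of Hechler forcing of length $\aleph_{\omega+1}$; call it $\mathbb{H}$. This forcing is c.c.c.
\begin{itemize}
\item Each Hechler real dominates the reals of the earlier stages.
\item Every real in the final model appears at some stage $<\aleph_{\omega+1}$, because $\aleph_{\omega+1}$ is regular of uncountable cofinality.
\item Hence the Hechler reals form a $<^*$-scale of length $\aleph_{\omega+1}$, so $\mathfrak{b}=\mathfrak{d}=\aleph_{\omega+1}$.
\end{itemize}

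\textbf{Step 2: make $\mathfrak{c}$ large.} Let $\theta$ be the desired size of the continuum, with $\theta^\omega=\theta$ and $\theta>\aleph_{\omega+1}$. Do not add random reals afterwards, since that may change $\mathfrak{b}$ and $\mathfrak{d}$. Instead, first add $\theta$ Cohen reals, then perform the Hechler iteration of length $\aleph_{\omega+1}$ over that model.
\begin{itemize}
\item The composite forcing is c.c.c., so $\mathfrak{c}=\theta$.
\item The Hechler iteration alone still yields a dominating scale of length $\aleph_{\omega+1}$, since every real appears at an intermediate stage of the iteration.
\end{itemize}

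Alternatively, use a finite support iteration of length $\theta\cdot\aleph_{\omega+1}$ (ordinal product) that adds Hechler reals at cofinally many stages of cofinality $\aleph_{\omega+1}$. The first approach is cleaner.

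The one point to verify is that, after $\theta$ Cohen reals, the $\aleph_{\omega+1}$-length Hechler iteration gives $\mathfrak{b}\ge\aleph_{\omega+1}$. Any family of fewer than $\aleph_{\omega+1}$ reals lies in an initial stage, by the c.c.c. and regularity, and is therefore dominated by the next Hechler real. For $\mathfrak{d}\le\aleph_{\omega+1}$, the Hechler reals themselves form a dominating family.

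\textbf{Step 3: preserve the covering number.} The whole forcing is c.c.c., so Lemma~\ref{lem:cccPreservesCapturing} applies: $\mathop{cov}(\aleph_{\omega+1})$ is unchanged and remains $>\aleph_{\omega+1}=\mathfrak{b}$. Cardinals are also preserved, so $\aleph_{\omega+1}$ has the same meaning in the extension.

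The main obstacle is the bookkeeping in Step~2, ensuring that $\mathfrak{b}$ and $\mathfrak{d}$ land exactly at $\aleph_{\omega+1}$ while $\mathfrak{c}$ is large. This is the standard fact that a finite support iteration of Hechler forcing of regular length $\lambda$, over any model, forces $\mathfrak{b}=\mathfrak{d}=\lambda$, while $\mathfrak{c}$ is determined by the ground model continuum raised to the power $\omega$.
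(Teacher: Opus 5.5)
Your proposal is correct and is essentially the argument the paper intends by ``a similar argument'' to Theorem~\ref{th:small_b_big_sam}: start from Magidor's model, apply a c.c.c.\ forcing that makes $\mathfrak{b}=\mathfrak{d}=\aleph_{\omega+1}$ and enlarges $\mathfrak{c}$, then use Lemma~\ref{lem:cccPreservesCapturing} together with Lemma~\ref{l:sam1}(iii) to get $\mathop{cov}(\aleph_{\omega+1})=\mathop{cov}(\aleph_\omega)>\aleph_{\omega+1}$. One small correction: adding random reals after the Hechler iteration would not in fact disturb $\mathfrak{b}$ or $\mathfrak{d}$, because random forcing is $\omega^\omega$-bounding and therefore preserves the Hechler scale. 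Your Cohen-then-Hechler order works equally well.
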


While from Gitik's example, Corollary~\ref{cor:noBounds}, we derive this next scenario.
\begin{thm}\label{th:sam_of_1st_fixpt}
    If there are sufficiently large cardinals then it is consistent that for $\kappa$ the first fixed point of the $\aleph$-function we have $\mathfrak{b}=\mathfrak{d}=\aleph_1$,  $\mathop{cov}(\kappa)$ as large as we like, and $\mathfrak{c}$ even larger.
\end{thm}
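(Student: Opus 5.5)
Start in Gitik's model from Corollary~\ref{cor:noBounds}. It comes from a ground model with the required large cardinals, and in it $\kappa$ is the first fixed point of the $\aleph$-function, \textsc{GCH} holds below $\kappa$ (in particular \textsc{CH}, so $\mathfrak{b}=\mathfrak{d}=\aleph_1$), and $\mathop{cov}(\kappa)=2^\kappa\ge\lambda$ for a prescribed $\lambda>\kappa$. Now fix a target cardinal $\theta$ with $\theta>\mathop{cov}(\kappa)$ and $\theta^\omega=\theta$; for instance take $\theta=(2^{\kappa})^{+}$ raised to the $\omega$-th power, or simply any $\theta$ of uncountable cofinality with $\theta^\omega=\theta$, which exists in the model. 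Then force with the measure algebra adding $\theta$ random reals, exactly as in the proof of Theorem~\ref{th:small_b_big_sam}.

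Next, verify the three required properties in the extension. First, random real forcing is $\omega^\omega$-bounding, so every real in the extension is dominated by a ground model real. Hence the ground model's $\omega^\omega$, of size $\aleph_1$, remains dominating, giving $\mathfrak{d}=\aleph_1$ and so $\mathfrak{b}=\mathfrak{d}=\aleph_1$. Second, the forcing is c.c.c. and has size $\theta^\omega=\theta$, so it preserves cardinals and makes $\mathfrak{c}=\theta$. Since cardinals are preserved, $\kappa$ is still the first fixed point of the $\aleph$-function, because the $\aleph$-sequence is unchanged. Third, Lemma~\ref{lem:cccPreservesCapturing} gives that $\cof([\kappa]^\omega,\subseteq)$ is unchanged. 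Thus $\mathop{cov}(\kappa)$ in the extension equals its ground value, which is at least $\lambda$, and is therefore as large as we like yet below $\theta=\mathfrak{c}$.

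The main point needing care is the ordering of the choices: first choose $\lambda$, then obtain Gitik's model, then choose $\theta$ above $2^\kappa$ there. In particular, $\mathfrak{c}$ must end up strictly larger than $\mathop{cov}(\kappa)$, which this ordering ensures. A secondary check is that $\kappa\le\mathfrak{c}$ in the final model, so that $\kappa$ is relevant to separable metrizable spaces; this holds because $\theta>2^\kappa>\kappa$. No other step seems problematic, since preservation of $\mathfrak{b}$, $\mathfrak{d}$ and $\mathop{cov}$ are standard facts already invoked for Theorem~\ref{th:small_b_big_sam}.
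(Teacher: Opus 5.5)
Your proposal is correct and follows the paper's route: start in Gitik's model from Corollary~\ref{cor:noBounds}, where \textsc{CH} gives $\mathfrak{b}=\mathfrak{d}=\aleph_1$ and $\mathop{cov}(\kappa)=2^\kappa$ is as large as desired. Then add many random reals as in Theorem~\ref{th:small_b_big_sam}, using $\omega^\omega$-boundedness to keep $\mathfrak{b}=\mathfrak{d}=\aleph_1$ and Lemma~\ref{lem:cccPreservesCapturing} to keep $\mathop{cov}(\kappa)$ fixed while $\mathfrak{c}$ is pushed above it; your choice of $\theta>2^\kappa$ and your remark that cardinal preservation keeps $\kappa$ the first $\aleph$-fixed point supply details the paper leaves implicit.
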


\paragraph{Lower Bounds of the Covering Number - Mod Finite  Scales} Below, see Section~\ref{ssec:ksam}, we will want to control the covering number not just of a single cardinal but of families. This will be achieved via lower bounds on the covering number provided by mod finite scales. 

Call a set $A$ of cardinals \emph{suitable} for a cardinal $\kappa$ of countable cofinality if $A$ is countable, every member of $A$ is uncountable, regular and $\sup A=\kappa$.
 Let $<^*$ be the mod finite order on $\prod A$, so $f<^*g$ if and only if for all but finitely many $\mu$ we have $f(\mu)<g(\mu)$.

 \begin{lem}\label{l:suitable}
     If $A$ is suitable for $\kappa$ then $([\kappa]^\omega,\subseteq) \tq (\prod A,<^*)$. 

     Hence, $\mathop{cov}(\kappa) \ge \sup \{\cof\left(\prod A,<^*\right) :A \text{ suitable for }\kappa\}$.
 \end{lem}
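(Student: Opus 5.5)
We need a Tukey morphism $\phi_+:[\kappa]^\omega \to \prod A$ that carries cofinal sets to cofinal sets. Since $A$ is countable and every $\mu\in A$ is an uncountable regular cardinal, for each countable $S\subseteq\kappa$ we define $\phi_+(S)\in\prod A$ by
\[\phi_+(S)(\mu)=\sup(S\cap\mu)+1 .\]
This value is below $\mu$ because $S\cap\mu$ is countable and $\cof(\mu)>\omega$, so $\phi_+(S)$ really lies in $\prod A$.

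To see that $\phi_+$ maps cofinal sets to cofinal sets, we use the pair formulation and exhibit $\psi_-:\prod A\to[\kappa]^\omega$. Set $\psi_-(f)=\operatorname{ran}(f)=\{f(\mu):\mu\in A\}$, which is countable, and enlarge it by a fixed countably infinite set if needed so that it lies in $[\kappa]^\omega$. We must check condition (iii): if $\psi_-(f)\subseteq S$, then $f<^*\phi_+(S)$. For every $\mu\in A$ we have $f(\mu)\in S\cap\mu$, so $f(\mu)\le\sup(S\cap\mu)<\phi_+(S)(\mu)$. Thus in fact $f<\phi_+(S)$ everywhere, and in particular $f<^*\phi_+(S)$. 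So $(\psi_-,\phi_+)$ is a Tukey morphism and $([\kappa]^\omega,\subseteq)\tq(\prod A,<^*)$.

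The ``hence'' clause is then immediate. A Tukey morphism gives $\mathop{cov}(\kappa)=\cof([\kappa]^\omega,\subseteq)\ge\cof(\prod A,<^*)$, and taking the supremum over all $A$ suitable for $\kappa$ yields the stated bound. The only point needing care is that $\phi_+(S)(\mu)<\mu$, which is exactly where uncountable regularity of the members of $A$ is used. The hypothesis $\sup A=\kappa$ is not needed for the inequality itself; it only guarantees that $A\subseteq\kappa$, which ensures $\psi_-(f)\subseteq\kappa$.
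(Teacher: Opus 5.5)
Your proposal is correct and follows the paper's proof essentially verbatim: the same maps $\phi_+(S)(\mu)=\sup(S\cap\mu)+1$ and $\psi_-(f)=\operatorname{ran}(f)$, with uncountable regularity of each $\mu\in A$ used to keep $\phi_+(S)(\mu)<\mu$. Your padding of $\operatorname{ran}(f)$ by a fixed countably infinite set, so that $\psi_-(f)$ is infinite even when $f$ takes only finitely many values, is a small technical point that the paper passes over.
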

\begin{proof}
    For $S$ in $[\kappa]^\omega$ and $\mu$ in $A$, set $\phi_+(S)(\mu)=\sup (S \cap \mu)+1$, and note, since $S$ is countable and $\mu$ has uncountable cofinality, that this is in $\mu$; so $\phi_+$ is in $\prod A$. For $f$ in $\prod A$ set $\psi_-(f)=\mathop{im}(f)$. 
    Now if $\psi_-(f) \subseteq S$ then, for all $\mu \in A$, we have $f(\mu)$ in $S$, so $\phi_+(f)(\mu) > \sup (S \cap \mu) \ge f(\mu)$; so $f <^* \phi_+(S)$, as required.
\end{proof}

For sufficiently small $\kappa$ the above lower bound on the covering number is attained. Suppose $\kappa=\aleph_\alpha$ is such that $1 \le \alpha < \omega_1$ and $\alpha$ has cofinality $\omega$.  Then $\kappa$ has a maximum suitable set $A_\infty$, namely $A_\infty=[\aleph_1,\kappa] \cap \textsc{REG}$. By a standard \textsc{PCF} fact (see \cite{AbrahamMagidor}), $\mathop{cov}(\kappa)=\mathop{max\  pcf}(\kappa)=\cof(\prod A_\infty,<)$; this in turn equals $\cof(\prod A_\infty,<^*)$. Thus we have shown:
\begin{lem}
    If $\kappa=\aleph_\alpha$ has  countable cofinality and $1 \le \alpha < \omega_1$ then 
    \[ \mathop{cov}(\kappa) = \sup \{\cof\left(\prod A,<^*\right) :A \text{ suitable for }\kappa\}.\]
\end{lem}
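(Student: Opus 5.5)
The inequality $\ge$ is already available from Lemma~\ref{l:suitable}, so only $\le$ needs work. For that, the plan is to exhibit one suitable set $A$ for which $\cof(\prod A,<^*)$ already reaches $\mathop{cov}(\kappa)$. The natural choice is the largest candidate, $A_\infty=[\aleph_1,\kappa]\cap\textsc{REG}$, as in the remark before the statement.

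First check that $A_\infty$ is suitable. Since $\alpha<\omega_1$, the interval $[\aleph_1,\kappa)$ contains only countably many cardinals, so $A_\infty$ is countable. Its members are uncountable and regular by definition. The successor cardinals $\aleph_{\beta+1}$ with $\beta<\alpha$ are regular and cofinal in $\kappa$, because $\alpha$ is a limit of cofinality $\omega$; hence $\sup A_\infty=\kappa$.

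Next, invoke the standard \textsc{PCF} fact (Abraham--Magidor) that $\mathop{cov}(\kappa)=\cof([\kappa]^\omega,\subseteq)=\max\operatorname{pcf}(A_\infty)$. This applies because $A_\infty$ is a progressive interval of regulars: $|A_\infty|\le\aleph_0<\aleph_1=\min A_\infty$. The same step uses the equality $\max\operatorname{pcf}(A)=\cof(\prod A,<)$ for progressive $A$, which yields $\mathop{cov}(\kappa)=\cof(\prod A_\infty,<)$. Here $<$ is the everywhere-domination order; a cleaner route to this bound is sketched in the last paragraph.

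The main obstacle is the passage from $<$ to $<^*$. The map $\cof(\prod A_\infty,<)\le\cof(\prod A_\infty,<^*)\cdot 2^{\aleph_0}$ is too lossy, since $\mathfrak c$ may be large. Instead use that $A_\infty$ is countable and each $\mu\in A_\infty$ is regular uncountable. Given a $<^*$-cofinal family $\mathcal F$, close it under finite modifications where each modified value is drawn from a fixed cofinal subset of each coordinate. Any $g\in\prod A_\infty$ is $<^*$-below some $f\in\mathcal F$, and the finitely many exceptional coordinates can be patched by raising $f$ there. This requires only $|\mathcal F|\cdot\sup_{\mu\in A_\infty}\mu$ many patched functions, that is, at most $|\mathcal F|\cdot\kappa$. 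Since $\cof(\prod A_\infty,<^*)\ge\kappa$ (the maximal element of $\prod A_\infty$ at each coordinate $\mu$ requires $\mu$ many values, so cofinality is at least $\sup A_\infty=\kappa$), we get $\cof(\prod A_\infty,<)=\cof(\prod A_\infty,<^*)$. Chaining the equalities then gives $\mathop{cov}(\kappa)\le\cof(\prod A_\infty,<^*)\le\sup\{\cof(\prod A,<^*):A\text{ suitable}\}$.

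If one prefers to avoid quoting the pcf--max equality in terms of $<$, the upper bound can be obtained directly. Map $f\in\prod A_\infty$ to the countable set $\bigcup_{\mu}f(\mu)\cap$(a fixed countable-cofinal skeleton), and argue by induction on $\alpha<\omega_1$ using Lemma~\ref{l:sam1}(iii) at successors together with the pcf theorem at limits. I would cite the Abraham--Magidor fact rather than redo this.
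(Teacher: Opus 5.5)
Your proposal is correct and follows the paper's route: Lemma~\ref{l:suitable} gives $\ge$, and for $\le$ you take the maximal suitable set $A_\infty=[\aleph_1,\kappa]\cap\textsc{REG}$, cite the \textsc{PCF} identity $\mathop{cov}(\kappa)=\max\operatorname{pcf}(A_\infty)=\cof(\prod A_\infty,<)$, and then pass to $<^*$ --- a step the paper only asserts, and which your finite-patching argument correctly justifies. The one weak spot is your parenthetical reason for $\cof(\prod A_\infty,<^*)\ge\kappa$, which does not work as stated; use a diagonal argument instead: given fewer than $\kappa$ functions, choose $\mu_0<\mu_1<\cdots$ in $A_\infty$ cofinal in $\kappa$ with every $\mu_n$ larger than the size of the family, and set $g(\mu_n)=\sup_f f(\mu_n)+1<\mu_n$ (using regularity of $\mu_n$), so that no $f$ in the family satisfies $g<^*f$.
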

 
 A (mod finite) \emph{scale} in $\prod A$ is a sequence $\vec{f}=\la f_\alpha:\alpha<\lambda\ra$ of functions in the product which is increasing mod finite ($\alpha<\beta<\lambda$ implies $f_\alpha<^*f_\beta$) and cofinal mod finite. 
 The cardinal $\lambda$ is called the length of the scale. 
 In Tukey terms, there is a scale of length $\lambda$ in $\prod A$ if and only if $\left(\prod A,<^*\right) \te \lambda$. 
If a product $\prod A$ carries a scale of length $\lambda$ regular, then its cofinality  is $\lambda$. 
 Abusing terminology, let us say that $\kappa$ admits a (mod finite) scale of length $\lambda$ if for some $A$ suitable for $\kappa$ the product $\prod A$ carries a scale of length $\lambda$. 
 By Lemma~\ref{l:suitable}, we conclude:

\begin{cor}\label{cor:ScalesIncreaseSam}
    Suppose that $\kappa$ is a singular cardinal of countable cofinality. If $\kappa$ admits a scale of regular length $\lambda$ then $\mathop{cov}(\kappa)\geq\lambda$. 
\end{cor}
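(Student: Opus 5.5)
The plan is to combine Lemma~\ref{l:suitable} with the basic fact that a scale of regular length pins down the cofinality of the product. Suppose $\kappa$ has countable cofinality and admits a scale of regular length $\lambda$. By the definition of ``admits'', we fix a set $A$ suitable for $\kappa$ and a scale $\vec{f}=\la f_\alpha:\alpha<\lambda\ra$ in $\prod A$. By Lemma~\ref{l:suitable} we have $([\kappa]^\omega,\subseteq)\tq(\prod A,<^*)$. Tukey reductions transfer cofinalities downward, so
\[
\mathop{cov}(\kappa)=\cof([\kappa]^\omega,\subseteq)\ge\cof\left(\prod A,<^*\right).
\]
It therefore suffices to show that $\cof(\prod A,<^*)\ge\lambda$.

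The only real step is this inequality, which follows from regularity of $\lambda$. Let $\mathcal{G}\subseteq\prod A$ be cofinal mod finite with $|\mathcal{G}|<\lambda$. Since $\vec f$ is a scale, it is cofinal, so for each $g\in\mathcal{G}$ there is some $\alpha_g<\lambda$ with $g<^*f_{\alpha_g}$. Because $\lambda$ is regular and $|\mathcal{G}|<\lambda$, the ordinal $\beta=\sup_{g\in\mathcal G}\alpha_g+1$ is still below $\lambda$.

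Now consider $f_\beta$. Since $\mathcal G$ is cofinal, some $g\in\mathcal G$ satisfies $f_\beta<^*g$. But then
\[
f_\beta<^*g<^*f_{\alpha_g},
\]
and since $\alpha_g<\beta$, monotonicity of the scale gives $f_{\alpha_g}<^*f_\beta$. Hence $f_\beta<^*f_\beta$, which is impossible because $<^*$ is irreflexive; here transitivity of $<^*$ on $\prod A$ is immediate. So every cofinal family has size at least $\lambda$, and $\mathop{cov}(\kappa)\ge\lambda$.

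I do not expect a genuine obstacle. The one point to handle carefully is that the mod finite order is strict, so cofinality is read as ``every element lies strictly below some member''. If instead one uses the $\le^*$ reading, replace $f_\beta$ by $f_{\beta+1}$ in the argument above, using $\beta+1<\lambda$.
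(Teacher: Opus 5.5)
Your proposal is correct and is essentially the paper's argument: Lemma~\ref{l:suitable} gives $\mathop{cov}(\kappa)\ge\cof(\prod A,<^*)$, and the paper simply quotes the standard fact that a scale of regular length $\lambda$ makes this cofinality equal to $\lambda$, whereas you prove the needed inequality directly using regularity. The one detail you leave unstated is that irreflexivity of $<^*$ needs $A$ to be infinite; this holds because a finite suitable $A$ would give $\kappa=\max A$ regular, contradicting the singularity of $\kappa$.
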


\subsection{The Sampling Number}\label{sec:sam}

Let $\ii$ be the general relation ($\ii$ for ``infinite intersection'') defined as follows: $A  \ii   B$ if and only if $A\cap B$ is infinite. Define the \emph{sampling number} of a cardinal $\kappa$ to be $\mathop{sam}(\kappa)=\cof( [\kappa]^{\omega},\ii)$. That is, $\mathop{sam}(\kappa)$ is the least number of countable subsets of $\kappa$ needed to have infinite intersection with every countably-infinite subset of $\kappa$. 

Clearly the sampling is closely related to the covering number. 
The next result, due to Will Brian \cite{Brian}, tells us that in fact they are equal.
\begin{thm}[Brian] \label{th:cov=sam} For every infinite cardinal $\kappa$ we have 
\[([\kappa]^\omega,\ii)\te([\kappa]^\omega,\subseteq), \quad \text{ and hence }\mathop{sam}(\kappa)=\mathop{cov}(\kappa).\]
\end{thm}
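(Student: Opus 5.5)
The plan is to exhibit Tukey morphisms in both directions. Since Tukey equivalence gives equal cofinalities, this yields $\mathop{sam}(\kappa)=\mathop{cov}(\kappa)$. Throughout, $[\kappa]^\omega$ is read as the countably infinite subsets of $\kappa$. If finite sets are to be allowed, every step below goes through unchanged, using enumerations with repetition.

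\emph{The easy direction, $([\kappa]^\omega,\subseteq)\gtq([\kappa]^\omega,\ii)$.} I will use the identity map, via the pair $(\psi_-,\phi_+)=(\mathrm{id},\mathrm{id})$. If $A\subseteq B$ with $A$ infinite, then $A\cap B=A$ is infinite, so $A \ii B$. Hence every $\subseteq$-cofinal family is $\ii$-cofinal.

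\emph{The substantive direction, $([\kappa]^\omega,\ii)\gtq([\kappa]^\omega,\subseteq)$.} The idea is to encode a countable set $S$ by the set of codes of its finite initial segments. Then any set meeting that code set infinitely often must contain codes of arbitrarily long initial segments, and decoding those recovers all of $S$.
\begin{enumerate}[noitemsep,topsep=1pt]
\item Since $\kappa$ is infinite, $|\kappa^{<\omega}|=\kappa$, so fix a bijection $e:\kappa^{<\omega}\to\kappa$.
\item For $S\in[\kappa]^\omega$, fix an enumeration $(s_n)_n$ of $S$ and define $\psi_-(S)=\{e(\langle s_0,\dots,s_n\rangle):n\in\omega\}$. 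This set is infinite because $e$ is injective and the sequences have distinct lengths; so $\psi_-(S)\in[\kappa]^\omega$.
\item For $B\in[\kappa]^\omega$, define $\phi_+(B)=\bigcup_{\beta\in B}\mathop{ran}(e^{-1}(\beta))$. This is a countable union of finite sets, hence countable; if it happens to be finite, pad it with $\omega$ so that it lies in $[\kappa]^\omega$.
\end{enumerate}
To verify the Tukey condition, suppose $\psi_-(S)\ii B$. Then $B$ contains $e(\langle s_0,\dots,s_n\rangle)$ for infinitely many $n$, hence for arbitrarily large $n$. Every $s_m$ therefore appears in the range of some decoded sequence, and so $S\subseteq\phi_+(B)$. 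Thus $(\psi_-,\phi_+)$ is a Tukey morphism pair from $([\kappa]^\omega,\ii)$ to $([\kappa]^\omega,\subseteq)$.

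Combining the two directions gives $\te$, and hence $\mathop{sam}(\kappa)=\mathop{cov}(\kappa)$. The only real obstacle is finding the right coding. A naive map sending $B$ to itself fails, because infinite intersection with $S$ says nothing about the rest of $S$. Coding by initial segments is what turns ``infinitely many hits'' into ``all of $S$.'' The remaining checks, countability of $\phi_+(B)$ and infiniteness of $\psi_-(S)$, are routine.
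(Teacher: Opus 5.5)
Your proof is correct and is essentially the paper's argument: the identity map handles one direction, and for the other you code $S$ by its initial segments, giving an infinite family, and decode by taking unions. The only difference is that the paper codes by finite sets in $[\kappa]^{<\omega}$ while you code by finite sequences in $\kappa^{<\omega}$, which is why you correctly need the padding step. Drop your parenthetical about allowing finite sets, though: with finite sets in the domain, $\ii$ has no cofinal family, and the easy direction's implication $A\subseteq B\Rightarrow A\ii B$ fails.
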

\begin{proof} First note that a cofinal set for $([\kappa]^\omega,\subseteq)$ is certainly cofinal in $( [\kappa]^{\omega},\ii)$, so taking $\phi_+$ to be the identity shows $([\kappa]^\omega,\subseteq) \tq ( [\kappa]^{\omega},\ii)$. 

For the converse,  note that $\kappa$ and $[\kappa]^{<\omega}$ are bijective, so $([\kappa]^\omega,\ii)$ is isomorphic to $([[\kappa]^{<\omega}]^\omega,\ii)$. We show $([[\kappa]^{<\omega}]^\omega,\ii)\tq([\kappa]^\omega,\subseteq)$. 
    To see this, first for each $T$ in $[\kappa]^\omega$, fix an enumeration, $T=\{t_i\}_{i\in \omega}$, 
    and then  define  $\psi_-:[\kappa]^\omega \to [[\kappa]^{<\omega}]^\omega$ 
    by $\psi_-(\{t_i\}_{i \in \omega})=\{ \{t_i\}_{i  \le n}\} : n \in \omega\}$, and $\phi_+:[[\kappa]^{<\omega}]^\omega \to [\kappa]^\omega$ by $\phi_+(S)=\bigcup S$. 
    Now if $\psi_-(\{t_i\}_{i \in \omega}) \ii S$, then for infinitely many $n$, $\{t_i\}_{i \le n} \in S$, so $t_0,\ldots,t_n \in \bigcup S=\phi_+(S)$, and hence, $\{t_i\}_{i \in \omega} \subseteq \phi_+(S)$, as required. 
\end{proof}

\section{The Minimum Size of Generating Families} 
\subsection{How Many Sequences Suffice?}\label{ssec:seqsam}
\paragraph{The $\mathbf{seq}(M)$ Relation} 
Let $M$ be a (separable metrizable) space, and let $\nci$ be the general relation defined for subsets $S$ and $T$ of $M$ by  
$S \nci T$ if and only if $S \cap T$ is not closed in $T$. 
Set $\mathop{NC}(M)$ to be all subsets of $M$ that are not closed, and let   
 $\mathop{CS^+}(M)$ be all infinite convergent sequences with limit. 
Define $\mathbf{seq}(M)=(NC(M),\mathop{CS}^+(M),\nci)$,
the \emph{sequential structure} of $M$, and  note that its cofinal sets correspond to generating collections of (infinite) convergent sequences. 
Hence our invariant $\mathop{seq}(M)$ is the cofinality of the relation $\mathbf{seq}(M)$. From \cite{FG_Shape} we have an explicit, combinatorial description of $\mathbf{seq}(M)$ up to Tukey-equivalence which is summarized in the next paragraph. Below we use this to compute $\mathop{seq}(M)$. 
We will find that $\mathop{seq}(M)$ is ranked by the size of $M$: if $|M| \le |N|$ then $\mathop{seq}(M)\le\mathop{seq}(N)$. Consequently we will compute $\mathop{seq}(M)$ first for countable spaces, before moving on to the uncountable case.

\paragraph{Characterization and Realization of $\mathbf{seq}(M)$}
For a point $x$ in $M$ fix $U_x$ an open neighborhood of $x$ of minimal cardinality. 
 Let $M_\infty$  be the set of $x$ in $M$ with $|U_x|$ equal to $|M|$. 
Separate the points of $M_\infty$ in two as follows. Let $M_\infty^+$ be all $x$ from $M_\infty$ which have a neighborhood base $(B_n)_n$ such that every $D_n=B_n \setminus B_{n+1}$ has cardinality $|M|$. Let $M_\infty^-$ be $M_\infty \setminus M_\infty^+$.
Then a point $x$ from $M_\infty$ is in $M_\infty^-$ if and only if it has a neighborhood base $(B_n)_n$ such that every $D_n=B_n \setminus B_{n+1}$ has cardinality $<|M|$. 
Let $\lambda_M=|M_\infty^+|$ and $\mu_M=|M_\infty^-|$. 
The cardinals $|M|,\lambda_M$ and $\mu_M$ suffice to characterize $\mathbf{seq}(M)$ up to Tukey equivalence.

\begin{thm}\label{thm:seqTukeyEquivalence} (\cite{FG_Shape}, Theorem~4.10) Let $M$ be separable metrizable.  Then $\lambda_M \le |M|$, $\mu_M \le \omega$ and $\mathbf{seq}(M)$ is Tukey equivalent to 
    \[
    P(|M|) \times (\lambda_M,=) + Q(|M|) \times (\mu_M,=) + S(|M|).
    \]   
\end{thm}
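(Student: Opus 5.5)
The plan is to reduce the computation of $\mathbf{seq}(M)$ to three pieces. For each point $x$ we separate the structure of convergent sequences at $x$ from the ``size'' of the space near $x$, and then assemble the local pieces by the sum operation on relations, using facts (A)--(C).

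\textbf{Cardinal bounds.} First we check $\lambda_M\le|M|$, which is trivial since $M_\infty^+\subseteq M$. For $\mu_M\le\omega$, fix a countable base $\mathcal{B}$ and argue as follows. If $x\in M_\infty^-$ with base $(B_n)_n$ and $|D_n|<|M|$, then $|U_x|=|M|$ forces $|M|$ to be a limit of smaller cardinals, hence of countable cofinality. Moreover, $x$ is the unique point at which every neighborhood has size $|M|$ while the annuli are small. So assign to $x$ some basic open $B\in\mathcal{B}$ with $x\in B\subseteq U_x$ such that $B\setminus\{x\}$ is a countable union of sets of size $<|M|$. Two distinct points of $M_\infty^-$ cannot share such a $B$: if $y\neq x$ both lie in $B$, then a small neighborhood of $y$ missing $x$ lies in finitely many annuli around $x$, so it has size $<|M|$, contradicting $y\in M_\infty$. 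Hence $\mu_M\le|\mathcal{B}|=\omega$.

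\textbf{Local Tukey types.} Next we compute the Tukey type of the relation $\mathbf{seq}_x(M)$. Its domain is the sets $S$ with $x\in\overline{S}\setminus S$, its codomain is the sequences converging to $x$, and $S\nci T$ means $T\cap S$ is infinite. Choose a base $(B_n)_n$ at $x$ and set $D_n=B_n\setminus B_{n+1}$.
\begin{itemize}
\item A sequence converging to $x$ meets each $D_n$ finitely, so it is essentially an element of $\prod_n[D_n]^{<\omega}$.
\item A set with $x$ in its closure meets infinitely many $D_n$.
\end{itemize}
Taking $\phi_+$ to be ``the traces of the sequence on the $D_n$'' and $\psi_-$ to be ``the union of the $F_n$'' gives $\mathbf{seq}_x\te(\prod_n[D_n]^{<\omega})_\infty,i_\infty)$. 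By the definitions, this is $P(|M|)$ for $x\in M_\infty^+$ and $Q(|M|)$ for $x\in M_\infty^-$. For the remaining points, where $|U_x|<|M|$, we get $P(\kappa)$ or $Q(\kappa)$ for some $\kappa<|M|$ with $\kappa\le|U_x|$.

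\textbf{Global assembly.} Since a set is closed iff it contains its limits, every nonclosed $S$ has a witness point $x\in\overline S\setminus S$. The map ``pick a witness'' then yields $\mathbf{seq}(M)\te\sum_{x\in M}\mathbf{seq}_x(M)$. Using (A), the points of $M_\infty^+$ contribute $P(|M|)\times(\lambda_M,=)$ and those of $M_\infty^-$ contribute $Q(|M|)\times(\mu_M,=)$.

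The remaining sum over small points must be shown Tukey equivalent to $S(|M|)$, or absorbed. The idea is that $M\setminus M_\infty$ is covered by countably many basic open sets of size $<|M|$, so group its points by size $\kappa_n$. The points of size about $\kappa_n$ number at most $\kappa_n$ (or $|M|$), giving $P(\kappa_n)\times(\kappa_n,=)$. When $\cof|M|\ne\omega$, these pieces are absorbed by $P(|M|)$ provided $\lambda_M\ge 1$. This absorption, and the upper bound it requires, is the delicate part, and I expect it to be the main obstacle. We would handle it via (B), (C) and an explicit morphism that codes a point together with a finite-set sequence into $P(|M|)$. For the lower bound, we need to realize $S(|M|)$ inside $M$, which uses that $|M|$ is attained as a limit of sizes of neighborhoods.
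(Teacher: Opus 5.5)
Your local-to-global plan (compute $\mathbf{seq}_x(M)$ at each point, then add them up) is reasonable. The step you defer as ``delicate'' is where the proposal breaks, and the mechanism you suggest for it cannot work.

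\textbf{The absorption step fails.} No morphism can code a point together with a finite-set sequence into a single copy of $P(|M|)$. Take any $G,G'$ in $([\kappa]^{<\omega})^\omega_\infty$. The coordinatewise union $(G_n\cup G'_n)_n$ is $i_\infty$-above both, so $P(\kappa)\not\ge_T(2,=)$. By contrast, $P(\mu)\times(\mu,=)$ for $\mu\ge 2$, and $P(\kappa)+\mathbf{B}$ for any $\mathbf{B}$ with nonempty domain, are both $\ge_T(2,=)$. So ``absorbed by $P(|M|)$ provided $\lambda_M\ge 1$'' is false as stated. If $\lambda_M=1$ and some non-isolated $y$ lay outside $M_\infty$, your own decomposition would give $\mathbf{seq}(M)\ge_T P(|M|)+\mathbf{seq}_y(M)\ge_T(2,=)$.

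\textbf{The missing observation.} This configuration never occurs. $M\setminus M_\infty$ is exactly the union of the countably many basic open sets of size $<|M|$. So if $\cof|M|>\omega$, then $|M\setminus M_\infty|<|M|$. Hence $M_\infty^-=\emptyset$ and $\lambda_M=|M_\infty|=|M|$. Absorption is then into $P(|M|)\times(|M|,=)=_T\sum_{\alpha<|M|}P(|M|)$, using (A) and (C). You are adding fewer than $|M|$ summands, each $\le_T P(|M|)$, since annuli of size $\le\nu$ give local type $\le_T P(\nu)$.

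\textbf{The lower bound for $S(|M|)$ is not established.} From $\mathbf{A}\ge_T\mathbf{B}$ and $\mathbf{A}\ge_T\mathbf{C}$ you cannot infer $\mathbf{A}\ge_T\mathbf{B}+\mathbf{C}$; take all three equal to $P(\kappa)$. So, unless it is absorbed, $S(|M|)$ must be realized at points disjoint from $M_\infty$ and then combined via (B).
\begin{itemize}
\item If $\lambda_M=|M|$, then $S(|M|)$ is absorbed into $P(|M|)\times(|M|,=)$.
\item If $\cof|M|=\omega$ and $\lambda_M<|M|$, then $|M_\infty|=\lambda_M+\mu_M<|M|$, so $|M\setminus M_\infty|=|M|$. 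Choose disjoint $Y_n\subseteq M\setminus M_\infty$ of regular uncountable sizes $\kappa_n\nearrow|M|$. Computed in $Y_n$, the set $(Y_n)_\infty$ has $\kappa_n$ points. At each such $y$, restricting sequences to $Y_n$ gives $\mathbf{seq}_y(M)\ge_T\mathbf{seq}_y(Y_n)=_T P(\kappa_n)$.
\end{itemize}
It is this, not ``$|M|$ as a limit of neighborhood sizes,'' that produces $S(|M|)$. A point of $M_\infty^+$ can have every neighborhood of full size, yet $S(|M|)$ must still be found.

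\textbf{The upper bound when $\cof|M|=\omega$.} Make your grouping precise: cover $M\setminus M_\infty$ by countably many basic open $B_k$ with $|B_k|<|M|$. This gives
$\sum_{y\notin M_\infty}\mathbf{seq}_y(M)\le_T\sum_k P(|B_k|)\times(|B_k|,=)\le_T S(|M|)$.

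\textbf{Two smaller repairs.}
\begin{itemize}
\item ``Pick a witness'' only yields $\sum_x\mathbf{seq}_x(M)\ge_T\mathbf{seq}(M)$. For the converse, send $(x,S)$ to a sequence in $S$ converging to $x$, whose unique witness is $x$, and send $T$ to $(\lim T,T)$.
\item For $x\in M_\infty^-$, you must regroup the annuli into blocks of strictly increasing infinite size. Then invoke the independence of the type of $Q(|M|)$ from the chosen sequence.
\end{itemize}
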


The case when $\lambda_M$ and $\mu_M$ are both equal to zero is an important special case. 
Define a space $M$ to be \emph{locally small} if  every point of $M$ has a neighborhood of size strictly less than $|M|$, and note  
this is equivalent to saying $\lambda_M=0=\mu_M$. 
Note as well that since the neighborhood may be assumed to come from any given base, and separable metrizable spaces have  countable bases, it follows that if $M$ is a locally small, separable metrizable space then $|M|$ has countable cofinality. 

Every Tukey type mentioned in the characterization can be realized as the $\mathbf{seq}(M)$ of a separable metrizable space. 
\begin{thm}\label{th:seq_real}(\cite{FG_Shape}, Theorem~4.11) 

For every cardinal $\kappa \le \mathfrak{c}$ there is a separable metrizable space $M=M(\kappa)$ such that $\mathbf{seq}(M) \gte P(\kappa) \times (\kappa,=)$. 

    For every cardinal $\kappa \le \mathfrak{c}$ of countable cofinality and $\lambda < \kappa$ and $\mu \in \omega+1$ there is a separable metrizable space $N=N(\kappa,\lambda,\mu)$ such that $\mathbf{seq}(N) \gte P(\kappa) \times (\lambda,=) + Q(\kappa) \times (\mu,=) + S(\kappa)$. 
\end{thm}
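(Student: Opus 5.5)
The plan is to prove both parts by constructing explicit subspaces of the Cantor set (or of a countable product of Cantor sets), computing the three invariants $|M|$, $\lambda_M$ and $\mu_M$, and then reading off the Tukey type from Theorem~\ref{thm:seqTukeyEquivalence}. The only extra Tukey work is to absorb the summands that the characterization always contributes.

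\emph{First part.} Fix $Z\subseteq 2^\omega$ with $|Z|=\kappa$, which is possible since $\kappa\le\mathfrak{c}$. Let $M=M(\kappa)=\{s^\frown z : s\in 2^{<\omega},\ z\in Z\}$, the set of finite modifications of members of $Z$. Then $|M|=\kappa$, and every basic clopen set $[s]$ meets $M$ in a set of size $\kappa$. Hence for every $x\in M$ the base $B_n=[x\restriction n]\cap M$ has each annulus $B_n\setminus B_{n+1}=[(x\restriction n)^\frown(1-x(n))]\cap M$ of size $\kappa$. So $M_\infty^+=M$, which gives $\lambda_M=\kappa$ and $\mu_M=0$.

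Theorem~\ref{thm:seqTukeyEquivalence} then gives $\mathbf{seq}(M)\te P(\kappa)\times(\kappa,=)+S(\kappa)$. It remains to show $P(\kappa)\times(\kappa,=)\tq S(\kappa)$ when $\cof(\kappa)=\omega$; in the other case $S(\kappa)$ is trivial. By fact~(A), $P(\kappa)\times(\kappa,=)\te\sum_{\alpha<\kappa}P(\kappa)$. Using fact~(B), we pass to a sum over a subset of the index set of size $\sum_n\kappa_n$, which we reindex as $\bigoplus_n\kappa_n$. By fact~(C) it is then enough to have $P(\kappa)\tq P(\kappa_n)$. For this, take $\phi_+((F_k)_k)=(F_k\cap\kappa_n)_k$, replacing it by a fixed element of $([\kappa_n]^{<\omega})^\omega_\infty$ if it lies outside that set. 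This carries cofinal sets to cofinal sets, because an element of $([\kappa_n]^{<\omega})^\omega_\infty$ is also an element of $([\kappa]^{<\omega})^\omega_\infty$.

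\emph{Second part.} Let $\kappa_n\nearrow\kappa$ and let $Y_n$ be the first-part space of size $\kappa_n$. We build $N$ inside a compact metric space $K$ (for instance a copy of $2^\omega$) out of three kinds of pieces.
\begin{enumerate}[noitemsep,topsep=1pt]
\item A locally small core $L$, which is a discrete-in-$K$ (hence open in $N$) union of clopen copies of the spaces $Y_n$. Every point of $L$ has a neighborhood of size $\kappa_n<\kappa$, so $L$ contributes nothing to $M_\infty$.
\item A set $W$ of $\lambda$ points. Each $w\in W$ is a limit of copies of $Y_n$ for infinitely many $n$, arranged so that every annulus of a suitable base at $w$ contains copies of $Y_n$ for infinitely many $n$, and therefore has size $\kappa$.
\item A set $P$ of $\mu$ points. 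Each $p\in P$ is a limit of exactly one copy of each $Y_n$, accumulating in order, so that the $n$-th annulus has size $\kappa_n<\kappa$ while every neighborhood has size $\kappa$.
\end{enumerate}
Then $|N|=\kappa$, since $\lambda<\kappa$ and $\mu\le\omega$; and, granted $M_\infty\subseteq W\cup P$, we get $M_\infty^+=W$ and $M_\infty^-=P$. So $\lambda_N=\lambda$ and $\mu_N=\mu$, and Theorem~\ref{thm:seqTukeyEquivalence} yields exactly the stated type.

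\emph{Main obstacle.} The delicate step is the geometric bookkeeping in the second part. The copies of $Y_n$ must be placed so that the only points of $N$ with all neighborhoods of size $\kappa$ are those of $W\cup P$. In particular, no stray accumulation points of $L$ may be left inside $N$; such points must be avoided or deleted, since they would lie in the compact ambient space $K$ but not in $N$.

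I would first try the following arrangement. Index $W\cup P$ by distinct points of a closed nowhere dense set $C\subseteq 2^\omega$. Choose a countable family of pairwise disjoint clopen sets $V_{k}$ in $2^\omega\setminus C$ converging to $C$, meaning their diameters go to $0$ and their accumulation set is $C$. Inside each $V_k$, place a copy of $Y_{n(k)}$, with the assignment $k\mapsto n(k)$ chosen as follows. Near points of $W$, every annulus should see infinitely many distinct $n(k)$. Near each point of $P$, the values $n(k)$ should increase with proximity and take each value only once nearby. Finally, we remove from $C$ all points not in $W\cup P$. This makes $N$ well defined, because each $V_k$ is open in $2^\omega$ and disjoint from $C$, so every point of $L$ keeps a small neighborhood. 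It then only remains to check the annulus sizes at the points of $W\cup P$.
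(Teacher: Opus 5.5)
The paper quotes this result from its source without proof, so there is no written argument to compare against. Your strategy is the natural one: realize prescribed values of $|N|$, $\lambda_N$, $\mu_N$ and read the type off Theorem~\ref{thm:seqTukeyEquivalence}.

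Your first part is fine. One step should be made explicit: $P(\kappa)\times(\kappa,=)\tq S(\kappa)$ absorbs the summand $S(\kappa)$ because $\mathbf{A}\times(2,=)\tq\mathbf{A}+\mathbf{B}$ whenever $\mathbf{A}\tq\mathbf{B}$, and here $\mathbf{A}\times(2,=)\te\mathbf{A}$.

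\textbf{The gap.} The genuine gap is in the second part, at exactly the step you defer, namely the annulus sizes at $W\cup P$. Whether a kept point of $C$ lands in $N^+_\infty$ or $N^-_\infty$ is governed by its position in $C$, and you index $W\cup P$ by arbitrary points of $C$. Write $Y'_k$ for the copy of $Y_{n(k)}$ in $V_k$. Since every point of $C$ is a limit of infinitely many $V_k$ with diameters tending to $0$, any clopen set meeting $C$ contains infinitely many $Y'_k$.
\begin{itemize}
\item Suppose $p\in P$ is a limit of other points $c$ of $C$, kept or deleted. For any base $(B_m)$ at $p$ and large $m$, the set $B_1\setminus B_m$ contains the trace on $N$ of a clopen neighbourhood of such a $c$. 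Hence it contains infinitely many $Y'_k$ at distance bounded away from $p$.
\item Your two requirements at $p$ then contradict each other: ``$n(k)$ increases with proximity'' fails for these $Y'_k$ unless ``each value is taken once nearby'' is dropped.
\item If $n$ is injective, $B_1\setminus B_m$ (a finite union of annuli) has size $\kappa$, so $p\in N^+_\infty$.
\item Dually, if $w\in W$ is isolated in $C$, then $w$ has a clopen base whose annuli are compact sets missing $C$. Each such annulus meets only finitely many $V_k$ and has size $<\kappa$, so $w\in N^-_\infty$.
\end{itemize}
For instance, with $C$ a Cantor set and $n$ injective you get $\mu_N=0$, whatever $\mu$ is.

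\textbf{The repair.} Choose $C$ deliberately and make $n$ finite-to-one, say injective. Let $C$ be a Cantor set together with $\mu$ points isolated in $C$ (a sequence converging into the Cantor set if $\mu=\omega$). Put $W$ in the Cantor part and $P$ among the isolated points. Then:
\begin{itemize}
\item A clopen set meeting $C$ contains $Y'_k$ for infinitely many $k$, so it has $\kappa$ points of $N$.
\item A clopen set missing $C$ meets only finitely many $V_k$ and no point of $W\cup P$, so it has fewer than $\kappa$ points of $N$.
\end{itemize}
Consequently a kept point of $C$ that is non-isolated in $C$ lies in $N^+_\infty$ (take a base whose annuli all meet $C$). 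A kept isolated point lies in $N^-_\infty$. Points of $L$ are outside $N_\infty$, since each has the neighbourhood $V_k\cap N=Y'_k$ of size $<\kappa$. This gives $\lambda_N=\lambda$ and $\mu_N=\mu$, and Theorem~\ref{thm:seqTukeyEquivalence} finishes the proof.

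Finally, describing $L$ as \emph{discrete in $K$} is wrong as stated, since a discrete family of nonempty sets in a compact space is finite. What you need, and what your $V_k$ provide, is local finiteness in $K\setminus C$.
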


\paragraph{The Countable Case - Countable Spaces}

Let $M$ be a countably infinite, separable metrizable space. In this case the relation $S(|M|)=S(\omega)$ is trivial, because $P(n)$ is trivial for every $n$ in $\omega$. Recall $\cof(P(\omega))=\mathfrak{b}$. 
Further, $M$ partitions into three subsets: (1) $M\setminus M_\infty$, which are the isolated points of $M$, 
(2) $M_\infty^-$, which are the points of $M$ with a neighborhood homeomorphic to the convergent sequence, 
and (3) the remaining points, $M_\infty^+$. 
Putting the data together we compute $\mathop{seq}(M)$ in the countably infinite case.

\begin{thm}\label{th:seq_ctble} Let $M$ be a countably infinite, separable metrizable space. Then
\[
\mathop{seq}(M) = 
\begin{cases}
  0 & \text{ if $M$ is discrete} \\
  n & \text{ if $M$ is the disjoint sum of $n$ $(\le \omega)$ convergent sequences} \\
  \mathfrak{b} & \text{ otherwise.}
\end{cases}
\]    
\end{thm}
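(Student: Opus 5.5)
The plan is to apply Theorem~\ref{thm:seqTukeyEquivalence} with $|M|=\omega$. Since $\omega$ has countable cofinality, we will fix the cofinal sequence $\kappa_n=n$; strictly speaking the definition asks for infinite cardinals, but as in the preceding paragraph we will read $S(\omega)$ as trivial because every $P(n)$ is. (If one insists on infinite $\kappa_n$, no such sequence exists and $S(\omega)$, $Q(\omega)$ must be read via the conventions of \cite{FG_Shape}.) Either way $S(\omega)$ contributes nothing. We will also identify $Q(\omega)$ up to Tukey type with the convergent sequence: with $\kappa_n=n$ it is $\bigl(\prod_n [n]^{<\omega}\bigr)_\infty$ with $i_\infty$, and we will check that a single element, for example $(n)_n$ (all of $n$ at coordinate $n$), is cofinal. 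Indeed any $(F_n)_n$ with infinitely many nonempty $F_n \subseteq n$ meets $(n)_n$ infinitely often. So $\cof(Q(\omega))=1$. Consequently
\[ \mathop{seq}(M)=\cof(P(\omega))\cdot\lambda_M + \mu_M = \mathfrak{b}\cdot\lambda_M+\mu_M, \]
using that the cofinality of a sum is the sum of the cofinalities and the cofinality of a product with $(\lambda,=)$ is $\lambda$ times the cofinality, with empty summands contributing $0$.

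Next we will interpret $\lambda_M$ and $\mu_M$ via the three-part partition stated before the theorem. Since $|M|=\omega$, every $x\notin M_\infty$ has a finite, hence (by Hausdorffness) singleton, open neighbourhood, so $x$ is isolated. A point $x$ of $M_\infty^-$ has a base $(B_n)_n$ with each $D_n$ finite, so $B_0$ is a countable compact set with $x$ as its only accumulation point, i.e.\ a convergent sequence neighbourhood. A point of $M_\infty^+$ is any other non-isolated point. Thus $\lambda_M=0$ exactly when every non-isolated point has a convergent-sequence neighbourhood, and $\mu_M$ counts the non-isolated points.

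We then split into the three cases of the theorem. If $M$ is discrete then $\lambda_M=\mu_M=0$ and $\mathop{seq}(M)=0$. If $\lambda_M=0<\mu_M=n$, we will show $M$ is a disjoint sum of $n$ convergent sequences. Around each of the countably many limit points we choose pairwise disjoint clopen convergent-sequence neighbourhoods, which is possible since $M$ is zero-dimensional, being countable metrizable. We then absorb the remaining isolated points into these sequences, or, if there are infinitely many leftover isolated points and $\mu_M$ is finite, into one of them, enumerating so that the leftover points still converge. The conclusion is that $\mathop{seq}(M)=n$. Conversely, a disjoint sum of $n$ convergent sequences has $\lambda_M=0$ and $\mu_M=n$. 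Otherwise $\lambda_M\ge 1$, giving $\mathfrak{b}\le\mathop{seq}(M)\le\mathfrak{b}\cdot\omega+\omega=\mathfrak{b}$, since $\mathfrak{b}$ is uncountable.

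The main obstacle is the decomposition step. Leftover isolated points may accumulate nowhere inside a neighbourhood, and adding an infinite discrete clopen set to a convergent sequence need not give a convergent sequence. To handle this, I would first try treating the isolated points as a separate discrete summand, reading ``disjoint sum of $n$ convergent sequences'' up to isolated points that do not affect $\mathop{seq}$, because sequences trivially generate at isolated points. Failing that, I would merge them with a sequence via a homeomorphism, noting that the Tukey computation already pins down the value $\mu_M$ regardless.
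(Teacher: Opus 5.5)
Your overall route is the paper's. You apply Theorem~\ref{thm:seqTukeyEquivalence} with $|M|=\omega$ and read $S(\omega)$, $Q(\omega)$ with $\kappa_n=n$; the paper does the same implicitly when it says every $P(n)$ is trivial. You use $\cof(P(\omega))=\mathfrak{b}$ and identify $M\setminus M_\infty$, $M_\infty^-$, $M_\infty^+$ topologically. Your check that $\cof(Q(\omega))=1$ and the formula $\mathop{seq}(M)=\mathfrak{b}\cdot\lambda_M+\mu_M$ spell out what the paper compresses into ``putting the data together''. One small slip: in general $\mu_M$ counts only the non-isolated points that have a convergent-sequence neighbourhood. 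It counts all non-isolated points only when $\lambda_M=0$, which happens to be the only case where you use it.

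\textbf{The gap: the decomposition step.} Suppose $\lambda_M=0$, $n=\mu_M$ is finite, and infinitely many isolated points are left over. They cannot be absorbed, either by re-enumerating or by any homeomorphism, because a disjoint sum of finitely many convergent sequences is compact and such an $M$ is not. So your second fallback is impossible, and the statement as literally written is false. Take $M=C\oplus\omega$ with $C$ a convergent sequence.
\begin{itemize}
\item $\mathop{seq}(M)=1$: a subset of $M$ is non-closed iff its trace on $C$ is, so $C$ alone generates.
\item $M$ is not discrete.
\item $M$ is not a disjoint sum of $n\le\omega$ convergent sequences: finitely many would make it compact, and $\omega$ many would give $\omega$ non-isolated points.
\end{itemize}
So the ``otherwise'' clause would wrongly assign $C\oplus\omega$ the value $\mathfrak{b}$. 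The paper's one-line justification does not address this either.

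\textbf{The repair.} Your first fallback is the right fix, but state it as a correction of the middle case: ``$M$ is the disjoint sum of $n\ge1$ convergent sequences and a discrete space.'' This version is easy to prove when $\lambda_M=0$.
\begin{itemize}
\item Each non-isolated point has an open convergent-sequence neighbourhood, which is compact and hence clopen. Its non-limit points are isolated in $M$.
\item Choose pairwise disjoint such clopen neighbourhoods $C_k$ inductively. Each new non-isolated point lies outside the finitely many closed $C_j$ already chosen.
\item $\bigcup_k C_k$ is open. Its complement consists of isolated points, so it is open too. Hence $M=\bigoplus_k C_k\oplus D$ with $D$ clopen and discrete.
\end{itemize}
For $n=\omega$ you can absorb $D$ one point per $C_k$, which recovers the literal statement in that case. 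For finite $n$ you can absorb $D$ only when $D$ is finite. With this reading, your case analysis giving $0$, $\mu_M$, $\mathfrak{b}$ goes through exactly as you wrote it.
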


\paragraph{The Uncountable Case - Connection to the Sampling Number}

It remains to consider uncountable separable metrizable $M$. The first step is to compute the cofinalities of $P(\kappa)$ and $Q(\kappa)$, for some uncountable cardinal $\kappa$ (soon to be $|M|$), in terms of the sampling number of $\kappa$, $\mathop{sam}(\kappa)$, and the bounding number, $\mathfrak{b}$,
\begin{thm}\label{th:cof_Pkappa}
Let $\kappa$ be an uncountable cardinal, and $(\kappa_n)_n$ a sequence of infinite cardinals increasing (not necessarily strictly) to $\kappa$. 
Then 
\[\cof((\prod_n [\kappa_n]^{<\omega})_\infty,i_\infty)
= \mathop{sam}(\kappa)\cdot \mathfrak{b}. 
\]
Hence $\cof(P(\kappa))= \mathop{sam}(\kappa)\cdot\mathfrak{b}$, and if $\cof(\kappa)=\omega$, then $\cof(Q(\kappa))=\mathop{sam}(\kappa)\cdot\mathfrak{b}$.    
\end{thm}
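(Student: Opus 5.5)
The plan is to prove the two inequalities $\ge \mathop{sam}(\kappa)$ and $\ge \mathfrak{b}$ separately, and then give a single construction for the upper bound $\le \mathop{sam}(\kappa)\cdot\mathfrak{b}$. Write $\mathbf{R}=((\prod_n [\kappa_n]^{<\omega})_\infty,i_\infty)$. Once the equality for $\mathbf{R}$ is proved, both consequences are immediate. Taking $\kappa_n=\kappa$ for all $n$ gives $\mathbf{R}=P(\kappa)$. Taking $(\kappa_n)_n$ strictly increasing, when $\cof(\kappa)=\omega$, gives $\mathbf{R}=Q(\kappa)$.

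\emph{Lower bound $\mathop{sam}(\kappa)$.} Let $\mathcal{C}$ be cofinal in $\mathbf{R}$. I claim $\{\bigcup_n G_n : (G_n)_n\in\mathcal{C}\}$ is cofinal in $([\kappa]^\omega,\ii)$; sets that happen to be finite may be discarded or padded, since they cannot witness $\ii$ anyway.
\begin{itemize}
\item Given a countably infinite $A=\{a_k\}_k\subseteq\kappa$, the $\kappa_n$ increase to $\kappa$, so for each $k$ there is $n_k$ with $a_k\in\kappa_n$ for all $n\ge n_k$. Choose a strictly increasing sequence $m_0<m_1<\cdots$ with $m_k\ge n_k$.
\item Define $F_{m_k}=\{a_k\}$ and $F_n=\emptyset$ otherwise. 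Then $F=(F_n)_n$ lies in $\mathbf{R}$.
\item Some $G\in\mathcal{C}$ satisfies $F\,i_\infty\,G$. Then $a_k\in G_{m_k}$ for infinitely many distinct $k$, so $A\cap\bigcup_n G_n$ is infinite.
\end{itemize}
Hence $|\mathcal{C}|\ge\mathop{sam}(\kappa)$.

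\emph{Lower bound $\mathfrak{b}$.} Since $\omega\subseteq\kappa_0\subseteq\kappa_n$ for every $n$, every $F\in(([\omega]^{<\omega})^\omega)_\infty$ is an element of $\mathbf{R}$. Given $\mathcal{C}$ cofinal in $\mathbf{R}$, form the traces $(G_n\cap\omega)_n$ for $G\in\mathcal{C}$, and discard those with only finitely many nonempty coordinates; such a trace cannot $i_\infty$-dominate anything. Each $F$ over $\omega$ is $i_\infty$-related to some $G\in\mathcal{C}$, and the witnessing intersections lie in $\omega$, so $F$ is related to the trace of $G$. Thus the traces form a cofinal family for $P(\omega)$, and $|\mathcal{C}|\ge\cof(P(\omega))=\mathfrak{b}$ by \cite[Lemma~2.10]{FG_Shape}.

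\emph{Upper bound.} Fix $\mathcal{S}\subseteq[\kappa]^\omega$ cofinal in $([\kappa]^\omega,\subseteq)$ with $|\mathcal{S}|=\mathop{cov}(\kappa)=\mathop{sam}(\kappa)$, using Theorem~\ref{th:cov=sam}. For each $S\in\mathcal{S}$ fix a surjection $e_S:\omega\to S$. Fix $\mathcal{D}$ cofinal in $P(\omega)$ with $|\mathcal{D}|=\mathfrak{b}$. For $S\in\mathcal{S}$ and $D\in\mathcal{D}$, define $G^{S,D}$ by $G^{S,D}_n=e_S[D_n]\cap\kappa_n$; this is a finite subset of $\kappa_n$. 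Keep only those $G^{S,D}$ with infinitely many nonempty coordinates. The resulting family has size at most $\mathop{sam}(\kappa)\cdot\mathfrak{b}$.

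To check cofinality, let $F\in\mathbf{R}$.
\begin{itemize}
\item Pick $S\in\mathcal{S}$ containing the countable set $\bigcup_n F_n$.
\item Put $F'_n=e_S^{-1}[F_n]$. These are finite sets; if necessary first choose $e_S$ finite-to-one, or simply replace $F'_n$ by a single chosen preimage of each point. The $F'_n$ are nonempty infinitely often.
\item Take $D\in\mathcal{D}$ with $F'\,i_\infty\,D$. For infinitely many $n$ there is $j\in F'_n\cap D_n$. Then $e_S(j)\in F_n\subseteq\kappa_n$, and $e_S(j)\in e_S[D_n]$, so $e_S(j)\in G^{S,D}_n$.
\end{itemize}
This shows $F\,i_\infty\,G^{S,D}$, and in particular $G^{S,D}$ has infinitely many nonempty coordinates, so it was kept.

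The argument is essentially routine. The only points needing care are the bookkeeping above: arranging for the singletons $F_{m_k}$ to land in $\kappa_{m_k}$, making the pulled-back sets $F'_n$ finite, and discarding degenerate members of the families.
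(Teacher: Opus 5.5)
Your proof is correct and follows the paper's argument. The lower bound comes from Tukey reductions of $\mathbf{R}$ to $P(\omega)$ and to $([\kappa]^\omega,\ii)$. The upper bound glues, over a $\mathop{sam}(\kappa)$-sized family of countable sets, copies of a $\mathfrak{b}$-sized cofinal family for $P(\omega)$.

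The differences are only bookkeeping. You take the countable sets cofinal for $\subseteq$ (via Theorem~\ref{th:cov=sam}) rather than for $\ii$. This removes the paper's auxiliary size-$\kappa$ family $\mathcal{C}_0$ and its restriction to $S\cap T$. Your spacing $m_k\ge n_k$ and intersection with $\kappa_n$ handle a general sequence $(\kappa_n)_n$ directly. The paper leaves this point implicit in its choice $\psi_-(S)=(\{s_n\})_n$ and in proving the upper bound only for $P(\kappa)$.
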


\begin{proof}
It suffices to show the following pair of inequalities: 

(1)~$\cof(\left(\prod_n [\kappa_n]^{<\omega}\right)_\infty,i_\infty) \ge \mathop{sam}(\kappa)\cdot\mathfrak{b}$, and 
(2)~$\mathop{sam}(\kappa)\cdot\mathfrak{b}  \ge \cof(P(\kappa))$.

\smallskip

For (1), first note that each $\kappa_n \ge \omega$, so $(\left(\prod_n [\kappa_n]^{<\omega}\right)_\infty,i_\infty) \gtq P(\omega) \gte (\omega^\omega,\le_\infty)$, and the last relation has cofinality $\mathfrak{b}$. 
We get (1) by combining this with the fact that  $(\left(\prod_n [\kappa_n]^{<\omega}\right)_\infty,i_\infty) \gtq ([\kappa]^\omega,\ii)$. 
To see the latter, define $\phi_+( (F_n)_n)$ to be $\bigcup_n F_n$ when this set is infinite, and an arbitrary element of $[\kappa]^\omega$ otherwise. 
For any $S$ in $[\kappa]^\omega$, enumerate $S$ as $S=\{s_n : n \in \omega\}$, and set $\psi_-(S)=( \{s_n\})_n$. 
Now if $\psi_-(S) i_\infty (F_n)_n$, then for infinitely many $n$ we have $s_n \in F_n$; it follows $\bigcup_n F_n$ is infinite and meets $S$ in an infinite set, so $S \ii \phi_+((F_n)_n)$, as required.

Now we establish (2). First fix $\mathcal{S}$ cofinal in $([\kappa]^\omega,\ii)$ of size $\mathop{sam}(\kappa)$. 
Take any $S$ in $\mathcal{S}$, and enumerate it as $S=\{s_n : n \in \omega\}$. Observe that the bijection $n \leftrightarrow s_n$ induces a Tukey equivalence between $( ([S]^{<\omega})^{\omega}_{\infty}, i_\infty)$ and $( ([\omega]^{<\omega})^{\omega}_{\infty}, i_\infty)$; and so both have cofinality $\mathfrak{b}$. Fix $\mathcal{C}_S$ cofinal in $( ([S]^{<\omega})^{\omega}_{\infty}, i_\infty)$ of size $\mathfrak{b}$. 

Let $\mathcal{C}= \bigcup \{ \mathcal{C}_S : S \in \mathcal{S}\} \cup \mathcal{C}_0$, where $\mathcal{C}_0 = \{ (F)_n : \emptyset \ne F \in [\kappa]^{<\omega}\}$. 
Note that $\mathcal{C}$ has size $\mathfrak{b} \cdot \cof( [\kappa]^{\omega},\ii)\cdot\kappa$. 
We show $\mathcal{C}$ is cofinal in $P(\kappa)$. Since $\cof(P(\kappa))$ is at least $\kappa$, by Lemma \ref{lem:cofatleastkappa}, 
Claim (2) then follows.

Take any $(G_n)_n$ in $([\kappa]^{<\omega})^{\omega}_{\infty}$. Let $T=\bigcup_n G_n$. If $T$ is finite, then $(T)_n$ is in $\mathcal{C}_0$ and $(G_n)_n i_\infty (T)_n$. 
So suppose $T$ is infinite. Pick $S$ in the cofinal set $\mathcal{S}$ so that $T'=S \cap T$ is infinite. 
For each $n$ set $G'_n=G_n \cap T'$. Since $T'$ is infinite, for infinitely many $n$ we must have $G'_n$ non-empty, so  $(G'_n)_n$ is in $([S]^{<\omega})^{\omega}_{\infty}$.  
As $\mathcal{C}_S$ is cofinal in $(([S]^{<\omega})^{\omega}_{\infty},i_\infty)$, there is a $(F_n)_n$ in $\mathcal{C}_S$ so that $(G'_n)_n i_\infty (F_n)_n$, and so $(G_n)_n i_\infty (F_n)_n$,
as required for $\mathcal{C}$ to be cofinal.
\end{proof}

Combining the previous theorem with the Tukey characterization of $\mathbf{seq}(M)$, Theorem~\ref{thm:seqTukeyEquivalence} above, we compute $\mathop{seq}(M)$, for uncountable separable metrizable $M$, in terms of the /covering number of the size of $M$ and $\mathfrak{b}$. 
\begin{thm}\label{th:seq_sam}
     Let $M$ be an uncountable separable metrizable space. 
\begin{enumerate}[noitemsep,topsep=1pt]
\item[(i)] If $M$ is not locally small then 
   $\mathop{seq}(M)= \mathop{cov}(|M|) \cdot \mathfrak{b}$.
     
\item[(ii)]      If $M$ is locally small then $\mathop{seq}(M)=\lim_{\mu <|M|} \mathop{cov}(\mu)\cdot \mathfrak{b}$.    
\end{enumerate}
   \end{thm}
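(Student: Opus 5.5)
We will read off $\mathop{seq}(M)$ from the Tukey characterization of Theorem~\ref{thm:seqTukeyEquivalence}. There $\mathbf{seq}(M)\gte P(\kappa)\times(\lambda_M,=)+Q(\kappa)\times(\mu_M,=)+S(\kappa)$ with $\kappa=|M|$. The cofinality of a sum is the sum of the cofinalities of its nontrivial summands. Also, for nontrivial $\mathbf{A}$ and $\lambda\ge1$, $\cof(\mathbf{A}\times(\lambda,=))=\cof(\mathbf{A})\cdot\lambda$. Hence
\[\mathop{seq}(M)=\cof(P(\kappa))\cdot\lambda_M+\cof(Q(\kappa))\cdot\mu_M+\cof(S(\kappa)),\]
where a term is omitted when its multiplier is $0$ or the relation is trivial. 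By Theorem~\ref{th:cof_Pkappa} and Theorem~\ref{th:cov=sam}, $\cof(P(\kappa))=\cof(Q(\kappa))=\mathop{cov}(\kappa)\cdot\mathfrak{b}$. The same results give $\cof(S(\kappa))=\sum_n \cof(P(\kappa_n))\cdot\kappa_n=\sum_n\mathop{cov}(\kappa_n)\cdot\mathfrak{b}$, using Lemma~\ref{lem:cofatleastkappa} to absorb the factor $\kappa_n$ into $\cof(P(\kappa_n))$. For $\kappa_n=\omega$ the term is $\mathfrak{b}$, which is consistent since $\mathop{cov}(\omega)=1$.

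For (i), $M$ is not locally small, so $\lambda_M+\mu_M\ge1$ and at least one of the first two terms contributes $\mathop{cov}(\kappa)\cdot\mathfrak{b}$. When $\cof(\kappa)\neq\omega$, $S(\kappa)$ and $Q(\kappa)$ are trivial, and we must have $\lambda_M\ge1$. Indeed $\mu_M$ points need $D_n$ of size $<\kappa$ whose union together with the point is a neighborhood of size $|M|$, which forces $\cof(\kappa)=\omega$. The multipliers $\lambda_M\le\kappa\le\mathop{cov}(\kappa)$ (Lemma~\ref{l:sam1}) and $\mu_M\le\omega$ are absorbed. The $S(\kappa)$ term is at most $\sum_n\mathop{cov}(\kappa_n)\cdot\mathfrak{b}\le\mathop{cov}(\kappa)\cdot\mathfrak{b}$ by Lemma~\ref{lem:samIncreasing}. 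So $\mathop{seq}(M)=\mathop{cov}(|M|)\cdot\mathfrak{b}$.

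For (ii), $\lambda_M=\mu_M=0$ and $\cof(\kappa)=\omega$, so $\mathop{seq}(M)=\cof(S(\kappa))=\sum_n\mathop{cov}(\kappa_n)\cdot\mathfrak{b}$. Since $\mathop{cov}$ is monotone (Lemma~\ref{lem:samIncreasing}) and $(\kappa_n)$ is cofinal in $\kappa$, this countable sum equals $\sup_n\mathop{cov}(\kappa_n)\cdot\mathfrak{b}=\lim_{\mu<\kappa}\mathop{cov}(\mu)\cdot\mathfrak{b}$. The sup is uncountable because $\kappa_n\le\mathop{cov}(\kappa_n)$ and $\kappa$ is uncountable, so the countable sum collapses to it.

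The main obstacle is the bookkeeping for $\cof$ of sums and products. We need $\cof(\sum_\lambda\mathbf{A}_\lambda)=\sum_\lambda\cof(\mathbf{A}_\lambda)$, since a cofinal set in a sum is exactly a union of cofinal sets in the summands. We also need the product formula with $(\lambda,=)$. Finally, we must handle the degenerate case where some summand is trivial or has cofinality $0$, for instance $\kappa_n=\omega$ in $S(\kappa)$, which is harmless. We must also check carefully the claim that $\mu_M>0$ forces $\cof(|M|)=\omega$; if that check fails, we will instead use $\cof(Q(\kappa))=\mathop{cov}(\kappa)\cdot\mathfrak{b}$ directly, which still gives the same value.
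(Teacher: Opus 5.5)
Your proposal is correct and follows the paper's route: you read $\mathop{seq}(M)$ off the Tukey characterization using Theorem~\ref{th:cof_Pkappa}, Theorem~\ref{th:cov=sam} and Lemma~\ref{lem:cofatleastkappa}, and part (ii) is identical to the paper's. The only difference is in (i), where you compute the cofinality of the sum summand by summand instead of using the paper's Tukey sandwich $P(\kappa)\times(\kappa,=)\gtq\mathbf{seq}(M)\gtq(\prod_n[\kappa_n]^{<\omega})_\infty$; in doing so you make explicit, and correctly verify, that $\lambda_M\ge 1$ when $\cof(|M|)\neq\omega$, a point the paper passes over silently. (Your fallback remark would not have worked, since $Q(\kappa)$ is trivial when $\cof(\kappa)\neq\omega$, but your check succeeds, so nothing depends on it.)
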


   \begin{proof}
   Let $\kappa=|M|$.  If it has countable cofinality, then fix a strictly increasing sequence of infinite cardinals, $(\kappa_n)_n$, with limit $\kappa$. Otherwise let $\kappa_n=\kappa$ for all $n$, and note $(\left(\prod_n [\kappa_n]^{<\omega}\right)_\infty,i_\infty)= P(\kappa)$.
From Theorem \ref{thm:seqTukeyEquivalence}, we know $\mathbf{seq}(M)$ is Tukey equivalent to 
$P(\kappa) \times (\lambda_M,=) + Q(\kappa) \times (\mu_M,=) + S(\kappa)$,  
where $\lambda_M \le \kappa$ and $\mu_M \le \omega$. Also recall that when $\kappa$ has uncountable cofinality, $Q(\kappa)$ and $S(\kappa)$ are trivial.

If $\kappa$ has countable cofinality then there are two cases. In the first, $\lambda_M=\mu_M=0$. Again by Theorem \ref{thm:seqTukeyEquivalence}, this occurs when $M$ is locally small, in which case $\mathbf{seq}(M)$ is Tukey equivalent to $S(\kappa)=\sum_n P(\kappa_n) \times (\kappa_n,=)$. Now 
\begin{multline*}
\mathop{seq}(M)=\cof(S(\kappa))=\lim_n \cof(P(\kappa_n))\cdot\kappa_n =\lim_n\cof(P(\kappa_n)) \\
=\lim_n\mathop{sam}(\kappa_n)\cdot\mathfrak{b}=\lim_{\mu <|M|} \mathop{sam}(\mu)\cdot \mathfrak{b};
\end{multline*}
note that the second equality uses the definition of $S(\kappa)$, that the third uses Lemma~\ref{lem:cofatleastkappa}, that the next equality follows from Theorem~\ref{th:cof_Pkappa}, and that the final equality uses the fact that the $\kappa_n$ are cofinal in $\kappa$ and that the $\mathop{sam}$ operator is monotone (Lemma~\ref{lem:samIncreasing}).

If $\kappa$ has countable cofinality but at least one of $\lambda_M, \mu_M \ne 0$, or when $\kappa$ has uncountable cofinality, then $P(\kappa)\times (\kappa,=) \gtq \mathbf{seq}(M) \gtq (\left(\prod_n [\kappa_n]^{<\omega}\right)_\infty,i_\infty)$, and so the claim follows from Theorem~\ref{th:cof_Pkappa}.    
\end{proof}




\paragraph{Computing $\mathop{seq}(M)$}

For $\mathop{seq}(M)$ of countable separable metrizable $M$, see Theorem~\ref{th:seq_ctble}. Here we summarize our understanding of $\mathop{seq}(M)$ for uncountable $M$.
\begin{itemize}[topsep=2pt]
\item We can compute $\mathop{seq}(M)$ from $\mathop{cov}(|M|)$ and $\mathfrak{b}$: 
 $\mathop{seq}(M)=\mathop{cov}(|M|)\cdot\mathfrak{b}$, unless $M$ is locally small when  
$\mathop{seq}(M)=\lim_{\mu <|M|} \mathop{cov}(\mu)\cdot \mathfrak{b}$.

\item All possible values of $\mathop{seq}(M)$ are attained: if $\kappa \le \mathfrak{c}$ is uncountable, then there is an $M(\kappa)$ with  $\mathop{seq}(M(\kappa))=\mathop{cov}(\kappa)\cdot \mathfrak{b}$; if in addition, $\kappa$ has countable cofinality, then there is an $N(\kappa)$ with $\mathop{seq}(N(\kappa))=\lim_{\mu <\kappa} \mathop{cov}(\mu)\cdot \mathfrak{b}$. This follows \textsl{a fortiori} from Theorem~\ref{th:seq_real}. 

\item The sequentiality number is ranked by the size of the space: if $|M| \le |N|$ then $\mathop{seq}(M) \le \mathop{seq}(N)$. Further, if $\kappa \le \mathfrak{c}$ is uncountable, then there is exactly one value of $\mathop{seq}(M)$ for $M$ of size $\kappa$, unless $\cof(\kappa)=\omega$, in which case at least one but possibly two values attained (one for locally small $M$, and a potentially strictly larger value for all other $M$ of size $\kappa$).   
\end{itemize}

To go deeper, observe that two cases arise depending on the relative sizes of $\mathop{cov}(|M|)$ and $\mathfrak{b}$.
 If $\mathop{cov}(|M|) \le \mathfrak{b}$ then $\mathop{seq}(M)=\mathfrak{b}$. 
In particular, if $\mathfrak{b}=\mathfrak{c}$ then $\mathop{seq}(N)=\mathfrak{c}$ for all uncountable separable metrizable $N$. 

Now suppose $\mathfrak{b} \le \mathop{cov}(|M|)$. In this case the following hold.
\begin{itemize}[topsep=2pt]
\item Always $\mathop{seq}(M) \ge |M|$, and   if $|M|=\aleph_n$ or $|M|=\mathfrak{c}$ then $\mathop{seq}(M)=|M|$.

\item If $\cof(|M|)=\omega$ but $M$ is not locally small, then $\mathop{seq}(M) \ge |M|^+$.

\item If $|M|=\aleph_\alpha$ is strictly below the first fixed point of the $\aleph$-function then $\mathop{seq}(M) < \aleph_{|\alpha|^{+4}}$. 

\item It is consistent, assuming the existence of sufficiently large cardinals, that there is a separable metrizable $M$ of size the first fixed point of the $\aleph$-function, with $\mathop{seq}(M)$ as far above $|M|$ as we like. 

\item It is consistent, assuming the existence of a supercompact cardinal, that there is a separable metrizable $M$ with $|M|=\aleph_\omega$ but $\mathop{seq}(M) > |M|^+$. 

\item It is consistent, assuming the existence of a supercompact cardinal, that there is a separable metrizable $M$ with $|M| = \mathfrak{b}$ but $\mathop{seq}(M) > \mathfrak{b}$. 
\end{itemize}

It is an open problem whether large cardinals are necessary for the last two points. Is it true, under some `no large cardinals' hypothesis, that $\mathop{seq}(M)=|M|$ if $\cof(|M|)>\omega$ or $M$ is locally small, and otherwise $\mathop{seq}(M)=|M|^+$?
See Question~\ref{qu:large_card}.

\subsection{How Many Compact Sets Suffice?}\label{ssec:ksam}

\paragraph{The $\mathbf{k}(M)$ Relation}
The \emph{$k$-structure} of a (separable metrizable) space $M$ is the relation $\mathbf{k}(M)=(\mathop{NC}(M),\K(M),\nci)$, where $\mathop{NC}(M)$  are the non-closed subsets of $M$, the general relation $\nci$ is ``not closed in'' and $\K(M)$ is the collection of all compact subsets of $M$.  
Observe  that cofinal sets of $\mathbf{k}(M)$ correspond to generating compact collections, and hence $\mathop{k}(M)=\cof(\mathbf{k}(M))$. Clearly, $\mathbf{seq}(M) \tq \mathbf{k}(M)$, and so $\mathop{seq}(M) \ge \mathop{k}(M)$. 
Additionally, define the \emph{$kc$-structure}  $\mathbf{kc}(M)=(M,\K(M),\in)$, whose  cofinal sets are the compact covers of $M$,  and the \emph{compact-covering number}, $\mathop{kc}(M)=\cof(\mathbf{kc}(M))$, the minimum number of compact sets needed to cover $M$ (see \cite{FG_Shape_CC}). Note $kc(M)\le \omega$ if and only if $M$ is $\sigma$-compact.
From Lemma~3.5 of \cite{FG_Shape} we know that $\mathbf{k}(M) \tq \mathbf{kc}(M)$, and so $\mathop{k}(M) \ge kc(M)$, for every separable metrizable space, $M$, which is not $\sigma$-compact. 

Our results above on $\mathop{seq}(M)$ -  building on the characterization of $\mathbf{seq}(M)$ up to Tukey equivalence obtained in \cite{FG_Shape}, along with what is known about the covering number - tell us the following: that $\mathop{seq}(M)$ is ranked by the size of $M$, if $|M|\le|N|$ then $\mathop{seq}(M)\le\mathop{seq}(N)$, and for a fixed size of space at most two values of $\mathop{seq}(M)$ can occur:  \ for every uncountable $\kappa \le \mathfrak{c}$
\[ \{\mathop{seq}(M) : |M|=\kappa\} \subseteq \left\{\lim_{\mu <|M|} \mathop{cov}(\mu) \cdot\mathfrak{b},\mathop{cov}(|M|)\cdot\mathfrak{b}\right\}.\]

The results of \cite{FG_Shape} give in complete detail the Tukey types of $\mathbf{k}(M)$ for $M$ $\sigma$-compact, from which we deduce the corresponding values of $k(M)$ in the next paragraph. We turn then to the case when $kc(M)$ is uncountable. 
However simple descriptions of the Tukey type of $\mathbf{k}(M)$ are not known for spaces which are not $\sigma$-compact. To the contrary, from \cite{FG_Shape} we know that the structure of Tukey types of $\mathbf{k}(M)$'s is immensely complex. There is a Tukey order anti-chain of size $2^\mathfrak{c}$, and both $\mathfrak{c}^+$ and the unit interval (with the usual order) order embed in $\mathbf{k}(M)$'s under the Tukey order.

Nevertheless we show below that $kc(M)$ plays a role for $k(M)$ loosely analogous to the role that $|M|$ plays for $\mathop{seq}(M)$, with the connection being made by the covering number combined with the bounding number. 
Theorem~\ref{th:k_sam} below implies that:  \ for every uncountable $\kappa \le \mathfrak{c}$
\[\{\mathop{k}(M) : kc(M)=\kappa\} \subseteq [\kappa\cdot\mathfrak{b},\mathop{cov}(\kappa)\cdot\mathfrak{b}].\]

Above we explored, in the context of $\mathop{seq}(M)$, the possible values of $\mathop{cov}(\kappa)$ and the gap between $\kappa$ and $\mathop{cov}(\kappa)$, for $\kappa=|M|$. We do not rehash here the situation when $\kappa=kc(M)$. Instead in the sequel we focus on what is different between  $\mathop{seq}(M)$ and $k(M)$, 
namely (1)  that while $\mathop{seq}(M)$  for $|M|=\kappa$ can only take on the value of one of a lower and an upper bound,  
$\{\mathop{k}(M) : kc(M)=\kappa\}$ is constrained merely to an \emph{interval},  and (2) the lower bound ($\kappa\cdot\mathfrak{b}$) on this interval of $k(M)$ values is potentially smaller than the analogous lower bound value for $\mathop{seq}(M)$ values ($\lim_{\mu <\kappa} \mathop{cov}(\mu)\cdot\mathfrak{b}$). 
Our task is made more difficult because we have no analog to the realization of types theorem, Theorem~\ref{th:seq_real}, for $\mathbf{seq}(M)$. 

Let $\kappa$ be an uncountable cardinal  with $\kappa \le \mathfrak{c}$.  Define the \emph{$k$-spread} at $\kappa$, to be $\sigma_k(\kappa)=|\{k(M) : kc(M)=\kappa\}|$. We show in Theorem~\ref{th:k=sam} that 
the upper bound of the interval, $[\kappa\cdot\mathfrak{b},\mathop{cov}(\kappa)\cdot\mathfrak{b}]$, is always attained; that is, for some $M$ we have $kc(M)=\kappa$ and $k(M)=\mathop{cov}(\kappa)\cdot\mathfrak{b}$, and hence $\sigma_k(\kappa) \ge 1$. 
We then show, Theorem~\ref{th:powers}, that in certain circumstances the lower bound, $\kappa\cdot\mathfrak{b}$, is also attained. It follows that $\sigma_k(\kappa)$ may be at least two. 
Finally we show below that, modulo large cardinals, for every $2 \le K < \omega$ it is consistent that for some $\kappa$ we have 
$\sigma_k(\kappa)=K$ (Theorem~\ref{th:K_many})  and it is consistent that for some $\kappa$ we have 
$\sigma_k(\kappa)\ge \aleph_0$ (Theorem~\ref{th:ctbl_many}).

\paragraph{The Countable Case - $\sigma$-Compact Spaces}

Van Douwen computed $\mathop{k}(M)$ for $\sigma$-compact separable metrizable $M$ (and this data can also be read off from the classification of Tukey types of $\mathbf{k}(M)$ in \cite{FG_Shape} for these $M$). 

\begin{thm}
    Let $M$ be $\sigma$-compact and separable metrizable. Then
\[
\mathop{k}(M) = 
\begin{cases}
  0 & \text{ if $M$ is discrete} \\
  1 & \text{ if $M$ is compact} \\
  \omega & \text{ if $M$ is locally compact but not compact} \\ 
  \mathfrak{b} & \text{ otherwise.}
\end{cases}
\]    
\end{thm}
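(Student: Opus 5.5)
We prove the four cases in turn. The first three are routine; the substance lies in the fourth, namely $\mathfrak{b}\le k(M)\le\mathfrak{b}$ when $M$ is $\sigma$-compact but not locally compact.

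\emph{The trivial cases.} If $M$ is discrete, every subset is closed, so $\mathop{NC}(M)=\emptyset$ and the empty family is cofinal in $\mathbf{k}(M)$; hence $k(M)=0$. Conversely, if $M$ is not discrete, take a non-isolated point $x$. Then $M\setminus\{x\}$ is not closed, so some compact set must witness this, and $k(M)\ge 1$. If $M$ is compact, $\{M\}$ is a generating family, so $k(M)=1$ whenever $M$ is compact and not discrete. (For finite $M$ the space is discrete, which is consistent with the first case.)

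\emph{Locally compact, not compact.} Since $M$ is locally compact, separable and metrizable, it is hemicompact. Fix an exhausting sequence $(K_n)_n$ of compact sets with $K_n\subseteq \mathrm{int}\,K_{n+1}$. Suppose $A\cap K_n$ is closed for every $n$, and let $x\in\cl{A}$. Then $x\in\mathrm{int}\,K_m$ for some $m$, so $x\in\cl{A\cap K_m}=A\cap K_m$. Thus $\{K_n\}$ generates, giving $k(M)\le\omega$. For the lower bound, suppose $\mathcal{F}$ is a finite generating family of compact sets. If $\bigcup\mathcal{F}\ne M$, pick $y\notin \bigcup\mathcal{F}$. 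Since $M$ is not discrete, $M\setminus\{y\}$ or $\{y\}$ yields a problem; the cleaner route is this: $\bigcup\mathcal{F}$ is compact, hence a proper subset of $M$ because $M$ is not compact, hence closed and non-open (otherwise $M$ would be the union of it with a clopen complement, which we handle separately). More simply, the non-closed sets that $\mathcal{F}$ can detect all have limit points inside the compact set $\bigcup\mathcal{F}$. Since $M$ is not compact there is a closed discrete infinite set $D$, and non-discreteness elsewhere lets us find non-closed sets whose witnesses escape any fixed compact set. So finite families fail, and $k(M)=\omega$ (and not $0$ or $1$). I would cite van Douwen here rather than polish these edge cases, noting that the case ``discrete but not compact'' is already covered by $k(M)=0$.

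\emph{The main case.} Now let $M$ be $\sigma$-compact but not locally compact. The lower bound $k(M)\ge\mathfrak{b}$ is van Douwen's theorem quoted in the introduction. Alternatively, pick a point $x$ with no compact neighbourhood and a base $(B_n)_n$ at $x$. Each $\cl{B_n}$ is non-compact, so each contains a closed discrete sequence $(x_{n,m})_m$ avoiding $x$. The subspace $\{x\}\cup\{x_{n,m}\}$ then carries the Tukey type of $P(\omega)\te(\omom,\le_\infty)$, which has cofinality $\mathfrak{b}$. Hence $\mathbf{k}(M)\gtq P(\omega)$ via the map that intersects compact sets with this fan.

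For the upper bound, write $M=\bigcup_n K_n$ with the $K_n$ compact and increasing, and embed $M$ in a compact metric space $\widetilde M$. For $f\in\omom$, define $L_f=\bigcup_n \bigl(K_n\setminus U_{n,f(n)}\bigr)$, where $(U_{n,j})_j$ is a decreasing sequence of neighbourhoods (in $\widetilde M$) of $\widetilde M\setminus M$ chosen with some care. Arranged correctly, each $L_f$ is compact. Given an unbounded family $\mathcal{B}$ of size $\mathfrak{b}$, I would show $\{L_f: f\in\mathcal{B}\}$ generates. Suppose $A$ is not closed, witnessed by a sequence $a_k\to x$ with $a_k\in A$ and $x\notin A$. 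The sequence determines a function $g$ recording the levels $n_k$ with $a_k\in K_{n_k}$ and the depth of $a_k$ in the $U$'s. Then $f\not\le^* g$, i.e.\ $f(n)>g(n)$ infinitely often, should put infinitely many $a_k$ into $L_f$. Since $L_f$ is compact, this forces $x\in L_f$, so $A\cap L_f$ is not closed in $L_f$.

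This step is the expected obstacle: arranging that infinitely many $a_k$ land in $L_f$ using only $\le_\infty$-domination, rather than full $\le^*$-domination, which would yield only $\mathfrak{d}$. The fix I would try first is to work inside the Tukey framework: show directly that $\mathbf{k}(M)\le_T P(\omega)\times(\omega,=)$, reducing to the countably many compact pieces, and then use $\cof(P(\omega))=\mathfrak{b}$ from \cite[Lemma~2.10]{FG_Shape}. Alternatively, I would simply read off the Tukey type of $\mathbf{k}(M)$ for $\sigma$-compact $M$ from the classification in \cite{FG_Shape}, as the paper itself suggests.
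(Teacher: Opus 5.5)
\paragraph{Comparison with the paper.} The paper gives no argument of its own here. It attributes the result to van Douwen and says it can be read off from the classification of Tukey types of $\mathbf{k}(M)$ for $\sigma$-compact $M$ in \cite{FG_Shape}. Your last fallback is therefore the paper's route. The arguments you actually supply, however, have two genuine gaps.

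\paragraph{First gap: the upper bound in the main case.} You have not proved $k(M)\le\mathfrak{b}$ when $M$ is not locally compact.
\begin{itemize}
\item The sets $L_f=\bigcup_n\bigl(K_n\setminus U_{n,f(n)}\bigr)$ are countable unions of compact sets. Nothing stops their closures in $\widetilde M$ from meeting $\widetilde M\setminus M$, so ``arranged correctly, each $L_f$ is compact'' is unsupported.
\item As you note yourself, the generation step would need $\le^*$-domination, which only gives $\mathfrak{d}$.
\end{itemize}
The missing idea is to centre each compact set at the piece of the cover containing the limit point, and to glue on further pieces along a fixed countable base rather than along neighbourhoods of the remainder. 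This is exactly the construction in the proof of Theorem~\ref{th:k_sam}, which nowhere uses that $\kappa$ is uncountable. Take $\kappa=\omega$, a countable compact cover $\{K_\alpha:\alpha\in\omega\}$, and a base $\{B_m\}$ of a metric compactification with $\operatorname{diam}B_m\to 0$. Put
\[\phi_+(\alpha,(F_m)_m)=K_\alpha\cup\bigcup\{K_{F_m}\cap\overline{B_m}: K_\alpha\cap\overline{B_m}\ne\emptyset\}.\]
This set is compact for every $(F_m)_m$, because the glued pieces can accumulate only on $K_\alpha$. Now let $S\to x\in K_\alpha$ be a witnessing sequence, and let $G_{m_n}$ be the finite set recording which cover pieces contain the points of $S$ in the annulus $\overline{B_{m_n}}\setminus B_{m_{n+1}}$. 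Then $\phi_+$ catches infinitely many points of $S$ as soon as $(F_m)_m$ meets the sets $G_{m_n}$ infinitely often. That is an $i_\infty$ condition, not a domination condition. It yields $(\omega,=)\times P(\omega)\ge_T\mathbf{k}(M)$, hence $k(M)\le\omega\cdot\cof(P(\omega))=\mathfrak{b}$. This is the target you named but did not construct. Your fan argument for the lower bound is fine, provided you take $\overline{B_{n+1}}\subseteq B_n$ so that the fan is closed in $M$.

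\paragraph{Second gap: the lower bound in the locally compact case.} Your bound $k(M)\ge\omega$ for locally compact, non-compact $M$ cannot be completed, because it is false as literally stated.
\begin{itemize}
\item For $M=[0,1]\oplus\omega$, the single compact set $[0,1]$ generates. If $A\cap[0,1]$ is closed, then $A$ is that set together with a subset of the clopen discrete set $\omega$, so $A$ is closed. Hence $k(M)=1$.
\item Non-compactness of $M$ only gives a closed discrete set. What the argument needs is a \emph{non-isolated} point outside $\bigcup\mathcal{F}$.
\end{itemize}
For locally compact, non-discrete $M$, the correct dichotomy is in terms of the set $M'$ of non-isolated points.
\begin{itemize}
\item If $M'$ is compact, then a compact neighbourhood of $M'$ generates, so $k(M)=1$.
\item If $M'$ is not compact, let $\mathcal{F}$ be any finite family with compact union $L$. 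Pick $x\in M'\setminus L$ and a nontrivial sequence in the open set $M\setminus L$ converging to $x$. Its set of terms is non-closed and misses $L$, so $\mathcal{F}$ does not generate, and $k(M)=\omega$.
\end{itemize}
So the second and third cases of the statement must be read with ``$M$ compact'' replaced by ``$M'$ compact'', and your proof of those cases should be written for that reading.
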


\paragraph{Another Connection To Sampling}

We now see that for non $\sigma$-compact separable metrizable spaces the $k$-ness number is connected to the compact-covering number via the covering number.

\begin{thm}\label{th:k_sam}
   For every separable metrizable $M$ which is not $\sigma$-compact 
   \[\mathop{cov}(kc(M))\cdot\mathfrak{b} \ge k(M) \ge kc(M)\cdot\mathfrak{b}.\]   
\end{thm}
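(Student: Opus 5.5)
The lower bound is essentially already in hand, so the work is in the upper bound. For the lower bound, Lemma~3.5 of \cite{FG_Shape} gives $\mathbf{k}(M)\tq\mathbf{kc}(M)$, hence $k(M)\ge kc(M)$. Next, $M$ is not $\sigma$-compact, so it is not locally compact: a locally compact separable metrizable space is $\sigma$-compact. Van Douwen's result quoted in the introduction then gives $k(M)\ge\mathfrak{b}$. Since both are infinite cardinals, $k(M)\ge kc(M)\cdot\mathfrak{b}$.

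For the upper bound, let $\kappa=kc(M)$, which is uncountable, and fix a compact cover $\{K_\alpha:\alpha<\kappa\}$ of $M$. Fix $\mathcal{T}\subseteq[\kappa]^\omega$ cofinal in $([\kappa]^\omega,\subseteq)$ with $|\mathcal{T}|=\mathop{cov}(\kappa)$. For each $T\in\mathcal{T}$, the subspace $M_T=\bigcup_{\alpha\in T}K_\alpha$ is $\sigma$-compact and separable metrizable. By the preceding theorem on $\sigma$-compact spaces, $k(M_T)\in\{0,1,\omega,\mathfrak{b}\}$, so $k(M_T)\le\mathfrak{b}$. Fix a generating family $\mathcal{C}_T$ of compact subsets of $M_T$ with $|\mathcal{C}_T|\le\mathfrak{b}$. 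Each member is also compact in $M$. Set $\mathcal{C}=\bigcup_{T\in\mathcal{T}}\mathcal{C}_T$, so that $|\mathcal{C}|\le\mathop{cov}(\kappa)\cdot\mathfrak{b}$. It remains to check that $\mathcal{C}$ is cofinal in $\mathbf{k}(M)$.

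Take $A\subseteq M$ not closed. Since $M$ is metrizable, there are points $a_n\in A$ converging to some $x\notin A$. Choose indices $\alpha_n$ with $a_n\in K_{\alpha_n}$ and $\beta$ with $x\in K_\beta$. The set $S=\{\alpha_n:n\in\omega\}\cup\{\beta\}$ is countable; if it is finite, enlarge it to a countably infinite set. Pick $T\in\mathcal{T}$ with $S\subseteq T$. Then all $a_n$ and $x$ lie in $M_T$, so $A\cap M_T$ is not closed in $M_T$. Because $\mathcal{C}_T$ generates the topology of $M_T$, some $C\in\mathcal{C}_T$ has $(A\cap M_T)\cap C=A\cap C$ not closed in $C$. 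Hence $A\mathrel{\nci}C$ with $C\in\mathcal{C}$, as required. This gives $k(M)\le\mathop{cov}(kc(M))\cdot\mathfrak{b}$.

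I expect no serious obstacle. The one point needing care is the reduction from an arbitrary non-closed set to a single convergent sequence together with its limit. This is what allows countably many compacta from the cover to capture the failure of closedness, and it is exactly where the covering number enters. The rest is bookkeeping: compact subsets of $M_T$ are compact in $M$, and the $\sigma$-compact case yields the bound $\le\mathfrak{b}$ uniformly over all $T$. One could equally use a cofinal family for $([\kappa]^\omega,\ii)$ via Theorem~\ref{th:cov=sam}, but with $\subseteq$ the argument is more direct.
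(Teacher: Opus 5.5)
Your proof is correct, but the upper bound follows a different route from the paper's.

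The paper builds an explicit Tukey morphism $(\kappa,=)\times P(\kappa)\tq\mathbf{k}(M)$. It embeds $M$ in a metrizable compactification with a base whose diameters shrink to $0$. It then defines the compact sets $K_\alpha\cup\bigcup\{K_{F_m}\cap\cl{B_m} : K_\alpha\cap\cl{B_m}\neq\emptyset\}$, which needs a separate compactness check. A non-closed set is coded by the index of the limit point together with the indices of the sequence terms lying in successive annuli $\cl{B_{m_n}}\setminus B_{m_{n+1}}$. The paper finishes with Theorem~\ref{th:cof_Pkappa} and Brian's identity $\mathop{sam}=\mathop{cov}$.

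You instead localize to the $\sigma$-compact pieces $M_T$, with $T$ ranging over a $\subseteq$-cofinal family in $[\kappa]^\omega$, and import van Douwen's bound $k(M_T)\le\mathfrak{b}$. The only topological input is that non-closedness is witnessed by one convergent sequence together with its limit. Your insistence that $\beta\in T$ is exactly what guarantees the limit lies in $M_T$, so that $A\cap M_T$ is not closed in $M_T$. This route is shorter and more elementary: it needs no compactification, no compactness verification, and no appeal to Brian's theorem. It is also the same gluing device the paper itself uses in part (2) of Theorem~\ref{th:cof_Pkappa} and in Theorem~\ref{th:k=sam}.

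What the paper's route buys is a Tukey reduction of $\mathbf{k}(M)$ to the purely combinatorial relation $(\kappa,=)\times P(\kappa)$, which depends only on $\kappa=kc(M)$. This is structural information in keeping with the paper's Tukey-theoretic program. Your argument gives the cardinal bound directly, with the $\mathfrak{b}$ factor outsourced to the cited $\sigma$-compact computation.

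Your lower bound is essentially the paper's. The only difference is that you invoke van Douwen's $\mathfrak{b}\le k(M)$ for non-locally-compact $M$ rather than Theorem~3.11 of \cite{FG_Shape}.
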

\begin{proof} Since $M$ is not $\sigma$-compact we know, by Lemma~3.5 and Theorem~3.11 of \cite{FG_Shape}, $\mathbf{k}(M) \gtq \mathbf{kc}(M)$ and $\mathbf{k}(M) >_T (\omom,\le_\infty)$, so $k(M) \ge kc(M)\cdot\mathfrak{b}$.

Now let $\kappa=kc(M)$, and fix  $\mathcal{K}=\{K_\alpha : \alpha \in \kappa\}$  a compact cover of $M$. We will show that $(\kappa,=) \times (\left( [\kappa]^{<\omega} \right)^{\omega}_{\infty},i_\infty) \tq \mathbf{k}(M)$. 
Then $\mathop{sam}(kc(M))\cdot\mathfrak{b} \ge k(M)$ follows from 
Theorem~\ref{th:cof_Pkappa}.

    For each $x$ in $M$, fix $\alpha(x)$ in $\kappa$ so that $x \in K_{\alpha(x)}$. For any finite subset $F$ of $\kappa$, let $K_F$ be the compact set $K_F:=\bigcup_{\alpha \in F} K_\alpha$. 
Fix a metrizable compactification $\gamma M$ of $M$, with compatible metric $d$, and select a base $\mathcal{B}=\{B_m : m \ge 1\}$ for $\gamma M$ where $B_1=\gamma M$, $\mathop{diam} B_m \to 0$ and for every $x$ in $\gamma M$ the set $\{n : x \in B_n\}$ is infinite.

    Define $\phi_+(\alpha, (F_n)_n)$ to be $K_\alpha \cup \bigcup \{ K_{F_m} \cap \cl{B_m} : m \ge 1 \& K_\alpha \cap \cl{B_m} \ne \emptyset\}$. 
We check  $\phi_+(\alpha,(F_n)_n)$ is compact. 
Take any cover by basic open sets. Some finite subcollection, say $\mathcal{U}_1$, covers the compact set $K_\alpha$. 
As $K_\alpha$ and $\gamma M \setminus \bigcup \mathcal{U}_1$ are compact and disjoint, the distance between them is strictly positive. 
As $\mathop{diam} B_m \to 0$, it follows that,  of the $\cl{B_m}$ that meet $K_\alpha$, all but finitely many  are contained in $\bigcup \mathcal{U}_1$. 
Hence $\phi_+(\alpha,(F_n)_n) \setminus \bigcup  \mathcal{U}_1$ is compact, and can be covered with a finite subcollection, say $\mathcal{U}_2$, from the basic open cover. Now $\mathcal{U}_1 \cup \mathcal{U}_2$ is a finite subcover of $\phi_+(\alpha,(F_n)_n)$.
    
Having defined $\phi_+$, we now work to define $\psi_-$. Let $N$ be in $NC(M)$. 
    Pick $S=(x_n)_n$ an infinite sequence of points from $N$ converging, say, to $x\notin N$. Let $\alpha=\alpha(x)$. Extract a subsequence from the  base giving a local base at $x$, $(B_{m_n})_n$, such that:  $(m_n)_n$ is strictly increasing, $m_1=1$,  $\cl{B_{m_{n+1}}} \subseteq B_{m_n}$ for all $n$, and $S \cap (\cl{B_{m_n}} \setminus B_{m_{n+1}})$ is non-empty for all $n$. Note also that $S \cap (\cl{B_{m_n}} \setminus B_{m_{n+1}})$ is finite, since otherwise $\cl{B_{m_n}} \setminus B_{m_{n+1}}$ would contain $x$, the limit of $S$. 
    Set $G_{m_n}=\{\alpha(x') : x' \in S \cap (\cl{B_{m_n}} \setminus B_{m_{n+1}})\}$, and by the preceding comments,  this set is finite and non-empty. 
    For all other $m$ set $G_m=\emptyset$. Then define $\psi_-(N) = (\alpha,(G_m)_m)$.

    We verify that $(\psi_-,\phi_+)$ are a Tukey morphism pair. 
    Suppose that 
    $
    \psi_-(N) (=\times\ii) (\alpha,(F_m)_m)$. 
    Then $\alpha=\alpha(x)$, so $x$ is in $K_\alpha$; further for infinitely many $m$, $G_m$ meets $F_m$. 
    Since $G_m=\emptyset$ when $m$ is not an $m_n$, it follows, for infinitely many $n$, $G_{m_n}$ meets $F_{m_n}$. Thus, looking at the definition of $\phi_+(\alpha,(F_m)_m)$, and choice of $(B_{m_n})_n$, we see $S \cap \phi_+(\alpha,(F_m)_m)$ is infinite, and so not closed in $\phi_+(\alpha,(F_m)_m)$. \textsl{A fortiori}, $N \cap \phi_+(\alpha,(F_m)_m)$ is not closed in $\phi_+(\alpha,(F_m)_m)$, as required.
\end{proof}

\paragraph{Totally Imperfect - The Upper Bound is Always Attained}

A separable metrizable space $M$ is \emph{totally imperfect} if every compact subset of $M$ is countable, equivalently - since uncountable compact metrizable spaces contain a Cantor set - if $M$ contains no  compact subsets of size continuum. 
Note that if $M$ has size strictly less than the continuum then $M$ is totally imperfect; in particular, locally small spaces are totally imperfect (if $M$ is locally small then $\cof(|M|)=\omega$, but $\cof(\mathfrak{c})\ne\omega$). 
However there are totally imperfect separable metrizable spaces of size continuum, Bernstein subsets of the reals, for example. 
We can determine the $k$-ness number of totally imperfect spaces exactly in terms of $\mathop{cov}(kc(M))$ and the bounding number.

\begin{thm}\label{th:k=sam}
Let $M$ be a non $\sigma$-compact, separable metrizable space.
\begin{enumerate}[noitemsep,topsep=1pt]
\item[(i)] If $M$ is totally imperfect but not locally small then 
$k(M)=\mathop{cov}(kc(M))\cdot\mathfrak{b}$.

\item[(ii)] If $M$ is locally small then 
$
k(M)=\lim_{\mu < kc(M)} \mathop{cov}(\mu)\cdot\mathfrak{b}$. 
\end{enumerate}
\end{thm}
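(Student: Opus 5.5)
The plan is to reduce everything to the sequentiality number, using Theorem~\ref{th:seq_sam}. Two observations do the work: for totally imperfect $M$ we have $kc(M)=|M|$, and for such $M$ we have $k(M)\cdot\mathfrak{b}\ge \mathop{seq}(M)$. Combined with $\mathop{seq}(M)\ge k(M)$ (noted when $\mathbf{k}(M)$ was introduced) and $k(M)\ge\mathfrak{b}$ (Theorem~\ref{th:k_sam}, as $M$ is not $\sigma$-compact), these give $k(M)=k(M)\cdot\mathfrak{b}=\mathop{seq}(M)$. Both formulas in the statement then follow from Theorem~\ref{th:seq_sam} with $|M|$ replaced by $kc(M)$.

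\emph{Step 1: $kc(M)=|M|$.} Covering $M$ by singletons gives $kc(M)\le|M|$. Conversely, every compact subset of $M$ is countable, so a compact cover of size $\kappa$ covers at most $\kappa\cdot\omega$ points. Since $M$ is not $\sigma$-compact, $\kappa$ is uncountable, and so $|M|\le\kappa$. For (ii), note that locally small spaces are totally imperfect, as remarked just before the theorem, so Step~1 applies there as well. Non-$\sigma$-compactness also makes $M$ uncountable, so Theorem~\ref{th:seq_sam} is available.

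\emph{Step 2: $\mathop{seq}(M)\le k(M)\cdot\mathfrak{b}$.} Fix a generating family $\mathcal{K}$ of compact sets of size $k(M)$. Each $K\in\mathcal{K}$ is a countable compact metrizable space, so Theorem~\ref{th:seq_ctble} gives $\mathop{seq}(K)\le\mathfrak{b}$. (If $K$ is finite it is discrete and needs no sequences at all.) For each $K$ choose a family $\mathcal{S}_K$ of at most $\mathfrak{b}$ convergent sequences in $K$ generating the topology of $K$. These sequences, with their limits, are convergent sequences in $M$. Let $\mathcal{S}=\bigcup_{K\in\mathcal{K}}\mathcal{S}_K$; it has size at most $k(M)\cdot\mathfrak{b}$.

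We check that $\mathcal{S}$ generates the topology of $M$ by transitivity of generation. Suppose $A\cap C$ is closed in $C$ for every $C\in\mathcal{S}$. Fix $K\in\mathcal{K}$. For each $C\in\mathcal{S}_K$ we have $(A\cap K)\cap C=A\cap C$, which is closed in $C$. Since $\mathcal{S}_K$ generates $K$, the set $A\cap K$ is closed in $K$. As this holds for every $K\in\mathcal{K}$ and $\mathcal{K}$ generates $M$, the set $A$ is closed in $M$.

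\emph{Step 3: conclusion.} From $\mathfrak{b}\le k(M)\le\mathop{seq}(M)\le k(M)\cdot\mathfrak{b}$ we get $k(M)=\mathop{seq}(M)$.
\begin{itemize}
\item In case (i), $M$ is not locally small, so Theorem~\ref{th:seq_sam}(i) gives $k(M)=\mathop{cov}(|M|)\cdot\mathfrak{b}=\mathop{cov}(kc(M))\cdot\mathfrak{b}$.
\item In case (ii), Theorem~\ref{th:seq_sam}(ii) gives $k(M)=\lim_{\mu<|M|}\mathop{cov}(\mu)\cdot\mathfrak{b}$, and $|M|=kc(M)$ by Step~1.
\end{itemize}

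The only step needing real care is Step~2. There one must check that a countable compact set is generated by at most $\mathfrak{b}$ convergent sequences. This is exactly Theorem~\ref{th:seq_ctble}, together with the trivial finite case.
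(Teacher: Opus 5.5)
Your proof is correct. Part (i) is the paper's argument: $kc(M)=|M|$, then Theorem~\ref{th:seq_sam}(i), then the union of the families $\mathcal{S}_K$ to get $k(M)\cdot\mathfrak{b}\ge\mathop{seq}(M)$. Part (ii), however, takes a genuinely different and shorter route.

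In (ii) the paper does not reuse the $\mathcal{S}_K$ argument. It fixes a countable open cover $\mathcal{U}$ with $|\cl{U}|<|M|$ for each $U\in\mathcal{U}$ and proves the two bounds separately.
\begin{itemize}
\item The lower bound is proved by induction on $|M|$. Given $\mu<|M|$, it picks a closed piece $\cl{U}$ of size above $\mu$. It then applies (i) or the inductive hypothesis to $\cl{U}$, depending on whether $\cl{U}$ is locally small, and uses $k(M)\ge k(\cl{U})$.
\item The upper bound is proved by taking the union of cofinal families for the relations $\mathbf{k}(\cl{U})$ supplied by Theorem~\ref{th:k_sam}.
\end{itemize}

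You instead observe that the argument in (i) never uses ``not locally small''. For every totally imperfect, non-$\sigma$-compact $M$ it gives $\mathfrak{b}\le k(M)\le\mathop{seq}(M)\le k(M)\cdot\mathfrak{b}$, hence $k(M)=\mathop{seq}(M)$. Since locally small spaces are totally imperfect, (ii) then follows directly from Theorem~\ref{th:seq_sam}(ii).

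Your route avoids the induction and yields a cleaner structural fact: on totally imperfect, non-$\sigma$-compact spaces, the $k$-ness number and the sequentiality number coincide. The paper's route for (ii) relies only on the not-locally-small case of Theorem~\ref{th:seq_sam} (through (i)) together with Theorem~\ref{th:k_sam}. So it never needs the Tukey identification $\mathbf{seq}(M)\te S(\kappa)$ or the computation of $\cof(S(\kappa))$ for locally small $M$. Since that case is already proved in the paper, invoking it costs you nothing.
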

\begin{proof}
We begin with item (i). Since $M$ is not $\sigma$-compact, Theorem \ref{th:k_sam} implies that $\mathop{sam}(kc(M))\cdot\mathfrak{b} \ge k(M) \ge \mathfrak{b}$.
As $M$ is totally imperfect, all compact subsets of $M$ are countable, and hence $kc(M)=|M|$ (i.e., we need $|M|$-many compact sets to cover $M$). As $M$ is not locally small, by Theorem~\ref{th:seq_sam}, we have $\mathop{seq}(M)=\mathop{sam}(|M|)\cdot\mathfrak{b}$, which from what we just observed, in turn equals $\mathop{sam}(kc(M))\cdot\mathfrak{b}$. 
So it suffices to show that $k(M)\cdot\mathfrak{b} \ge \mathop{seq}(M)$.

Let $\K$ be a family of compact subsets of $M$ which has size $k(M)$ and is generating. Each $K$ in $\K$ is countable, and so by Theorem~\ref{th:seq_ctble}, $\mathop{seq}(K) \le \mathfrak{b}$. Accordingly, fix $\mathcal{S}_K$ which has size no more than $\mathfrak{b}$ and is cofinal in $\mathbf{seq}(K)$. 
We claim that $\mathcal{S}=\bigcup \{ \mathcal{S}_K : K \in \K\}$, which is a subset of $CS^+(M)$ of size no more than $k(M)\cdot\mathfrak{b}$, is cofinal in $\mathbf{seq}(M)$. Indeed, take any $N$ in $NC(M)$. There is a $K$ in $\K$ so that $N'=N \cap K$ is not closed in $K$. Then there is an $S$ in $\mathcal{S}_K$ with $N' \cap S$ not closed in $S$. Now $S$ is in $\mathcal{S}$ and $N\cap S =N'\cap S$ is not closed in $S$, as required.

\smallskip

Now for (ii). Let $M$ be locally small. Then, by definition of `locally small', the Lindel\"{o}f property and the regularity of $M$, there is a countable open cover, $\mathcal{U}=\mathcal{U}_M$, such that for every $U$ from $\mathcal{U}$, we have $|\cl{U}| < |M|$. Since $\cal{U}$ is countable, we then obtain 
$
|M|=(\sup_{} \{|\cl{U}|: U \in \mathcal{U}\})\cdot\aleph_0=\{|\cl{U}|:U \in \mathcal{U}\}$,
and hence $\cof(|M|)=\omega$. Since $M$ is a separable metrizable space, $|M|\leq\mathfrak{c}$, and as $\mathfrak{c}$ has uncountable cofinality, we conclude that $|M|<\mathfrak{c}$. It then follows that $M$ is totally imperfect; as in the first paragraph, this in turn implies that $kc(M)=|M|$. 
We need to show, for such a space, both of (a)${}_M$  $k(M) \ge \lim_{\mu < |M|} \mathop{sam}(\mu) \cdot\mathfrak{b}$ and (b)${}_M$ $k(M) \le \lim_{\mu < |M|} \mathop{sam}(\mu)\cdot\mathfrak{b}$.

Towards (a)${}_M$, suppose, inductively that $M$ is locally small, and that for every locally small $M'$ of strictly smaller size, (a)${}_{M'}$ holds. 
Take any $\mu < |M|$, and we will show $k(M) \ge \mathop{sam}(\mu)\cdot\mathfrak{b}$. 
Let $\mathcal{U}=\mathcal{U}_M$, as above, and pick $U$ in $\mathcal{U}$ such that $\mu < |\cl{U}| < |M|$. 
Set $M'=\cl{U}$. As $M'$ is a closed subset of $M$, $k(M) \ge k(M')$. 
If $M'$ is not locally small, then by part (i), $k(M') = \mathop{sam}(|M'|)\cdot\mathfrak{b} \ge \mathop{sam}(\mu)\cdot\mathfrak{b}$, as required. 
Otherwise $M'$ is locally small, and then by the inductive hypothesis, $k(M') \ge \lim_{\mu' < |M'|} \mathop{sam}(\mu')\cdot\mathfrak{b} \ge \mathop{sam}(\mu)\cdot\mathfrak{b}$ (as $\mu < |M'|$), again as required for (a)${}_M$.

To complete the proof of (ii), we verify (b)${}_M$. 
Let $\mathcal{U}=\mathcal{U}_M$, as above. For each $U$ from $\mathcal{U}$, $\cl{U}$ is totally imperfect, since $M$ is. Once more arguing as in the first paragraph, we conclude that $kc(\cl{U})=|\cl{U}|$. We thus know by Theorem~\ref{th:k_sam} that $k(\cl{U}) \le \mathop{sam}(|\cl{U}|)\cdot\mathfrak{b}$, so fix $\mathcal{C}_U$ cofinal in $\mathbf{k}(\cl{U})$ of size no more than $\mathop{sam}(|\cl{U}|)\cdot\mathfrak{b}$. 
Set $\mathcal{C}=\bigcup \{ \mathcal{C}_U : U \in \mathcal{U}\}$, and note that $|\mathcal{C}| \le \lim_{\mu < |M|} \mathop{sam}(\mu) \cdot\mathfrak{b}$. We check $\mathcal{C}$ is cofinal in $\mathbf{k}(M)$, and so confirm (b)${}_M$. 
Take any $N$ in $NC(M)$. Pick a sequence $S$ on $N$ converging to a point $x$ outside $N$. Then $x$ is in some $U$ from the cover $\mathcal{U}$, and - dropping finitely many terms if needed - we can suppose $S$ is contained in $U$. Now some member of $\mathcal{C}_U$, and so $\mathcal{C}$,  meets $S$ infinitely often, as required for cofinality.
\end{proof}

It follows from (i) that the top of the range, $\mathop{cov}(\kappa)\cdot\mathfrak{b}$, is always obtained, in \textsc{ZFC}.
\begin{cor}
    If $\omega < \kappa \le \mathfrak{c}$, then there is a separable metrizable space $M_\kappa$ so that $k(M_\kappa)=\mathop{cov}(\kappa)\cdot\mathfrak{b}$.
\end{cor}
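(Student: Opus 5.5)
The plan is to produce a totally imperfect, non-$\sigma$-compact space of size $\kappa$ that is not locally small, and then read off its $k$-ness number from Theorem~\ref{th:k=sam}(i). Part (i) gives $k(M)=\mathop{cov}(kc(M))\cdot\mathfrak{b}$ for such spaces. For a totally imperfect space every compact set is countable, so $kc(M)=|M|$ whenever $|M|$ is uncountable. It therefore suffices to find, for each uncountable $\kappa\le\mathfrak{c}$, a separable metrizable $M_\kappa$ with four properties:
\begin{enumerate}[noitemsep,topsep=1pt]
\item[(a)] $|M_\kappa|=\kappa$,
\item[(b)] $M_\kappa$ is totally imperfect,
\item[(c)] $M_\kappa$ is not locally small,
\item[(d)] $M_\kappa$ is not $\sigma$-compact.
\end{enumerate}
Note that (d) follows from (a) and (b): a $\sigma$-compact totally imperfect space is a countable union of countable sets, hence countable.

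For the construction, I would first handle $\kappa<\mathfrak{c}$. Here any subset of the reals of size $\kappa$ is automatically totally imperfect, since an uncountable compact subset contains a Cantor set of size $\mathfrak{c}$. To defeat local smallness, take $X\subseteq\mathbb{R}$ of size $\kappa$ that is $\kappa$-dense, meaning it meets every nonempty open interval in exactly $\kappa$ points. One way to get this is to choose, for each rational interval, a subset of it of size $\kappa$, and take the union of these countably many sets. Then every point of $X$ has every neighborhood of size $\kappa=|X|$, so $X$ is not locally small.

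For $\kappa=\mathfrak{c}$, I would take a Bernstein set $B\subseteq\mathbb{R}$. It has size $\mathfrak{c}$ and is totally imperfect, because it contains no Cantor set. It is not locally small: every open interval contains a Cantor set, which $B$ meets, so $B$ meets every interval in $\mathfrak{c}$ many points. Alternatively, as the paper notes, local smallness forces $\cof(|M|)=\omega$, which already excludes $|M|=\mathfrak{c}$.

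Applying Theorem~\ref{th:k=sam}(i) to $M_\kappa$ then yields $k(M_\kappa)=\mathop{cov}(\kappa)\cdot\mathfrak{b}$. There is no real obstacle here. The only step needing care is ensuring $M_\kappa$ is not locally small when $\cof(\kappa)=\omega$, and the $\kappa$-dense construction handles that directly.
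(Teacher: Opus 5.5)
Your proposal is correct and follows the paper's argument: apply Theorem~\ref{th:k=sam}(i) to a non-locally-small space of size $\kappa$ when $\kappa<\mathfrak{c}$ and to a Bernstein set when $\kappa=\mathfrak{c}$, where your $\kappa$-dense construction just makes explicit the existence of such a space, which the paper takes for granted. One small slip: meeting a single Cantor set in an interval only gives one point of the Bernstein set, so to get $\mathfrak{c}$ many you need $\mathfrak{c}$ pairwise disjoint Cantor sets in the interval; your alternative argument that $\cof(\mathfrak{c})>\omega$ already settles non-local-smallness.
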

To see this, if $\kappa<\mathfrak{c}$ then let $M_\kappa$ be of size $\kappa$, but not locally small, and otherwise  let $M_\mathfrak{c}$ be totally imperfect of size $\mathfrak{c}$ (a Bernstein set, for example).

Similarly, from (ii) (taking $M_\kappa$ to be locally small of size $\kappa$) we have:
\begin{cor}
    If $\omega < \kappa \le \mathfrak{c}$ and $\cof(\kappa)=\omega$, then there is a separable metrizable space $M_\kappa$ so that $k(M_\kappa)=\lim_{\mu < \kappa} \mathop{cov}(\mu)\cdot\mathfrak{b}$.
\end{cor}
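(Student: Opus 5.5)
The corollary follows from Theorem~\ref{th:k=sam}(ii) once we have a suitable space, so the plan is to build a locally small, non $\sigma$-compact separable metrizable $M_\kappa$ with $kc(M_\kappa)=\kappa$.

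Since $\cof(\kappa)=\omega$, fix a strictly increasing sequence of uncountable cardinals $(\kappa_n)_n$ with supremum $\kappa$. Because $\kappa_n<\kappa\le\mathfrak{c}$, for each $n$ choose a subset $X_n$ of the real line with $|X_n|=\kappa_n$. Let $M_\kappa=\bigoplus_n X_n$ be the topological disjoint sum, realized for instance as $\bigcup_n (\{n\}\times X_n)\subseteq\mathbb{R}^2$. Then $M_\kappa$ is separable metrizable and $|M_\kappa|=\kappa$.

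We then check the hypotheses of Theorem~\ref{th:k=sam}(ii).

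\emph{Local smallness.} A point of $\{n\}\times X_n$ has the open neighbourhood $\{n\}\times X_n$, of size $\kappa_n<\kappa$.

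\emph{Total imperfection.} Since $\kappa<\mathfrak{c}$ (as $\cof(\mathfrak{c})>\omega$ while $\cof(\kappa)=\omega$), every compact subset of $M_\kappa$ is countable. So $kc(M_\kappa)=|M_\kappa|=\kappa$.

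\emph{Not $\sigma$-compact.} As $\kappa$ is uncountable and compact subsets are countable, $M_\kappa$ is not $\sigma$-compact.

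Theorem~\ref{th:k=sam}(ii) now gives $k(M_\kappa)=\lim_{\mu<kc(M_\kappa)}\mathop{cov}(\mu)\cdot\mathfrak{b}=\lim_{\mu<\kappa}\mathop{cov}(\mu)\cdot\mathfrak{b}$, as required.

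The only step needing care is that the space must genuinely be non $\sigma$-compact, so that Theorem~\ref{th:k=sam} applies. This is exactly where we use that $\kappa$ is uncountable and strictly below $\mathfrak{c}$. Alternatively, the space $N(\kappa,0,0)$ of Theorem~\ref{th:seq_real} (with $\lambda=\mu=0$) could be used, since it is locally small by the characterization following Theorem~\ref{thm:seqTukeyEquivalence}; but the explicit disjoint sum avoids any reliance on that construction.
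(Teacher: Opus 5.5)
Your proposal is correct and follows the paper's route: the paper simply takes $M_\kappa$ to be any locally small space of size $\kappa$ and applies Theorem~\ref{th:k=sam}(ii). Your explicit disjoint sum $\bigoplus_n X_n$ and your checks that it is non $\sigma$-compact with $kc(M_\kappa)=\kappa$ fill in details the paper leaves implicit, and which its proof of part (ii) itself re-derives.
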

This may, or may not, place an additional value in $\{k(M) : \mathop{kc}(M)=\kappa\}$ - depending on whether $\lim_{\mu < \kappa} \mathop{cov}(\mu)\cdot\mathfrak{b}$ and $\mathop{cov}(\kappa)\cdot\mathfrak{b}$ are equal.

\paragraph{Powers - Attaining the Lower Bound} 

We next give a particular case when $k(M)$ realizes the lowest possible value, $kc(M)\cdot\mathfrak{b}$. 
\begin{thm}\label{th:powers} 

    Let $N$ be separable metrizable but not compact, and set $M=N^\omega$. Then $kc(M) \ge \mathfrak{d}$ and $k(M)=kc(M)$.     
    Further, if $|N|<\mathfrak{c}$ then $k(M)=kc(M)=\mathop{cov}(|N|)\cdot \mathfrak{d}$. 
\end{thm}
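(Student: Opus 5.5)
The plan is to get the lower bound on $kc(M)$ from a closed copy of $\omom$ inside $M$, and the upper bound on $k(M)$ by coding a convergent sequence in $M$ as a single point of $M^\omega\cong M$, so that one compact cover of $M^\omega$ yields a generating family for $M$.

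\emph{Lower bound $kc(M)\ge\mathfrak{d}$.} Since $N$ is separable metrizable and not compact, it contains a closed, countably infinite, discrete subset $D\cong\omega$. Then $D^\omega\cong\omom$ is closed in $M=N^\omega$. Compact covers restrict to closed subspaces, so $kc(M)\ge kc(\omom)=\mathfrak{d}$.

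\emph{The equality $k(M)=kc(M)$.} $M$ is not $\sigma$-compact, since it contains a closed copy of $\omom$. So Theorem~\ref{th:k_sam} gives $k(M)\ge kc(M)\cdot\mathfrak{b}=kc(M)$, using $\mathfrak{b}\le\mathfrak{d}\le kc(M)$. For the reverse inequality I use $M^\omega\cong N^{\omega\times\omega}\cong M$, so $kc(M^\omega)=kc(M)$.

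Fix a compact cover $\mathcal{K}$ of $M^\omega$ of size $kc(M)$, and a metric $d$ on $M$. For $K\in\mathcal{K}$ let $K_j=\pi_j(K)$, which is compact in $M$, and put
\[ \phi_+(K)=K_0\cup\bigcup_{j\ge1}\{z\in K_j : d(z,K_0)\le 2^{-j}\}. \]
I will check that $\phi_+(K)$ is compact. Take a sequence in it. Either infinitely many of its terms lie in one $K_j$, which is compact, or its distance to $K_0$ tends to $0$. In the latter case compactness of $K_0$ yields a subsequence converging to a point of $K_0$.

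Now let $A$ be non-closed. Pick $x_n\in A$ with $x_n\to x\notin A$, and pass to a subsequence so that $d(x_n,x)\le 2^{-n}$. Code this as the point $y=(x,x_1,x_2,\dots)\in M^\omega$, and choose $K\in\mathcal{K}$ with $y\in K$. Then $x\in K_0$ and $x_n\in K_n$ with $d(x_n,K_0)\le 2^{-n}$, so every $x_n$ lies in $\phi_+(K)$. Hence $A\cap\phi_+(K)$ is not closed in $\phi_+(K)$. So $\{\phi_+(K):K\in\mathcal{K}\}$ is a generating family, and $k(M)\le kc(M)$.

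\emph{The case $|N|<\mathfrak{c}$.} Here $N$ is totally imperfect, so every compact subset of $N$ is countable.

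For the lower bound $kc(M)\ge\mathop{cov}(|N|)$, send a compact $K\subseteq M$ to $\bigcup_i\pi_i(K)$. This is a countable union of countable sets, so it lies in $[N]^{\le\omega}$. Send a countable $S\subseteq N$ to a point of $N^\omega$ enumerating $S$. If that point lies in $K$, then $S\subseteq\bigcup_i\pi_i(K)$. This pair is a Tukey morphism $\mathbf{kc}(M)\tq([|N|]^\omega,\subseteq)$. Combined with the first step, $kc(M)\ge\mathop{cov}(|N|)\cdot\mathfrak{d}$.

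For the upper bound, fix a cofinal family $\mathcal{S}$ in $([N]^\omega,\subseteq)$ of size $\mathop{cov}(|N|)$. Every point of $N^\omega$ has countable range, so it lies in $S^\omega$ for some $S\in\mathcal{S}$. It remains to bound $kc(S^\omega)\le\mathfrak{d}$ for a countable metrizable $S$. A countable subset of a Polish completion is $F_\sigma$, so $S^\omega$ is Borel, hence analytic. For analytic spaces the compact covering number is at most $\mathfrak{d}$, since they are continuous images of $\omom$. This gives $kc(M)\le\mathop{cov}(|N|)\cdot\mathfrak{d}$.

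The step I expect to be the main obstacle is the compactness of $\phi_+(K)$ and the reindexing that makes the coded sequence land inside it. This is the same diameter-shrinking idea as in Theorem~\ref{th:k_sam}, and its conclusion is that for powers no factor of $\mathfrak{b}$ and no covering number of $kc(M)$ is needed.
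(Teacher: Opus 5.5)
Your proposal is correct and follows the paper's proof step for step: the closed copy of $\omom$ gives $kc(M)\ge\mathfrak{d}$, non-$\sigma$-compactness gives $k(M)\ge kc(M)$, the homeomorphism $M^\omega\cong M$ gives $k(M)\le kc(M^\omega)=kc(M)$, and the projection map $K\mapsto\bigcup_i\pi_i(K)$ gives $kc(M)\ge\mathop{cov}(|N|)$. The only differences are in how two sub-steps are justified. You prove $kc(M^\omega)\ge k(M)$ directly with the shrinking-distance compact sets $\phi_+(K)$, whereas the paper cites Theorem~3.6 of \cite{FG_Shape} for $\mathbf{kc}(M)^\omega\tq\mathbf{k}(M)$. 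You bound $kc(S^\omega)\le\mathfrak{d}$ via analyticity, whereas the paper writes down the compact sets $\prod_n\{c_0,\dots,c_{f(n)}\}$ explicitly; this is the same argument, since $S^\omega$ is the continuous image of $\omom$ under $g\mapsto(c_{g(n)})_n$.
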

\begin{proof} 
    Since $N$ is not compact it contains a closed copy of $\omega$. Hence $M$ contains a closed copy of $\omega^\omega$, and so $kc(M) \ge kc(\omega^\omega)=\mathfrak{d}$. 
Again since $N$ is not compact, $M$ is not $\sigma$-compact, so $k(M) \ge kc(M)$. 
    We know from Theorem~3.6 of \cite{FG_Shape} that $\mathbf{kc}(M^\omega)\te\mathbf{kc}(M)^\omega \tq \mathbf{k}(M)$, so $kc(M^\omega) \ge k(M)$. 
    But $M^\omega$ is homeomorphic to $M$, from which it follows that $k(M)=kc(M)$.

    Now suppose $|N|<\mathfrak{c}$, so compact subsets of $N$ are countable and $kc(N)=|N|$. We next show that $kc(M) \ge \mathop{cov}(|N|)$ by verifying $\textbf{kc}(M) \tq ([N]^\omega,\subseteq)$. 
    For $K$ in $\K(N^\omega)$ set $\phi_+(K)$ equal to $\bigcup \{ \pi_n(K) : n \in \omega\}$ if this latter set is infinite (and an arbitrary infinite subset of $N$ otherwise). 
    For $S=\{s_n : n \in \omega\}$ an infinite subset of $N$, set  $\psi_-(S)=(s_n)_n$. Then clearly, if $\psi_-(S) \in K$ then for every $n$, $s_n \in \pi_n(K)$, so $S \subseteq \phi_+(K)$. 

    To complete the argument we show $\mathop{cov}(|N|)\cdot\mathfrak{d} \ge kc(M)$. Let $\mathcal{C}$ be cofinal in $([N]^\omega,\subseteq)$ of size $\mathop{cov}(|N|)$. For each $C$ in $\mathcal{C}$ fix a bijective enumeration, $C=\{c_0,c_1,\ldots, c_n\ldots\}$. 
    Let $\mathcal{F}$ be cofinal in $(\omega^\omega,\le)$ of size $\mathfrak{d}$. 
    For each $C$ in $\mathcal{C}$ and $f$ from $\mathcal{F}$, define $K_{C,f}=\prod_n K_{C,f,n}$ where $K_{C,f,n}=\{c_0,c_1,\ldots,c_{f(n)}\}$. Note $K_{C,f}$ is a compact subset of $M=N^\omega$. 
    We check that the set $\{K_{C,f} : C \in \mathcal{C}, f \in \mathcal{F}\}$, which has size no more than $\mathop{cov}(|N|)\cdot\mathfrak{d}$, covers $M$. 

    Take any point $(x_n)_n$ in $N^\omega$. 
    Pick $C$ in $\mathcal{C}$ so that $\{x_n : n \in \omega\} \subseteq C=\{c_0,c_1,\ldots,c_i,\ldots\}$. For each $n$ let $g(n)$ be such that $x_n=c_{g(n)}$. Pick $f$ in $\mathcal{F}$ so that $g \le f$. Now, for each $n$, $x_n$ is in $K_{C,f,n}$. Hence $(x_n)_n$ is in $K_{C,f}$, as required. 
\end{proof}

A natural question is whether, in the above scenario where $M=N^\omega$, they have the same $k$-ness numbers: if $N$ is non-compact, separable metrizable then is $k(N^\omega)=k(N)$? 

\paragraph{The Number of Values Attained}

Recall that the $k$-spread at an uncountable $\kappa\le \mathfrak{c}$ is $\sigma_k(\kappa)=|\{k(M) : kc(M)=\kappa\}|$, namely, the number of values of $k$ attained for a fixed $kc$. We know from Theorem~\ref{th:k=sam}(i) that we always have $\sigma_k(\kappa) \ge 1$. 
Further, for every $n\in \mathbb{N}$ we know $\mathop{cov}(\aleph_n)=\aleph_n$, so $\sigma_k(\aleph_n)=1$. We now show that, modulo large cardinals, for any finite $K\ge 2$ there exist a model and a $\kappa$ with $\sigma_k(\kappa)=K$, and also a model and a $\kappa$ with $\sigma_k(\kappa)\ge\aleph_0$.

\begin{thm}\label{th:K_many}
Fix an integer $K\ge 2$. 
Let $\lambda=\aleph_{\omega\cdot K}$. 

Modulo large cardinals, it is consistent that $\mathfrak{b}=\aleph_1$, 
$\mathop{cov}(\lambda^+)=\lambda^{+K}$, and  
for $n=1,\ldots,K$ there are separable metrizable spaces, $M_n$, such that $kc(M_n)=\lambda^+$ while $k(M_n)=\lambda^{+n}$.

Hence in this model, $\kappa:=\lambda^+ \le \mathfrak{c}$ is uncountable and
\[\{k(M):kc(M)=\kappa\}=
[\lambda^+,\lambda^{+K}]=
[\kappa\cdot\mathfrak{b},\mathop{cov}(\kappa)\cdot\mathfrak{b}], \ \text{  so }\sigma_k(\kappa)=K.\] 
\end{thm}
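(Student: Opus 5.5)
The plan is to build the model so that the covering numbers of the cardinals $\aleph_{\omega\cdot j}$, for $j=1,\ldots,K$, are stacked at $\lambda^{+j}$. The spaces $M_n$ are then assembled as disjoint sums of spaces whose $kc$ and $k$ we already know.

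\emph{The model.} I want a ground model in which \textsc{CH} holds, the cardinals $\aleph_{\omega\cdot j}$ are strong limits for $1\le j\le K$, and each $\aleph_{\omega\cdot j}$ admits a mod finite scale of length $\lambda^{+j}$ (recall $\lambda=\aleph_{\omega\cdot K}$). I also want $\aleph_{\omega\cdot j}^{\omega}=\lambda^{+j}$. Corollary~\ref{cor:ScalesIncreaseSam} gives $\mathop{cov}(\aleph_{\omega\cdot j})\ge\lambda^{+j}$, and Lemma~\ref{l:sam1}(i) gives $\mathop{cov}(\aleph_{\omega\cdot j})\le\aleph_{\omega\cdot j}^{\omega}=\lambda^{+j}$, so $\mathop{cov}(\aleph_{\omega\cdot j})=\lambda^{+j}$. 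In particular $\mathop{cov}(\lambda)=\lambda^{+K}$, and Lemma~\ref{l:sam1}(iii) gives $\mathop{cov}(\lambda^+)=\mathop{cov}(\lambda)\cdot\lambda^+=\lambda^{+K}$.

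I would obtain this ground model from large cardinals by a Magidor/Gitik style extender-based forcing that blows up $2^{\aleph_{\omega\cdot j}}$ to $\lambda^{+j}$ at each of the $K$ singulars, keeping the scales, in the same spirit as Theorems~\ref{th:small_b_big_sam} and~\ref{th:sam_of_1st_fixpt}. After that I add many random reals. This keeps $\mathfrak{b}=\mathfrak{d}=\aleph_1$, makes $\mathfrak{c}>\lambda^{+K}$, and by Lemma~\ref{lem:cccPreservesCapturing} leaves all the covering numbers above unchanged. I expect this step, getting a simultaneous prescribed failure of \textsc{SCH} at $K$ singulars with the right scales, to be the main obstacle. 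I would try first to cite an existing pcf-realization result (Gitik's, or Magidor's supercompact construction iterated or done simultaneously).

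\emph{The spaces.} Everything needed has size below $\mathfrak{c}$, so all of it is totally imperfect.
\begin{enumerate}[noitemsep,topsep=1pt]
\item Let $N_1$ be a separable metrizable, non-compact space with $|N_1|=\aleph_\omega$, and put $A=N_1^\omega$. By Theorem~\ref{th:powers}, $k(A)=kc(A)=\mathop{cov}(\aleph_\omega)\cdot\mathfrak{d}=\lambda^{+}$.
\item For $2\le n\le K-1$, let $X_n$ be a space of size $\aleph_{\omega\cdot n}$ that is not locally small. By Theorem~\ref{th:k=sam}(i), $k(X_n)=\mathop{cov}(\aleph_{\omega\cdot n})\cdot\mathfrak{b}=\lambda^{+n}$, and $kc(X_n)=\aleph_{\omega\cdot n}<\lambda^+$.
\item For $n=K$, take $X_K$ of size $\lambda^+$ that is not locally small. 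Then $kc(X_K)=\lambda^+$ and $k(X_K)=\mathop{cov}(\lambda^+)=\lambda^{+K}$.
\end{enumerate}
Set $M_1=A$ and $M_n=A\oplus X_n$ for $2\le n\le K$. For a disjoint sum of two spaces, $kc$ is the sum of the two $kc$ values and $k$ is the maximum of the two $k$ values. This holds because compact sets split across the two clopen pieces, and a generating family for the sum restricts to generating families for each piece. Hence $kc(M_n)=\lambda^+$ and $k(M_n)=\lambda^{+n}$ for every $n$.

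\emph{Conclusion.} Theorem~\ref{th:k_sam} gives $\{k(M):kc(M)=\kappa\}\subseteq[\kappa\cdot\mathfrak{b},\mathop{cov}(\kappa)\cdot\mathfrak{b}]=[\lambda^+,\lambda^{+K}]$. This interval contains exactly the $K$ cardinals $\lambda^{+1},\ldots,\lambda^{+K}$, and each of them is attained by some $M_n$. Therefore $\sigma_k(\kappa)=K$.
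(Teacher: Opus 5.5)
Your proposal follows the paper's proof. You get $\mathop{cov}(\aleph_{\omega\cdot n})=\lambda^{+n}$ from a scale as the lower bound and from $\aleph_{\omega\cdot n}^\omega\le 2^{\aleph_{\omega\cdot n}}=\lambda^{+n}$ under \textsc{CH} as the upper bound. Random reals then push $\mathfrak{c}$ up while keeping $\mathfrak{b}=\mathfrak{d}=\aleph_1$ and the covering numbers. Finally, $M_n$ is $N^\omega$ with $|N|=\aleph_\omega$, plus a non-locally-small summand. Two variants differ from the paper and are harmless: your summand of size $\lambda^+$ for $n=K$ (the paper uses one of size $\lambda$), and $M_1=N^\omega$ alone.

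Two corrections are needed.

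First, the step you call the main obstacle is exactly the Gilton--Cummings theorem that the paper cites. Modulo large cardinals, it gives a model of \textsc{CH} in which, for each $1\le n\le K$, $2^{\aleph_{\omega\cdot n}}=\aleph_{\omega\cdot K+n}$ and $\aleph_{\omega\cdot n}$ admits a mod finite scale of length $\aleph_{\omega\cdot K+n}$. The paper does not build this model; it imports it. Your ``Magidor/Gitik style'' sketch, by analogy with the single-cardinal Theorems~\ref{th:small_b_big_sam} and~\ref{th:sam_of_1st_fixpt}, is not a construction, so this step should be discharged by that citation.

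Second, drop the requirement that every $\aleph_{\omega\cdot j}$ be a strong limit. For $j\ge 2$ it contradicts your other requirements, since
\[
2^{\aleph_\omega}\ge\aleph_\omega^\omega\ge\mathop{cov}(\aleph_\omega)\ge\lambda^{+}>\aleph_{\omega\cdot j}.
\]
You never use it, so nothing else in the argument changes.
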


\begin{thm}\label{th:ctbl_many}
Let $\lambda=\aleph_{\omega\cdot\omega}$. 

Modulo large cardinals, it is consistent that $\mathfrak{b}=\aleph_1$, 
$\mathop{cov}(\lambda^+)\ge\lambda^{+\omega}$, and  
for each integer $n\ge 1$ there are separable metrizable spaces, $M_n$, such that $kc(M_n)=\lambda^+$ while $k(M_n)=\lambda^{+n}$.

Hence in this model,  $\kappa:=\lambda^+ \le \mathfrak{c}$ is uncountable and
\[ [\lambda^+,\lambda^{+\omega})
\subseteq  \{k(M):kc(M)=\kappa\} \subseteq 
[\kappa\cdot\mathfrak{b},\mathop{cov}(\kappa)\cdot\mathfrak{b}], \ \text{  so }\sigma_k(\kappa)\ge\aleph_0.\] 
\end{thm}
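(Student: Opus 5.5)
The plan is to realise each $M_n$ as a disjoint sum of two pieces. One piece fixes the compact-covering number at $\lambda^+$ without raising $k$ above $\lambda^+$. The other piece is a small totally imperfect space whose $k$-ness number is pushed up to $\lambda^{+n}$ by a large covering number of a singular cardinal below $\lambda$. For the first piece we use $\omega^\omega$: by Theorem~\ref{th:powers} with $N=\omega$ we have $k(\omega^\omega)=kc(\omega^\omega)=\mathfrak{d}$, so we want a model with $\mathfrak{b}=\aleph_1$ and $\mathfrak{d}=\lambda^+$. For the second piece, let $\mu_m=\aleph_{\omega\cdot m}$ for $m\ge1$; these are singular of countable cofinality and lie below $\lambda$. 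We want a model in which $\mathop{cov}(\mu_m)=\lambda^{+m}$ for every $m\ge1$.

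\textbf{Building the model.} First, from large cardinals, obtain a ground model in which \textsc{CH} holds and $\mathop{cov}(\mu_m)=\lambda^{+m}$ for all $m\ge1$.
\begin{itemize}
\item For the lower bound we arrange, for each $m$, a suitable set $A_m$ for $\mu_m$ carrying a mod finite scale of regular length $\lambda^{+m}$. Corollary~\ref{cor:ScalesIncreaseSam} then gives $\mathop{cov}(\mu_m)\ge\lambda^{+m}$.
\item For the upper bound we arrange $2^{\mu_m}=\lambda^{+m}$ (together with \textsc{GCH}-like behaviour elsewhere). Then Lemma~\ref{l:sam1}(i) gives $\mathop{cov}(\mu_m)\le\mu_m^\omega\le\lambda^{+m}$.
\end{itemize}
This is a Gitik/Magidor-style global failure of \textsc{SCH} at the cardinals $\aleph_{\omega\cdot m}$, with prescribed pcf structure. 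I expect this to be the main obstacle: one must invoke or adapt a forcing construction that controls all countably many singulars simultaneously and produces the required scales. I would first try iterating, or taking a product of, Magidor's construction behind Theorem~\ref{th:small_b_big_sam} across the blocks $[\aleph_{\omega\cdot(m-1)},\aleph_{\omega\cdot m}]$, using supercompacts placed appropriately. Second, force with the c.c.c.\ poset adding $\lambda^+$ Cohen reals. This gives $\mathfrak{b}=\aleph_1$ and $\mathfrak{d}=\mathfrak{c}=\lambda^+$, and by Lemma~\ref{lem:cccPreservesCapturing} all the values $\mathop{cov}(\mu_m)$ are unchanged. By Lemma~\ref{lem:samIncreasing}, $\mathop{cov}(\lambda^+)\ge\sup_m\lambda^{+m}$, and since $\lambda^{+\omega}$ is singular and the covering numbers of Lemma~\ref{l:sam1}(i) cannot equal a singular $\lambda^{+\omega}$ in this configuration (or simply since $\mathop{cov}(\lambda^+)\ge \lambda^{+m}$ for all $m$ and is a cardinal), we get $\mathop{cov}(\lambda^+)\ge\lambda^{+\omega}$.

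\textbf{The spaces.} For $n\ge1$, since $\mu_n<\mathfrak{c}$, pick a separable metrizable $X_n$ of size $\mu_n$ which is not locally small, for example a subset of the reals of size $\mu_n$ containing a point all of whose neighbourhoods have size $\mu_n$. Then $X_n$ is totally imperfect with $kc(X_n)=\mu_n$. By Theorem~\ref{th:k=sam}(i),
\[ k(X_n)=\mathop{cov}(\mu_n)\cdot\mathfrak{b}=\lambda^{+n}. \]
Let $M_n=X_n\oplus\omega^\omega$. A compact cover of a disjoint sum is essentially a union of covers of the summands, and a generating family of compact sets for the sum splits the same way. Hence
\[ kc(M_n)=\max(\mu_n,\lambda^+)=\lambda^+ \quad\text{and}\quad k(M_n)=\max(\lambda^{+n},\lambda^+)=\lambda^{+n}. \]
For $n=1$ one could equally take $M_1=\omega^\omega$.

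\textbf{Conclusion.} The final display follows from Theorem~\ref{th:k_sam}, using $\kappa\cdot\mathfrak{b}=\lambda^+\le\mathfrak{c}$. This gives infinitely many distinct values $\lambda^{+n}$ of $k$ among spaces with $kc=\kappa$, so $\sigma_k(\kappa)\ge\aleph_0$.
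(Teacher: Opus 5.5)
There is a genuine gap in your second forcing step. Adding $\lambda^+$ Cohen reals does not give $\mathfrak{d}=\mathfrak{c}=\lambda^+$ in this setting.

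That $\mathfrak{c}=\lambda^+$ must fail already follows from your own goals. Since $k(X_n)\le\mathop{seq}(X_n)\le\mathfrak{c}$, having $k(X_n)=\lambda^{+n}$ for all $n$ requires $\mathfrak{c}\ge\lambda^{+\omega}$.

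Worse, $\mathfrak{d}$ is pushed up as well: in the Cohen extension $V[G]$ one has $\mathfrak{d}\cdot\lambda^+\ge\mathop{cov}(\lambda^+)$. To see this, let $D$ be dominating, and for each $d\in D$ choose a countable $A_d\in V$ with $d\in V[G\upharpoonright A_d]$. For a countable $B=\{\beta_n:n\in\omega\}\in V$, set $x_B(n)=c_{\beta_n}(n)$, where $c_\beta$ is the Cohen real at coordinate $\beta$. Suppose $x_B\le^* d$ but $\beta_n\notin A_d$ for infinitely many $n$. Then genericity of $G\upharpoonright(\lambda^+\setminus A_d)$ over $V[G\upharpoonright A_d]$ yields infinitely many $n$ with $c_{\beta_n}(n)>d(n)$, a contradiction. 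So $B\subseteq^* A_d$.

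By c.c.c., the family $\{A_d\cup F: d\in D,\ F\in[\lambda^+]^{<\omega}\}$ is then cofinal in $([\lambda^+]^\omega,\subseteq)$. Combined with Lemma~\ref{lem:cccPreservesCapturing} and monotonicity, this gives $\mathfrak{d}\cdot\lambda^+\ge\mathop{cov}(\lambda^+)\ge\lambda^{+\omega}$, so $\mathfrak{d}\ge\lambda^{+\omega}$. Hence $kc(\omega^\omega)=\mathfrak{d}>\lambda^+$. Therefore $kc(M_n)>\lambda^+$ and $k(M_n)\ge\mathfrak{d}>\lambda^{+n}$, and the construction collapses.

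The idea you are missing is that $\mathfrak{d}$ should not be used to pin $kc$ at $\lambda^+$. The covering number of $\mu_1=\aleph_\omega$ does this instead.

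The paper starts from the Gilton--Cummings model: \textsc{CH}, $2^{\aleph_{\omega\cdot n}}=\aleph_{\omega^2+n}$, and scales of length $\aleph_{\omega^2+n}$ at $\aleph_{\omega\cdot n}$. This is precisely the ground model you describe. Your suggestion of iterating Magidor's construction is not an argument, so this model has to be cited.

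The paper then adds random reals rather than Cohen reals. This keeps $\mathfrak{b}=\mathfrak{d}=\aleph_1$ while making $\mathfrak{c}>\lambda^{+\omega}$. The first summand is $N^\omega$ with $|N|=\aleph_\omega$, for which Theorem~\ref{th:powers} gives $k(N^\omega)=kc(N^\omega)=\mathop{cov}(\aleph_\omega)\cdot\mathfrak{d}=\lambda^+$.

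With $\omega^\omega$ replaced by $N^\omega$ and Cohen reals replaced by random reals, your second summand $X_n$ and the rest of your computation coincide with the paper's.
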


In order to prove both these results we need to first construct, with the aid of large cardinals, models with suitable configurations of covering numbers, and then to find within those models the desired spaces, $M_n$. 
For the first step we rely on recent work of the second author and Cummings, \cite{GiltonCummings}.

\begin{thm}[Gilton \& Cummings, \cite{GiltonCummings}] Fix an integer $K \ge 2$. Modulo large cardinals, there is a model in which
(1) the \textsc{CH} holds, (2) for each $1\leq n\leq K$, $2^{\aleph_{\omega\cdot n}}=\aleph_{\omega\cdot K +n}$; and
    (3) for each $1\leq n\leq K$, $\aleph_{\omega\cdot n}$ admits a mod-finite scale of length $\aleph_{\omega\cdot K+n}$.
\end{thm}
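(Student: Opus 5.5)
The plan is to build the model in three stages: a ground model, a Prikry-type forcing that singularizes and blows up powers, and interleaved collapses. First, start from a \textsc{GCH} ground model $V$ containing an increasing sequence $\kappa_1<\kappa_2<\dots<\kappa_K$ of cardinals with enough strength. Concretely, I would ask that each $\kappa_n$ be $\theta_n$-strong with $\theta_n$ well above $\kappa_K$, where $\theta_n$ is the cardinal destined to become $\aleph_{\omega\cdot K+n}$. Alternatively, I would ask for the Mitchell-order hypotheses of Gitik's extender-based theory at each $\kappa_n$.

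Second, force with a product, or better an iteration from the top down, of Gitik--Magidor extender-based Prikry forcings $\mathbb{P}_n$. Each $\mathbb{P}_n$ uses an extender of length $\theta_n$ at $\kappa_n$, interleaved with Lévy collapses between consecutive Prikry points. The intended effect of each factor is as follows.
\begin{itemize}
\item It changes the cofinality of $\kappa_n$ to $\omega$.
\item It adds $\theta_n$ many $\omega$-sequences through $\prod_k \kappa_{n,k}$, where the $\kappa_{n,k}$ are the Prikry points, making $2^{\kappa_n}=\kappa_n^\omega\ge\theta_n$; the upper bound comes from \textsc{GCH} and a counting of nice names.
\item Its generic functions $\langle t_\xi:\xi<\theta_n\rangle$ form a $<^*$-increasing and cofinal sequence in $\prod_k \kappa_{n,k}^{+}$ (or in a suitable product of regular cardinals along the Prikry sequence). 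This is the standard fact that extender-based Prikry forcing produces a scale of length $\theta_n$, and it gives item (3).
\end{itemize}
The collapses are arranged so that afterward $\kappa_n=\aleph_{\omega\cdot n}$ and $\theta_n=\aleph_{\omega\cdot K+n}$. The interval $(\kappa_n,\kappa_{n+1})$ must retain exactly $\omega$ many cardinals, namely $\kappa_n^{+k}$ and the collapsed Prikry-type points. Above $\kappa_K$, the cardinals $\theta_1<\dots<\theta_K$ must become the first $K$ successors of $\aleph_{\omega\cdot K}$.

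Third, verify the cardinal arithmetic.
\begin{itemize}
\item For \textsc{CH}, each component has the Prikry property and its direct-extension order is $\kappa_{n,0}$-closed, with $\kappa_{n,0}>\omega_1$. The bottom collapses are $\omega_1$-closed, so no reals are added and $2^{\aleph_0}=\aleph_1$ persists from \textsc{GCH}.
\item For item (2), the bound $2^{\kappa_n}\le\theta_n$ comes from chain-condition counting, namely the $\kappa_n^{++}$-c.c.\ of the relevant factor together with $\theta_n^{\kappa_n}=\theta_n$ in $V$.
\item I must also check that the later factors $\mathbb{P}_m$, $m\ne n$, neither destroy the scale at $\kappa_n$ nor add subsets of $\kappa_n$. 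For $m>n$ this uses the closure of the direct-extension order of $\mathbb{P}_m$ beyond $\kappa_n$. For $m<n$ it uses that $\mathbb{P}_m$ is small relative to $\kappa_n$'s measures, so by L\'evy--Solovay the extenders at $\kappa_n$ lift.
\end{itemize}
Scales are preserved because cofinality of a $<^*$-increasing sequence is upward absolute once no new functions in the product appear.

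The main obstacle is the interleaving. The targets $\theta_n=\aleph_{\omega\cdot K+n}$ all lie above every $\kappa_m$, so the power of a lower singular $\kappa_n$ must jump over all higher $\kappa_m$ and land exactly at the $n$-th successor of $\aleph_{\omega\cdot K}$. At the same time, the blow-ups at the higher $\kappa_m$ must not collapse or reorder the $\theta$'s. My first attempt is to force top-down: handle $\kappa_K$ first, then $\kappa_{K-1}$ using extenders that were lifted to the extension, and so on. I would check at each step that the $\kappa_n^{++}$-c.c.\ of $\mathbb{P}_n$ preserves the already-created scales of length $\theta_m$, and that the extender length $\theta_n$ remains regular. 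If lifting the long extenders past the earlier Prikry forcings fails, the fallback is a single combined extender-based forcing of Gitik's ``many cardinals'' type, with simultaneous Prikry sequences at all $\kappa_n$.
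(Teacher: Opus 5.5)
\textbf{The proposal cannot be completed, and the obstruction is not in the forcing technology.} As quoted, items (2) and (3) are jointly inconsistent with ZFC. For every infinite $\lambda$ we have $(2^{\lambda})^{\aleph_0}=2^{\lambda}$. Since $K\ge 2$, item (2) for $n=1$ gives $\aleph_{\omega\cdot 2}<\aleph_{\omega\cdot K+1}=2^{\aleph_\omega}$. Hence $\aleph_{\omega\cdot 2}^{\aleph_0}\le (2^{\aleph_\omega})^{\aleph_0}=\aleph_{\omega\cdot K+1}$. On the other hand, a scale of length $\aleph_{\omega\cdot K+2}$ in some $\prod A$, with $A$ countable and $\sup A=\aleph_{\omega\cdot 2}$, consists of $\aleph_{\omega\cdot K+2}$ pairwise distinct functions. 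So $\aleph_{\omega\cdot K+2}\le|\prod A|\le\aleph_{\omega\cdot 2}^{\aleph_0}$, which contradicts item (3) for $n=2$. In general, (2) at $n$ is incompatible with (3) at any $m>n$.

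The paper gives no proof of this statement; it is quoted from Gilton--Cummings, so the quoted form must be a misstatement. Under CH we have $\mathop{cov}(\kappa)=\kappa^{\aleph_0}$ for $\kappa\ge\aleph_1$. The same computation then shows $\mathop{cov}(\aleph_{\omega\cdot n})=\mathop{cov}(\aleph_\omega)$ for all $1\le n\le K$ as soon as $\mathop{cov}(\aleph_\omega)>\aleph_{\omega\cdot K}$.

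\textbf{Where this breaks your outline.} The contradiction surfaces at your upper bound for (2). You count nice names using $\theta_n^{\kappa_n}=\theta_n$ ``in $V$'', but the count must be done in the model over which $\mathbb{P}_n$ is actually forced.
\begin{itemize}
\item In your top-down order, for $n<K$ that model already contains the generic for $\mathbb{P}_K$. There $\kappa_K$ has cofinality $\omega$ and carries a scale of length $\theta_K$, so $\kappa_K^{\aleph_0}\ge\theta_K$. Since $\kappa_K<\theta_n$, you get $\theta_n^{\kappa_n}\ge\theta_K$ there, not $\theta_n$.
\item This is not merely a weak estimate. Once $2^{\kappa_n}\ge\theta_n>\kappa_K$, ZFC gives $2^{\kappa_n}=(2^{\kappa_n})^{\aleph_0}\ge\kappa_K^{\aleph_0}\ge\theta_K$. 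So in any model containing your scales, $2^{\kappa_n}\ge\theta_K$ for every $n$.
\item In the bottom-up or product reading, the count for $\mathbb{P}_n$ alone is fine. But then $2^{\kappa_n}=\theta_n>\kappa_m$ for every $m>n$, which destroys the inaccessibility of $\kappa_m$, so $\mathbb{P}_m$ is no longer available.
\item Your L\'evy--Solovay step does not rescue this. A lower factor $\mathbb{P}_m$ has size at least $\theta_m>\kappa_K$, so it is not small relative to any higher $\kappa_n$.
\end{itemize}
At best, your construction yields scales of length $\theta_n$ at each $\kappa_n$, but with every $2^{\kappa_n}$ (and every $\kappa_n^{\aleph_0}$) pushed up to at least $\theta_K$. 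The target statement has to be corrected before any forcing construction can be designed.
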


\begin{proof}[Proof of  Theorem~\ref{th:K_many}]
Fix $K\ge 2$. 
Start with the model of Gilton and Cummings above. For each $1\leq n\leq K$, from (2), (3) together with Corollary~\ref{cor:ScalesIncreaseSam}   conclude that 
 $\mathop{cov}(\aleph_{\omega\cdot n})$ is equal to $\aleph_{\omega\cdot K+n}$. 
    Now do random real forcing to increase the value of $\mathfrak{c}$ strictly above $\aleph_{\omega\cdot K+K}$, while keeping $\mathfrak{b}$ and $\mathfrak{d}$ fixed at $\omega_1$. By Lemma~\ref{lem:cccPreservesCapturing}, this forcing doesn't change the relevant values of the covering numbers. 
Thus, in the resulting model, (a) $\mathfrak{b}=\mathfrak{d}=\omega_1$;
    (b) $\mathfrak{c}>\aleph_{\omega\cdot K+K}$; and
    (c) for each $1\leq n\leq K$, $\mathop{cov}(\aleph_{\omega\cdot n})=\aleph_{\omega\cdot K+n}$.

Let $\lambda=\aleph_{\omega\cdot K}$. 
Let $N$ be a subset of the reals of size $\aleph_\omega$, and set $M_\infty=N^\omega$. For each $1\le n \le K$, let $N_n$ have size $\aleph_{\omega\cdot n}$  but not be locally small, and set $M_n = M_\infty \oplus N_n$. 

We verify the claims of Theorem~\ref{th:K_many}. Item (a) simply asserts that $\mathfrak{b}=\omega_1$.
From Lemma~\ref{l:sam1}(iii) and from item (c), we see 
\[
\mathop{cov}(\lambda^+)=\mathop{cov}(\aleph_{\omega\cdot K+1})=\mathop{cov}(\aleph_{\omega\cdot K})\cdot \aleph_{\omega\cdot K+1}=\aleph_{\omega\cdot K+K}=\lambda^{+K}.
\]

The compact-covering number of $M_n$, $kc(M_n)$, is the larger of $kc(M_\infty)$ and $kc(N_n)$. 
From Theorem~\ref{th:powers} we know $kc(M_\infty)=\mathop{cov}(|N|)\cdot\mathfrak{d}=\mathop{cov}(\aleph_\omega)\cdot\aleph_1=\aleph_{\omega\cdot K+1}$. 
And since $|N_n|=\aleph_{\omega\cdot n} < \mathfrak{c}$ (by (b)), we have that $N_n$ is totally imperfect (since it has no compact sets of size $\mathfrak{c}$, on account of having size $<\mathfrak{c}$). Hence, 
$kc(N_n)=|N_n|=\aleph_{\omega\cdot n}$. 
Therefore $kc(M_n)=\aleph_{\omega\cdot K+1}=\lambda^+$, as required.

The $k$-ness number of $M_n$, $k(M_n)$, is the larger of $k(M_\infty)$ and $k(N_n)$. 
From Theorem~\ref{th:powers} we know 
$k(M_\infty)=kc(M_\infty)=\aleph_{\omega\cdot K+1}$. 
While from Theorem~\ref{th:k=sam}, as $kc(N_n)=\aleph_{\omega\cdot n}<\mathfrak{c}$ (by (b)), we have $k(N_n)=\mathop{cov}(\aleph_{\omega\cdot n})=\aleph_{\omega\cdot K+n}$. 
Hence $k(M_n)=\aleph_{\omega\cdot K+n}=\lambda^{+n}$, as required.    
\end{proof}

As input for the proof of   Theorem~\ref{th:ctbl_many} we utilize the following.
\begin{thm}[Gilton \& Cummings,\cite{GiltonCummings}] Modulo large cardinals, there is a model in which (1) the \textsc{CH} holds, 
    (2) for each $1\leq n$, $2^{\aleph_{\omega\cdot n}}=\aleph_{\omega^2 +n}$, and
    (3) for each $1\leq n$, 
    $\aleph_{\omega\cdot n}$ admits a mod-finite scale of length $\aleph_{\omega^2+n}$.
\end{thm}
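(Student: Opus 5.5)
The statement just above is a consistency result quoted from Gilton and Cummings. In the present paper its only role is as input for the proof of Theorem~\ref{th:ctbl_many}, so I would not reprove it in full. My plan for a proof is a large-cardinal forcing construction in the style of Gitik--Magidor extender-based Prikry forcing, run at $\omega$ many singular cardinals at once.

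\emph{Ground model.} Start with GCH together with an increasing sequence $\kappa_1<\kappa_2<\cdots$ of cardinals, each carrying a sufficiently long extender (roughly, enough strength to add $\kappa_n^{+\omega+n}$ or so many $\omega$-sequences). Leave enough room between consecutive $\kappa_n$'s for interleaved Lévy collapses.

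\emph{Forcing.} Force with a full-support (or Easton-type) product or iteration of extender-based Prikry forcings with interleaved collapses, one factor for each $\kappa_n$. The $n$-th factor has three jobs:
\begin{enumerate}[noitemsep,topsep=1pt]
\item change the cofinality of $\kappa_n$ to $\omega$;
\item collapse the cardinals in between so that $\kappa_n$ becomes $\aleph_{\omega\cdot n}$ in the final model;
\item add $\aleph_{\omega^2+n}$ many new $\omega$-sequences, so that $2^{\aleph_{\omega\cdot n}}=\aleph_{\omega^2+n}$.
\end{enumerate}
The Prikry condition and the $\kappa_n^{++}$-chain condition of each factor will show that no new bounded subsets are added other than those from the collapses. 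Together with GCH in the ground model, this gives CH and GCH off the cardinals $\aleph_{\omega\cdot n}$, which proves item (1) and, once cardinals are computed, item (2).

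\emph{Scales, item (3).} The extender-based generic gives, for each $\alpha<\aleph_{\omega^2+n}$, a Prikry sequence $t_\alpha$. This $t_\alpha$ is a function on a countable set $A_n$ of regular cardinals cofinal in $\aleph_{\omega\cdot n}$; for instance $A_n$ can be taken to be the successors of the Prikry points, made regular and uncountable by the collapses. The standard density argument shows that the functions $t_\alpha$ are increasing mod finite in $\alpha$. The cofinality of $\langle t_\alpha:\alpha<\aleph_{\omega^2+n}\rangle$ in $(\prod A_n,<^*)$ is established by the usual ``every function in $\prod A_n$ in the extension is bounded by some $t_\alpha$'' argument, using the Prikry property together with the fact that $\prod A_n$-functions have names depending on only $\kappa_n^{+}$-many extender coordinates. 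This yields a scale of length $\aleph_{\omega^2+n}$, so $\aleph_{\omega\cdot n}$ admits such a scale. (As a cross-check, PCF theory gives it abstractly: $\aleph_{\omega\cdot n}^\omega=\aleph_{\omega^2+n}$ under GCH below, and $\aleph_{\omega^2+n}=\max\mathrm{pcf}$ of the regulars below $\aleph_{\omega\cdot n}$ by fact (2) of Proposition~\ref{pr:fixpt}, and the generator for it can be thinned to a countable suitable set. The direct generic scale, however, avoids having to pass from an ideal-modulo scale to a mod-finite one.)

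\emph{Main obstacle.} The hardest step is doing all $n$ simultaneously with the prescribed values. The lengths $\aleph_{\omega^2+n}$ all lie above $\aleph_{\omega^2}=\sup_n\aleph_{\omega\cdot n}$, so the blow-up at stage $n$ must reach past all later $\kappa_m$. The factors therefore interact: later factors must not collapse $\aleph_{\omega^2+n}$ or destroy the earlier scales, and earlier factors must not ruin the large cardinals needed later. My first attempt would be to define the whole thing as a single ``long'' extender-based forcing along the sequence $\langle\kappa_n\rangle$. I would then verify a combined Prikry property and chain condition that preserves cardinals above $\sup_n\kappa_n$, so that each $\aleph_{\omega^2+n}$ is computed correctly and each scale survives the tail forcing.
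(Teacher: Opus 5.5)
The paper gives no proof of this statement: it is imported from Gilton--Cummings and used only as input to Theorem~\ref{th:ctbl_many}, which is also what your first sentence proposes. The sketch you offer in its place has genuine gaps. The first is a cardinal-arithmetic error that contradicts the target. You assert that the Prikry property of each factor means no new bounded subsets except from collapses, and hence CH and GCH off the cardinals $\aleph_{\omega\cdot n}$. But already $2^{\aleph_{\omega+1}}\ge 2^{\aleph_\omega}=\aleph_{\omega^2+1}$.

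More to the point, fix $n\ge 2$ and $\mu=\aleph_{\omega\cdot n}$. Suppose $\nu\mapsto 2^\nu$ were not eventually constant below $\mu$. Then $2^{<\mu}$ would have cofinality $\omega$ and be at least $2^{\aleph_{\omega\cdot(n-1)}}=\aleph_{\omega^2+n-1}$. Since $\aleph_{\omega^2+n-1}$ and $\aleph_{\omega^2+n}$ are regular, it would be strictly above $\aleph_{\omega^2+n}=2^\mu$, which is absurd. So the power function is eventually constant below $\mu$, and by the Bukovsk\'y--Hechler theorem its eventual value is $2^\mu=\aleph_{\omega^2+n}$.

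Thus item (2) for $n\ge2$ is really a statement about \emph{bounded} subsets of $\aleph_{\omega\cdot n}$: the regular cardinals just below it must have $\aleph_{\omega^2+n}$ subsets, more than $\aleph_{\omega\cdot(n-1)}$ has. Nothing in your scheme adds these. Factor $n$ adds no bounded subsets of $\kappa_n$ beyond its collapses, and the lower factors are designed only to make $2^{\kappa_m}=\aleph_{\omega^2+m}$. If, as your bookkeeping intends, $2^\nu$ stays at $\aleph_{\omega^2+n-1}$ on $[\kappa_{n-1},\kappa_n)$, then $\kappa_n^\omega\le 2^{\kappa_n}=2^{<\kappa_n}=\aleph_{\omega^2+n-1}$ in the final model. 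So the $\aleph_{\omega^2+n}$ sequences $t_\alpha$ cannot all be distinct. By Corollary~\ref{cor:ScalesIncreaseSam} and $\mathop{cov}(\kappa_n)\le\kappa_n^\omega$, there is also no scale of length $\aleph_{\omega^2+n}$ at $\kappa_n$. Your pcf cross-check rests on the same false assumption of GCH below $\aleph_{\omega\cdot n}$.

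The second gap is the step you label the main obstacle and then defer; it is the actual content of the Gilton--Cummings theorem, not a routine verification. Since $\aleph_{\omega^2+n}$ of the final model is at least $(\sup_m\kappa_m)^{+n}$ of the ground model, the extender at $\kappa_n$ must be longer than every later $\kappa_m$, not ``$\kappa_n^{+\omega+n}$ or so''. The ground-model gap between $\kappa_n$ and the target value of $2^{\kappa_n}$ is then uncountable, since it contains all later $\kappa_m$. So the Gitik--Magidor collapse scheme, built for a countable gap, does not apply as is: the gap becomes countable only through collapses performed by the \emph{later} factors, and the collapses below $\kappa_n$ would have to mirror them.

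The same point undermines your choice of $A_n$. In the extension by the $\kappa_n$-factor, the product $\prod_k\rho_k^{+}$ (with $\rho_k$ the Prikry points) already carries the scale $\langle t_\alpha:\kappa_n\le\alpha<\kappa_n^{+}\rangle$ of length $\kappa_n^{+}$. The $t_\alpha$ with larger index live in products of larger reflected cardinals. So a scale of length $\aleph_{\omega^2+n}$ must sit on reflections below $\kappa_n$ of $(\sup_m\kappa_m)^{+n}$, and these are regular in the final model only if the collapse patterns are coordinated.

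Finally, the factors cannot be treated one at a time. After the factor at $\kappa_{n-1}$ has made $2^{\kappa_{n-1}}>\kappa_n$, $\kappa_n$ is not even a strong limit. In the other order, the factor at $\kappa_n$ adds cofinal $\omega$-sequences in $\kappa_n$ that no ground-model set of size $\kappa_{n-1}$ covers. The conditions of the factor at $\kappa_{n-1}$ have domains of size $\kappa_{n-1}$ inside an extender of length above $\kappa_n$. So its direct-extension order is no longer countably closed, and the Gitik--Magidor proof of its Prikry property is unavailable.

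What has to be built is a single forcing treating all $\kappa_n$ at once, with a combined Prikry property, chain condition, computation of cardinals and of the power function, and preservation of the scales. The proposal supplies none of this. As it stands it is a research plan rather than a proof; within the paper the statement can only be used, as the authors do, as a cited black box.
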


\begin{proof}[Proof of  Theorem~\ref{th:ctbl_many}]
The argument is similar to that for Theorem~\ref{th:K_many} and we omit some details. Start with the model of Gilton and Cummings above. Applying a suitable random real forcing we get a model where (a) $\mathfrak{b}=\omega_1$, (b) $\mathfrak{c}>\aleph_{\omega^2+\omega}$, 
and
(c) $\mathop{cov}(\aleph_{\omega\cdot n})=\aleph_{\omega^2+n}$, for every $n \ge 1$.

Let $\lambda=\aleph_{\omega\cdot \omega}$. 
Let $N$ be a subset of the reals of size $\aleph_\omega$ and set $M_\infty=N^\omega$. For each $n \ge 1$ let $N_n$ have size $\aleph_{\omega\cdot n}$, but not be locally small, and set $M_n = M_\infty \oplus N_n$.

Then, as above, in this model and with these spaces we have: $\mathfrak{b}=\aleph_1$, $\mathop{sam}(\lambda^+)\ge\aleph_{\omega\cdot\omega+\omega}=\lambda^{+\omega}$, and for every $n$, $kc(M_n)=\aleph_{\omega\cdot\omega+1}=\lambda^+$ while $k(M_n)=\aleph_{\omega\cdot\omega+n}=\lambda^{+n}$, all as required. 
\end{proof}

There is much that we do not know about the $k$-spread and the set $\{k(M) : \mathop{kc}(M)=\kappa\}$. 
We know, from the preceding result, it is consistent  that for $\kappa=\aleph_{\omega^2+1}$ the $k$-spread can be at least $\aleph_0$. 
Can the $k$-spread of some uncountable $\kappa \le \mathfrak{c}$ be uncountable? 
Can $\{k(M) : \mathop{kc}(M)=\kappa\}$  be a proper subset of $[\kappa\cdot\mathfrak{b},\mathop{cov}(\kappa)\cdot\mathfrak{b}]$?


\paragraph{Van Douwen's Question}
In van Douwen's `Handbook' article \cite{vanDou} on small cardinals and topology he  asks the following: 
\begin{quote} Question 8.11: \ 
if $M$ is separable metrizable then is \\ \phantom{x}  $\cof(\K(M),\subseteq)=k(M)=\mathfrak{d}$ when $M$ is analytic, or at least Borel? 
\end{quote}
Evidently it is intended that $M$ not be $\sigma$-compact. 
Unasked by van Douwen, but also natural and considered in the literature, is the case when $M$ is co-analytic.

The answer is known for $\cof(\K(M),\subseteq)$. Under \textsc{CH} for any analytic $M$ we have $\cof(\K(M),\subseteq)=\mathfrak{d}=\aleph_1$. While Becker \cite{Becker} has constructed a model in which there is an analytic set $M$ with $\cof(\K(M),\subseteq)>\mathfrak{d}$. 
Van Engelen \cite{vanEng} showed that if $M$ is co-analytic but not locally compact then $\cof(\K(M),\subseteq) = \mathfrak{d}$. This also follows from earlier work of Fremlin, \cite{Fremlin}. 

It seems (see Vaughan \cite{Vau} and van Mill \cite{vanMill}) it was thought that these results completely answer  van Douwen's question. While that is true of $\cof(\K(M),\subseteq)$, it is not the case for $k(M)$. In fact our final theorem below shows that the situation with $k(M)$ is symmetrically opposite. \emph{In \textsc{ZFC}} the $k$-ness number of analytic, non $\sigma$-compact spaces is $\mathfrak{d}$. Hence for Becker's analytic $M$ we have $k(M)=\mathfrak{d}<\cof(\K(M),\subseteq)$. While it is consistent for there to be a co-analytic, non $\sigma$-compact space, $M$ say, whose $k$-ness number is strictly less than $\mathfrak{d}$. Combining this with the van Engelen/Fremlin result, for this co-analytic $M$ we have $k(M) <\mathfrak{d}=\cof(\K(M),\subseteq)$.

\begin{thm}\label{th:VDsoln} \ 
\begin{enumerate}[noitemsep,topsep=1pt]
\item[(i)] If $M$ is analytic but not $\sigma$-compact, then  $k(M)=\mathfrak{d}$. 

\item[(ii)] It is consistent there is a co-analytic, non $\sigma$-compact $M$  with $\mathop{k}(M) < \mathfrak{d}$.
\end{enumerate}
\end{thm}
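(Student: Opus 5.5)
For (i) we prove the two inequalities $k(M)\ge\mathfrak{d}$ and $k(M)\le\mathfrak{d}$. For the lower bound, a classical theorem of Hurewicz says that an analytic space which is not $\sigma$-compact contains a closed copy of $\omega^\omega$, that is, of the irrationals. Since a generating compact family for $M$ restricts to a generating compact family for any closed subspace, $k(M)\ge k(\omega^\omega)$. By Theorem~\ref{th:powers}, with $N=\omega$ (separable metrizable and not compact), we get $k(\omega^\omega)=kc(\omega^\omega)\ge\mathfrak{d}$. Hence $k(M)\ge\mathfrak{d}$.

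For the upper bound, $M$ is a continuous image of $\omega^\omega$; fix a continuous surjection $f:\omega^\omega\to M$. We want $\mathfrak{d}$ compact sets generating $M$, and the natural candidates are images $f(K)$ with $K$ ranging over a generating family of compact subsets of $\omega^\omega$, of size $k(\omega^\omega)=\mathfrak{d}$. The problem is that images of generating families under arbitrary continuous maps need not generate, since $f$ need not be a quotient map.

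I would avoid this by working with convergent sequences directly. Given a non-closed $N\subseteq M$, take a sequence $(x_n)$ in $N$ converging to some $x\notin N$. The task is to capture infinitely many of the $x_n$ together with $x$ inside a single member of the family. Choose preimages $y_n$ of the $x_n$ and $y$ of $x$. The set $L=\{x\}\cup\{x_n:n\in\omega\}$ is compact, so it is natural to look at $f^{-1}(L)$; this set is closed but need not be compact. Instead I would use the fact that $\mathcal{K}(M)$ itself has cofinality $\mathfrak{d}$ in the required sense. Here the key tool is the theorem of Christensen (also Fremlin) that for analytic $M$ there is a monotone map $\omega^\omega\to\mathcal{K}(M)$ with cofinal image, so that $(\omega^\omega,\le)\ge_T(\mathcal{K}(M),\subseteq)$. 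That would give $\cof(\mathcal{K}(M))\le\mathfrak{d}$, but Becker's model shows this fails for some analytic spaces. So the honest route is a direct Tukey morphism $(\omega^\omega,\le)\times(\omega,=)\ge_T\mathbf{k}(M)$, obtained by sending a pair $(g,m)$ to $f(\{z:z\le g\}\cap[s_m])$, where $(s_m)$ enumerates the finite sequences. The point is that we only need to catch a subsequence of $(x_n)$ near $x$, not the whole of a compact set. Choose $y\in f^{-1}(x)$ and, using continuity of $f$ at $y$ together with the sequence, pick preimages $y_n$ lying in shrinking basic neighbourhoods around $y$ if possible. This is not always possible, which is exactly the obstacle. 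To handle it I would pass to a countable basis and use the stronger Hurewicz dichotomy for analytic sets: either $x$ has a neighbourhood that is $\sigma$-compact, or we can find the needed preimage sequences. I expect this step, constructing $\psi_-$ so that the preimages converge in $\omega^\omega$, to be the main difficulty. If it cannot be done in general, the fallback is the Saint Raymond–Christensen characterization, by which analytic spaces are ``$k$-determined by $\omega^\omega$''.

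For (ii), assume $V=L$, or more simply $\omega_1=\omega_1^L$ together with $\mathfrak{d}>\aleph_1$, for example after adding Cohen reals to $L$. In $L$ there is a co-analytic ($\Pi^1_1$) set of reals of size $\aleph_1$ that remains of size $\aleph_1$ after ccc forcing; the standard choice is a thin co-analytic set of constructible reals which is not $\sigma$-compact and contains no perfect set. Then $M$ is totally imperfect and $kc(M)=|M|=\aleph_1$. Arrange that $M$ is not locally small (for instance by homogeneity, or by taking a sum with a copy of itself around a point). Theorem~\ref{th:k=sam}(i) then gives $k(M)=\mathop{cov}(\aleph_1)\cdot\mathfrak{b}=\aleph_1\cdot\mathfrak{b}$. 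Adding $\aleph_2$ Cohen reals keeps $\mathfrak{b}=\aleph_1$ while making $\mathfrak{d}=\aleph_2$. The $\Pi^1_1$ definition is reinterpreted in the extension, so the main check is that the reinterpreted set is still of size $\aleph_1$, with no perfect subset and not $\sigma$-compact. This holds for the standard thin $\Pi^1_1$ set via Mansfield–Solovay. We conclude $k(M)=\aleph_1<\aleph_2=\mathfrak{d}$.
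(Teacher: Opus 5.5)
\textbf{The gap: the upper bound $k(M)\le\mathfrak{d}$ in (i) is never proved.} You rightly discard the Christensen route, since a monotone cofinal map $\omega^\omega\to\K(M)$ exists only for Polish $M$. You then propose $(g,m)\mapsto f(\{z:z\le g\}\cap[s_m])$. This is a Tukey map only if every convergent sequence $x_n\to x$ in $M$ has infinitely many terms inside $f(C)$ for a single compact $C\subseteq\omega^\omega$. For a given continuous surjection $f$ that fails: if $f^{-1}(x_n)\subseteq[\langle n\rangle]$ for every $n$, each compact $C$ meets only finitely many of these cones. The remedies you list next (a Hurewicz-type dichotomy producing convergent lifts, a ``Saint Raymond--Christensen characterization'') are neither stated precisely nor carried out. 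So this half of (i) rests on nothing. The paper itself simply quotes $\mathbf{k}(M)=_T\omega^\omega$ from Theorem~3.9 of \cite{FG_Shape}.

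\textbf{The missing idea: no lifting of sequences is needed.} The gluing trick in the proof of Theorem~\ref{th:k_sam} works for any monotone compact cover.
\begin{itemize}
\item Let $K_g=f(\{z:z\le g\})$, so $g\le g'$ implies $K_g\subseteq K_{g'}$. Fix an order isomorphism $g\mapsto(g_m)_{m\in\omega}$ of $(\omega^\omega,\le)$ onto $((\omega^\omega)^\omega,\le)$.
\item With $\gamma M$ and $(B_m)_{m\ge1}$ as in that proof, put $\phi_+(g)=K_{g_0}\cup\bigcup\{K_{g_m}\cap\cl{B_m}:m\ge1,\ K_{g_0}\cap\cl{B_m}\ne\emptyset\}$. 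This is compact by the same argument.
\item Given $N\in NC(M)$, take a sequence $S\subseteq N$ converging to $x\notin N$ and the local base $(B_{m_n})_n$ at $x$.
\item Pick $h_0$ with $x\in K_{h_0}$. Pick $h_{m_n}$ (a pointwise maximum of preimages) with the finite set $S\cap(\cl{B_{m_n}}\setminus B_{m_{n+1}})$ inside $K_{h_{m_n}}$; the other $h_m$ are arbitrary. Let $\psi_-(N)$ code $(h_m)_m$.
\item If $\psi_-(N)\le g$, then $x\in K_{g_0}$, every $\cl{B_{m_n}}$ meets $K_{g_0}$, and so $S\cup\{x\}\subseteq\phi_+(g)$. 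Hence $N\cap\phi_+(g)$ is not closed in $\phi_+(g)$.
\end{itemize}
Thus $(\omega^\omega,\le)\ge_T\mathbf{k}(M)$ and $k(M)\le\mathfrak{d}$. Equivalently, combine $\omega^\omega\ge_T\mathbf{kc}(M)$ with $\mathbf{kc}(M)^\omega\ge_T\mathbf{k}(M)$ (Theorem~3.6 of \cite{FG_Shape}, used in the proof of Theorem~\ref{th:powers}).

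\textbf{The rest is fine.} Your Hurewicz-based lower bound is a valid alternative to the citation. Part (ii) works, in a different model from the paper's:
\begin{itemize}
\item You take a thin $\Pi^1_1$ set of constructible reals after adding $\aleph_2$ Cohen reals to $L$, which gives $k(M)=\aleph_1=\mathfrak{b}<\mathfrak{d}$.
\item The paper takes an arbitrary $\aleph_1$-sized set, co-analytic because $\aleph_1<\mathfrak{p}$, in a model of $\mathfrak{b}<\mathfrak{d}$.
\end{itemize}
Two small points on (ii). Non-local-smallness is automatic, since $\cof(\aleph_1)\ne\omega$. Your alternative hypothesis ``$\omega_1=\omega_1^L$ and $\mathfrak{d}>\aleph_1$'' would not suffice by itself: you need $\mathfrak{b}<\mathfrak{d}$, which the Cohen model provides.
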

\begin{proof} 
For the first part, from Theorem~3.9 of \cite{FG_Shape}, we know that for analytic non $\sigma$-compact $M$ we have 
$\mathbf{k}(M) \te \omega^\omega$, and so, indeed, $k(M)=\cof(\omega^\omega)=\mathfrak{d}$.

For the second claim we work in a model where $\aleph_1 < \mathfrak{p} \le \mathfrak{b} < \mathfrak{d}$,  and note that this is consistent.  Take any $\aleph_1$-sized subset, $M$ say, of the reals. 
Then $M$ is co-analytic because $|M|=\aleph_1 < \mathfrak{p}$. But $M$ is not $\sigma$-compact, in fact $\mathop{kc}(M)=\aleph_1$, because $|M|=\aleph_1<\mathfrak{c}$. Hence $\mathop{cov}(kc(M))\cdot \mathfrak{b}=\aleph_1\cdot\mathfrak{b}=\mathfrak{b}$ (Lemma~\ref{l:sam_aleph_n}), so  $\mathop{k}(M)=\mathfrak{b}$ (Theorem~\ref{th:k_sam}) and $\mathfrak{b} <\mathfrak{d}$ in our model. 
\end{proof}

\section{Acknowledgements} Thanks to Will Brian for his interest in this work, and for informing us \cite{Brian} about the equivalence of the sampling and covering numbers.

\end{document}